\documentclass[a4paper, 10pt, twoside, notitlepage]{amsart}

\usepackage{amsmath,amscd}
\usepackage{amssymb}
\usepackage{amsthm}
\usepackage{comment}
\usepackage{graphicx, xcolor}

\usepackage{mathrsfs}
\usepackage[ocgcolorlinks, linkcolor=blue]{hyperref}

\usepackage{bm}
\usepackage{bbm}
\usepackage{url}

\usepackage[utf8]{inputenc}
\usepackage{mathtools,amssymb}
\usepackage{esint}
\usepackage{tikz}
\usepackage{dsfont}
\usepackage{relsize}
\usepackage{url}
\urlstyle{same}
\usepackage{xcolor}
\usepackage{graphicx}
\usepackage{mathrsfs}
\usepackage[shortlabels]{enumitem}
\usepackage{lineno}
\usepackage{amsmath}
\usepackage{enumitem}
\usepackage{amsthm} 
\usepackage{verbatim}
\usepackage{dsfont}
\numberwithin{equation}{section}

\allowdisplaybreaks

\mathtoolsset{showonlyrefs}

\graphicspath{{images/}}

\newtheorem{theorem}{Theorem}[section]
\newtheorem{lemma}[theorem]{Lemma}
\newtheorem{definition}[theorem]{Definition}

\newtheorem{claim}[theorem]{Claim}
\newtheorem{proposition}[theorem]{Proposition}

\newtheorem{remark}[theorem]{Remark}

\title[Reconstruction of coefficients in the double phase problem]{Reconstruction of coefficients in the double phase problem}

\author[C. I. C\^{a}rstea]{C\u{a}t\u{a}lin I. C\^{a}rstea}
\address{Department of Applied Mathematics, National Yang Ming Chiao Tung University, Hsinchu, Taiwan}
\email{catalin.carstea@gmail.com}

\author[P. Zimmermann]{Philipp Zimmermann}
\address{Departament de Matem\`atiques i Inform\`atica, Universitat de Barcelona, Barcelona, Spain}
\email{philipp.zimmermann@ub.edu}

\newcommand{\C}{{\mathbb C}}
\newcommand{\R}{{\mathbb R}}

\newcommand{\N}{{\mathbb N}}

\newcommand{\eps}{\varepsilon}




\DeclareMathOperator{\Div}{div} 
\DeclareMathOperator{\supp}{supp} 
\DeclareMathOperator{\dist}{dist} 
\DeclareMathOperator{\dom}{\Omega} 


\begin{document}

	\maketitle
	\begin{abstract}
		The main purpose of this article is to reconstruct the nonnegative coefficient $a$ in the double phase problem 
        \[
        \begin{cases}
            \Div(|\nabla u|^{p-2}\nabla u+a|\nabla u|^{q-2}\nabla u)=0&\text{ in }\Omega,\\
            u=f&\text{ on }\partial\Omega
        \end{cases}
        \]
        from the Dirichlet to Neumann (DN) map $\Lambda_a$. We show that this can be achieved, when the coefficient $a$ has H\"older continuous first order derivatives and the exponents satisfy $1<p\neq q<\infty$. Our reconstruction method relies on a careful analysis of the asymptotic behavior of the solution $u$ to the double phase problem with small or large Dirichlet datum $f$ (depending on the ordering of $p$ and $q$) as well as the related DN map $\Lambda_a$. As is common for inverse boundary value problems, we need a sufficiently rich family of special solutions to a related partial differential equation, which is independent of the coefficient one aims to reconstruct (in our case to the $p$-Laplace equation). We construct such families of solutions by a suitable linearization technique.  
		
		\medskip
		
		\noindent{\bf Keywords.} Nonlinear PDEs, double phase problems, degenerate coefficients, inverse problems.
		
		\noindent{\bf Mathematics Subject Classification (2020)}: Primary 35R30; secondary 35J62, 35J70 
		
	\end{abstract}

	\tableofcontents

	\section{Introduction}
	\label{sec: introduction}

    In the present article, we intend to uniquely recover the nonnegative (scalar) coefficient $a$ in the \emph{double phase problem}
    \begin{equation}
    \label{eq: PDE double phase}
        \begin{cases}
            \Div(|\nabla u|^{p-2}\nabla u+a|\nabla u|^{q-2}\nabla u)=0&\text{ in }\Omega,\\
            u=f&\text{ on }\partial\Omega
        \end{cases}
    \end{equation}
    from the \emph{Dirichlet to Neumann (DN) map} $\Lambda_a$. The later is formally given by the expression
    \begin{equation}
    \label{eq: formal DN map}
        \Lambda_a f\vcentcolon = \left.(|\nabla u|^{p-2}\partial_\nu u+a|\nabla u|^{q-2}\partial_\nu u)\right|_{\partial\Omega},
    \end{equation}
    where $u$ is the unique solution to \eqref{eq: PDE double phase} with boundary value $f\in W^{1,q}(\Omega)$, $\partial_\nu=\nu\cdot\nabla$ and $\nu$ denotes the outer normal unit vector field to $\partial\Omega$. The DN map $\Lambda_a$ will be rigorously introduced in Section \ref{sec: DN map double phase}. Here and throughout this article, we assume that $\Omega$ is a smoothly bounded domain in $\R^n$ with $n\geq 2$ and the exponents $p$, $q$ and coefficient $a$ satisfy the conditions
     \begin{equation} 
    \label{eq: cond on coeff}
        1<p\neq q<\infty \text{ and }0\leq a\in C^{1,\alpha}(\overline{\Omega})
    \end{equation}
    for some $0<\alpha\leq 1$. In Section \ref{sec: well-posedness}, we first show by variational methods that the Dirichlet problem \eqref{eq: PDE double phase} is uniquely solvable for boundary values in appropriate Sobolev spaces. More concretely, in the case $p<q$, it is established that for boundary values $f\in W^{1,q}(\Omega)$ the solution $u\in W^{1,p}(\Omega)$, satisfying $a|\nabla u|^q\in L^1(\Omega)$, can be constructed as the unique minimizer of the \emph{double phase functional}
    \begin{equation}
        \label{eq: Dirichlet double phase}
        \begin{cases}
            \min_{v\in M_f}\mathcal{F}(v;\Omega),\\
            \mathcal{F}(u;\Omega)\vcentcolon= \int_\Omega \left(|\nabla u|^p+\frac{p}{q}a|\nabla u|^q\right)\,dx,\\
            M_f\vcentcolon = f+W^{1,p}_0(\Omega)\text{ with }f\in W^{1,q}(\Omega).
        \end{cases}
    \end{equation}
    Clearly, a similar characterization holds in the range $p>q$. The key property of the functional $\mathcal{F}$, leading to its name, is that its growth behaviour is in general non-constant over $\Omega$ as the coefficient $a$ can vanish in some subregions of $\Omega$. The region $\{a=0\}$ is usually called the $p$ phase, the region $\{a>0\}$ the $(p,q)$ phase, and one thinks of $\partial\{a>0\}$ as modeling a phase transition. In recent years several researchers investigated the regularity properties of local minimizers of such variational integrals exhibiting different phases. For example, in the article \cite{Colombo-Mingione-bounded-minimizers-double-phase} the authors showed that bounded, local minimizers of the double phase functional are in $C^{1,\beta}(\overline{\Omega'})$, for any $\Omega'\Subset\Omega$, as long as $a\in C^{0,\alpha}(\overline{\Omega})$ and the exponents $p,q$ satisfy the additional restriction $p< q\leq p+\alpha$ (see also \cite{Baroni-Colombo-Mingione-regularity-double-phase,Filippis-Multiphase,Filippis-Multiphase-optimal} for regularity results on broader classes of double phase type functionals). For a more comprehensive discussion on the regularity theory of such variational integrals we refer the interested reader to the aforementioned articles and the references therein.

    \subsection{Main results and comparison to the literature}

    Our main result on the above mentioned inverse problem reads as follows:

      \begin{theorem}
    \label{thm: main theorem double phase}
        Let $\Omega\subset\R^n$ be a smoothly bounded domain, $1<p\neq q<\infty$ and $0<\alpha\leq 1$. If $a\in C^{1,\alpha}(\overline{\Omega})$ is a nonnegative function, then we can explicitly compute $a$ from the DN map $\Lambda_a$.
    \end{theorem}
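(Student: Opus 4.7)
I focus on the case $p<q$; the case $p>q$ is treated symmetrically by replacing the small parameter $\varepsilon\to 0^+$ with a large parameter $M\to\infty$. The strategy has three stages: (i) asymptotic expansion of the solution $u_\varepsilon$ to \eqref{eq: PDE double phase} with datum $\varepsilon f$, (ii) extraction of an integral identity for $a$ from the subleading term of $\Lambda_a(\varepsilon f)$, and (iii) pointwise reconstruction of $a$ by inserting into this identity a rich family of solutions to the linearized $p$-Laplace equation around an affine base.

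For stage (i), let $u^{(0)}$ be the $p$-harmonic function in $\Omega$ with trace $f$; by $(p-1)$-homogeneity of the $p$-Laplacian, $\varepsilon u^{(0)}$ is $p$-harmonic with trace $\varepsilon f$. Combining the well-posedness and energy estimates from Section \ref{sec: well-posedness} with the classical vectorial monotonicity of the $p$-Laplacian, I expect
\begin{equation*}
u_\varepsilon = \varepsilon u^{(0)} + \varepsilon^{q-p+1}\,u^{(1)} + o(\varepsilon^{q-p+1}) \quad \text{in } W^{1,p}(\Omega),
\end{equation*}
with $u^{(1)}\in W^{1,p}_0(\Omega)$ solving the inhomogeneous linearization $L_{u^{(0)}} u^{(1)} = -\Div(a|\nabla u^{(0)}|^{q-2}\nabla u^{(0)})$, where
\begin{equation*}
L_{u^{(0)}} v := \Div\bigl(|\nabla u^{(0)}|^{p-2}\nabla v + (p-2)|\nabla u^{(0)}|^{p-4}(\nabla u^{(0)}\cdot\nabla v)\nabla u^{(0)}\bigr)
\end{equation*}
is the formal linearization of the $p$-Laplacian at $u^{(0)}$. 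Substituting into \eqref{eq: formal DN map} yields the matching expansion
\begin{equation*}
\Lambda_a(\varepsilon f) = \varepsilon^{p-1}\Lambda_0^{(p)} f + \varepsilon^{q-1}\,T_a f + o(\varepsilon^{q-1}),
\end{equation*}
where $\Lambda_0^{(p)}$ is the $p$-Laplace DN map (independent of $a$) and $T_a f$ is isolated from $\Lambda_a$ via the limit $\varepsilon^{1-q}[\Lambda_a(\varepsilon f) - \varepsilon^{p-1}\Lambda_0^{(p)} f]$.

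For stage (ii), let $v$ solve the homogeneous linearized equation $L_{u^{(0)}} v = 0$ in $\Omega$ with trace $g$. Pairing $T_a f$ against $g$ and integrating by parts — using $u^{(1)}|_{\partial\Omega}=0$, the symmetry of the bilinear form associated to $L_{u^{(0)}}$, and $L_{u^{(0)}} v = 0$ to eliminate the $u^{(1)}$-contribution, and a final divergence theorem step to cancel the boundary $a$-flux term — produces the clean identity
\begin{equation*}
\langle T_a f,\, g\rangle_{\partial\Omega} = \int_\Omega a(x)\,|\nabla u^{(0)}|^{q-2}\nabla u^{(0)}\cdot\nabla v\,dx,
\end{equation*}
whose right-hand side is thereby computable from $\Lambda_a$ for every admissible pair $(u^{(0)},v)$. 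For stage (iii), I specialize to the affine $p$-harmonic base $u^{(0)}(x)=b\cdot x$ with $b\in\R^n\setminus\{0\}$. Then $L_{u^{(0)}}$ collapses to the constant-coefficient non-degenerate elliptic operator $|b|^{p-2}\bigl[\Delta + (p-2)(\hat b\cdot\nabla)^2\bigr]$, and the identity reduces to $|b|^{q-2}\int_\Omega a(x)\,b\cdot\nabla v(x)\,dx$. Plain complex exponentials $v(x)=e^{\zeta\cdot x}$ with $\zeta\in\C^n$ on the characteristic variety $\zeta\cdot\zeta+(p-2)(\hat b\cdot\zeta)^2=0$ solve this equation exactly (no CGO remainder is needed, since the operator has constant coefficients); as $b$ and $\zeta$ vary, the resulting integrals supply the Fourier transform of $a$ (extended by zero outside $\Omega$) on a complex analytic set, and analytic continuation then recovers $a$ pointwise.

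The principal analytic obstacle lies in stage (i). Because $z\mapsto|z|^{p-2}z$ is not $C^2$ at the origin when $1<p<2$ and is degenerate when $p>2$, and because $\nabla u^{(0)}$ may vanish for general $p$-harmonic bases, a naïve Taylor expansion of the $p$-Laplacian is unavailable. The remainder must instead be controlled by combining the $C^{1,\beta}$ regularity theory for the double phase functional cited in the introduction, the quantitative monotonicity inequality $\langle|X|^{p-2}X-|Y|^{p-2}Y,\,X-Y\rangle\gtrsim(|X|+|Y|)^{p-2}|X-Y|^2$, and the H\"older regularity of $a$, which smooths the source term driving $u^{(1)}$. The specialization to affine $u^{(0)}$ in stage (iii) avoids the interior degeneracy of $L_{u^{(0)}}$, but obtaining uniform control of the remainder across the full admissible range $1<p\neq q<\infty$, together with the symmetric large-amplitude treatment of the case $p>q$, constitutes the main analytic burden of the proof.
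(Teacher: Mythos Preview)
Your stages (i) and (ii) are essentially correct and close to the paper's Proposition~\ref{prop: asymptotic expansion} and formula~\eqref{eq: def I}. In fact your shortcut in stage (ii)---pairing the subleading DN term against a solution $v$ of the homogeneous linearized equation $L_{u^{(0)}}v=0$, so that the $u^{(1)}$ contribution drops out by symmetry and $u^{(1)}|_{\partial\Omega}=0$---is valid and more direct than what the paper does at that point.

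The gap is in stage (iii). Your clean identity
\[
\langle T_af,g\rangle=\int_\Omega a\,|\nabla u^{(0)}|^{q-2}\nabla u^{(0)}\cdot\nabla v\,dx
\]
is only \emph{linear} in the free test function $v$. With $u^{(0)}=b\cdot x$ and $v=e^{\zeta\cdot x}$ on the characteristic variety $\zeta\cdot\zeta+(p-2)(\hat b\cdot\zeta)^2=0$, you obtain $\int_\Omega a\,e^{\zeta\cdot x}\,dx$ only for $\zeta$ on that quadric. A direct check shows this quadric contains no nonzero purely imaginary $\zeta$ for any $1<p<\infty$, so the Fourier transform of $a$ is never hit. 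Your appeal to ``analytic continuation'' is not justified: knowing an entire function on a complex hypersurface does not determine it, and the union over $b$ does not obviously help. Indeed for $p=2$ (which the theorem allows) one has $L_b=\Delta$ for every $b$, the variety is the fixed null cone $\zeta\cdot\zeta=0$, and your data reduce to the harmonic moments $\int_\Omega a\,w\,dx$ for harmonic $w$. These do \emph{not} determine $a$: for any $\phi\in C_c^\infty(\Omega)$ and any large constant $M$, the coefficients $a_1=M$ and $a_2=M+\Delta\phi$ are nonnegative, $C^{1,\alpha}$, and have identical harmonic moments. So with affine $u^{(0)}$ your scheme fails outright when $p=2$, and is unjustified otherwise.

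The paper's remedy is precisely the step you skipped: a \emph{second} linearization. One embeds $v_0$ in a one-parameter family $v_\tau$ of $p$-harmonic functions with $\partial_\tau|_{\tau=0}v_\tau=V_1$ solving $\Div(A^p_{v_0}\nabla V_1)=0$, and differentiates $I(v_\tau,g)$ at $\tau=0$ (Lemma~\ref{lemma: family of p harmonic functions} and \eqref{eq: def J}). This yields an identity \emph{bilinear} in $(V_1,V_2)$, namely $\int_\Omega a\,A^q_{v_0}\nabla V_1\cdot\nabla V_2\,dx$ plus a computable correction; taking $V_1=e^{\zeta_+\cdot x}$, $V_2=e^{\zeta_-\cdot x}$ with $\zeta_\pm=\pm s\hat b+i\xi$ then produces the pure oscillation $e^{2i\xi\cdot x}$ and gives $\hat a(\xi)$ explicitly. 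The extra $\tau$-derivative is not an inefficiency---it is what converts a linear pairing (insufficient) into a bilinear one (sufficient for reconstruction).
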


    Note that when $p=q$ equation \eqref{eq: Dirichlet double phase} becomes the weighted $p$-Laplace equation
    \[
        \Div\left((1+a)|\nabla u|^{p-2}\nabla u\right)=0.
    \]
    The inverse problem for this equation has been investigated previously. In \cite{BrKaSa,Br,BrHaKaSa,BrIlKa,GuKaSa,KaWa,SaZh} partial results such as boundary determination of the weight, identification of inclusions, etc.~have been obtained. In dimension $n=2$, a general uniqueness result (i.e.~that $\Lambda_{a_1}=\Lambda_{a_2}$ implies that $a_1=a_2$) has been shown in \cite{CarFe3}, together with a weaker result for $n\geq3$, when the weight is real-analytic and slowly varying in one direction. The general case when $n\geq3$ remains open. Given all this, it is interesting that a reconstruction result is available for the double phase equation when $p\neq q$. To borrow an often used phrase, we can say that the $p$-Laplace nonlinearity helps with the recovery of the weighted $q$-Laplacian term.

The study of inverse problems for nonlinear equations has attracted a lot of interest over the last decade. Examples of works for semilinear equations include   \cite{FeOk,Is1,IsNa,IsSy,KrUh,KrUh2,LaLiiLinSa1,LaLiiLinSa2,Sun2} and for quasilinear equations we can refer the reader to \cite{CarFe1,CarFe2,CarFeKiKrUh,CarGhNa,CarGhUh,CarNaVa,Car,EgPiSc,HeSun,Is2,KaNa,MuUh,Sh,Sun1,Sun3,SunUh}. We want to highlight separately the articles \cite{BrKaSa,Br,BrHaKaSa,BrIlKa,CarFe3,CarKa,Car2,GuKaSa,KaWa,SaZh}, which deal with equations involving the $p$-Laplacian and related operators. Some of the results obtained in the aforementioned articles have also been extended to nonlocal operators. For example, the articles \cite{Frac-p-Lap,Frac-p-biharm} consider fractional $p$-Laplace and fractional $p$-biharmonic operators, whereas \cite{Frac-porous-medium} studies an inverse problem for nonlocal porous medium type equations.  

We would like to emphasize that the number of works for degenerate equations is still quite small. To the best of our knowledge, these questions have only been addressed with somewhat partial results in \cite{BrKaSa,Br,BrHaKaSa,BrIlKa,GuKaSa,KaWa,SaZh}, and with more general uniqueness results in the more recent papers \cite{CarFe3,CarGhNa,CarGhUh}. The results of our paper add to this newly developing field. 

The problem we consider in the present paper is somewhat similar to that considered in \cite{CarKa}, where an equation of the form 
\[
    \Div(\gamma\nabla u+a|\nabla u|^{q-2}\nabla u)=0
\]
is studied. Indeed, if $\gamma=1$, the result of \cite{CarKa} can be seen as a particular case of Theorem \ref{thm: main theorem double phase}, with $p=2$. The other paper that studies a similar equation is \cite{Car2}, where the equation 
\[
    \Div(a\nabla u+|\nabla u|^{p-2}\nabla u)=0
\]
is considered, with $a$ a (uniformly elliptic) matrix-valued coefficient. This case is partially analogous to taking $q=2$ in this paper.

Most works on inverse problems for nonlinear equations employ the so-called \emph{second/higher linearization} method, which was first introduced in \cite{Is1}. In this method one uses Dirichlet data $f$ that depend on small parameters $\epsilon_1,\ldots, \epsilon_N$, typically of the form $\sum_k\epsilon_k\phi_k$, then one considers derivatives of different orders in $\epsilon_1,\ldots,\epsilon_N$  of the equation and the DN map $\Lambda(\sum_k\epsilon_k\phi_k)$, in order to accumulate information about the coefficients appearing in the original partial differential equation (PDE). 

Here, we use a slightly different approach. For illustration purposes, suppose that we are in the $1<p<q<\infty$ regime. We take Dirichlet data of the form $f=\eps v$ and we expect that as $\eps\to0$ the $p$-Laplace term dominates the equation. Assuming $v$ is a $p$-harmonic function (which for technical reasons needs to have no critical points), we show that the corresponding solution $u_\eps$ of \eqref{eq: PDE double phase} has the asymptotic behavior 
\begin{equation}
u_\eps=\eps v+\eps^{1+q-p}R_v+o(\eps^{1+q-p}),
\end{equation}
where $R_v$ is independent of $\eps$ but depends on $v$ and $a$, as it is the unique solution in $C_0^{2,\beta}(\overline{\dom})$ (the zero indicates that it vanishes on the boundary) of a linear elliptic equation
\[
    \Div(A_v^p\nabla R)=-\Div(a|\nabla v|^{q-2}\nabla v),  
\]
with $v$-dependent matrix coefficients $A_v^p$. This asymptotic expansion of $u_\eps$ then guarantees an expansion for $\Lambda_a(\eps v)$. For any $\omega\in W^{1,q}(\Omega)$, we will show that
\begin{equation}
\lim_{\eps\to0+}\eps^{1-q}\langle(\Lambda_a-\Lambda_0)(\eps v),\omega\rangle
=\int_{\dom}(A_v^p\nabla R_v+a|\nabla v|^{q-2}\nabla v)\cdot\nabla\omega\,dx.
\end{equation}
We then make special choices of functions $\omega$ and $p$-harmonic functions $v$ in order to recover $a$.

    \subsection{Organization of the article}
    We first introduce in Section \ref{sec: notation} the main conventions imposed in this article. Afterwards, in Section \ref{sec: double phase equation} we discuss some background material on the double phase problem, like the well-posedness theory and the maximum/comparison principle. Then, in Section \ref{sec: asymptotics for small dirichlet data} we move on to the asymptotic analysis of solutions to the double phase problem with small (large) Dirichlet data. In Section \ref{sec: DN map double phase}, we give the rigorous definition of the DN map and discuss some of its properties. Section \ref{sec: p-Laplace linearization} deals with the construction of families $v_\tau$ of $p$ harmonic functions, which have a prescribed asymptotic expansion. In Section \ref{sec: proof of thm 1.1} we give the proof of Theorem \ref{thm: main theorem double phase}, which combines the material of Sections \ref{sec: asymptotics for small dirichlet data} and \ref{sec: p-Laplace linearization} with the method of linearization.

    \section{Notation}
    \label{sec: notation}

    Throughout this article, we make use of the following conventions.
    \begin{itemize}
        \item $\Omega\subset\R^n$ is a smoothly bounded domain, although all our results extend to lower regular domains (e.g.~$C^{2,\alpha}$ domains), and points in $\R^n$ are denoted by $x=(x_1,\ldots,x_n)$.
        \item For any scalar valued function $u$ on $\Omega$, we denote by $\nabla u=(\partial_1u,\ldots,\partial_n u)$ the gradient of $u$, where $\partial_j=\partial_{x_j}$ is the partial derivative with respect to $x_j$, and for vector valued functions $u$ the expression $\nabla u$ stands for its Jacobian matrix $(\nabla u)_{ij}=(\partial_j u_i)$ for $1\leq i,j\leq n$. More generally, we define the $k-$th order gradient $\nabla^k$ inductively by the formula $\nabla^k=\nabla^{k-1}\nabla $ for all $k\geq 2$.
        \item For any $k\in\N_0$, $C^k(\Omega)$ stands for the space of $k-$times continuously differentiable functions in $\Omega$ and $C^k(\overline{\Omega})$ is the subspace of $C^k(\Omega)$ such that $\nabla^{\ell} u$, $0\leq \ell\leq k$, can be continuously extended to $\overline{\Omega}$. The later space is a Banach under the norm
        \[
            \|u\|_{C^k(\overline{\Omega})}=\sum_{0\leq \ell\leq k}\|\nabla^{\ell}u\|_{L^{\infty}(\Omega)}\text{ with }\|u\|_{L^{\infty}(\Omega)}=\sup_{x\in\Omega}|u(x)|.
        \]
        We also write $u\in C^k(\overline{\Omega})$ ($u\in C^k(\Omega)$) and $\|u\|_{C^k(\overline{\Omega})}$, when $u$ is a vector valued function as the range will always be clear from the context and the particular used norm on the range does not matter for our analysis. The same applies to other function spaces, like H\"older or Sobolev spaces, and the related norms.
        \item If $0<\alpha\leq 1$ and $k\in\N_0$, then $C^{k,\alpha}(\overline{\Omega})$ is the space of $C^k(\overline{\Omega})$ functions such that the $k-$th order derivative is $\alpha$ H\"older continuous, that is, there holds
        \begin{equation}
            [\nabla^k u]_{C^{0,\alpha}(\overline{\Omega})}\vcentcolon = \sup_{x\neq y\in\Omega}\frac{|\nabla^k u(x)-\nabla^k u(y)|}{|x-y|^{\alpha}}<\infty.
        \end{equation}
        We endow $C^{k,\alpha}(\overline{\Omega})$ with the following norm
        \begin{equation}
            \|u\|_{C^{k,\alpha}(\overline{\Omega})}=\|u\|_{C^k(\overline{\Omega})}+[\nabla^k u]_{C^{0,\alpha}(\overline{\Omega})},
        \end{equation}
        under which they become Banach spaces.
        \item For all $1\leq p\leq \infty$ and $k\in\N_0$, we denote by $L^p(\Omega)$ the space of measurable functions $u$ in $\Omega$ such that
        \[
            \|u\|_{L^p(\Omega)}\vcentcolon = \left(\int_\Omega |u|^p\,dx\right)^{1/p}<\infty
        \]
        and by $W^{k,p}(\Omega)$ the space of $u\in L^p(\Omega)$ such that the distributional derivatives $\nabla^{\ell} u$, $0\leq \ell\leq k$, belong to $L^p(\Omega)$. We endow the Sobolev spaces $W^{k,p}(\Omega)$ with the norm
        \[
            \|u\|_{W^{k,p}(\Omega)}=\left(\sum_{0\leq\ell\leq k}\|\nabla^\ell u\|_{L^p(\Omega)}^p\right)^{1/p}.
        \]
        As usual, the spaces $L^p_{loc}(\Omega)$ and $W^{k,p}_{loc}(\Omega)$ consist of all measurable functions $u$ such that $u\in L^p(\Omega')$ and $u\in W^{k,p}(\Omega')$, respectively, for all $\Omega'\Subset \Omega$.
    \end{itemize}

    \section{The double phase problem}
    \label{sec: double phase equation}

    Here, we collect some basic results on the double phase problem: In Section \ref{sec: well-posedness} we show the well-posedness of \eqref{eq: PDE double phase}, in Section \ref{sec: local minimizer} we briefly explain its relation to local minimizers of the double phase functional and establish in Section \ref{sec: maximum principle} a maximum/comparison principle.

    \subsection{Well-posedness}
    \label{sec: well-posedness}

     Let us start by introducing for any bounded Lipschitz domain $\Omega\subset\R^n$, $a\in L^{\infty}(\Omega)$ with $a\geq 0$ and $1<p\leq q<\infty$, the Sobolev type spaces
    \begin{equation}
    \label{eq: solution space}
    \begin{split}
        W^{1,p,q}_a(\Omega)&\vcentcolon = \{u\in W^{1,p}(\Omega)\,;\,a|\nabla u|^q\in L^1(\Omega)\},\\
        W^{1,p,q}_{a,loc}(\Omega)&\vcentcolon =\{u\in W^{1,p}_{loc}(\Omega)\,;\,a|\nabla u|^q\in L^1_{loc}(\Omega)\},\\
        W^{1,p,q}_{a,0}(\Omega)&\vcentcolon =W^{1,p}_0(\Omega)\cap W^{1,p,q}_a(\Omega).
    \end{split}
    \end{equation}
    Under the norm
    \begin{equation}
    \label{eq: norm on W 1pq}
        \|u\|_{W^{1,p,q}_a(\Omega)}\vcentcolon = \|u\|_{W^{1,p}(\Omega)}+\|\nabla u\|_{L^q(\Omega;a)},
    \end{equation}
    where 
    \[
        \|v\|_{L^q(\Omega;a)}\vcentcolon =\left(\int_\Omega a|v|^q\,dx\right)^{1/q},
    \]
    the spaces $W^{1,p,q}_a(\Omega)$ and $W^{1,p,q}_{a,0}(\Omega)$ defined in \eqref{eq: solution space} become Banach spaces. This is a direct consequence of the fact that $W^{1,p}(\Omega)$ and $L^q(\Omega;a)$ are Banach spaces.
    We have the following elementary lemma.
  
  \begin{lemma}
    \label{lemma: properties of W 1pq}
        Let $\Omega\subset\R^n$ be a bounded Lipschitz domain, $1<p\leq q<\infty$ and assume that $a\in L^{\infty}(\Omega)$ with $a\geq 0$. The space $W^{1,p,q}_a(\Omega)$ endowed with the norm $\|\cdot\|_{W^{1,p,q}_a(\Omega)}$ is a separable, reflexive Banach space.
    \end{lemma}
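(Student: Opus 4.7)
My plan is to realize $W^{1,p,q}_a(\Omega)$ as a closed linear subspace of a product of two classical Banach spaces whose separability and reflexivity are standard, and then inherit both properties from the ambient space. The building blocks are: (i) $W^{1,p}(\Omega)$ is separable and reflexive for $1<p<\infty$, and (ii) interpreting $L^q(\Omega;a)$ as the Lebesgue space $L^q(\Omega, a\,dx)$ associated with the finite Radon measure $d\mu=a\,dx$ (with functions identified modulo $\mu$-null sets), it is also separable and reflexive for $1<q<\infty$, since $\mu$ is $\sigma$-finite and $\Omega$ is second-countable. Consequently the product $X:=W^{1,p}(\Omega)\times L^q(\Omega, a\,dx;\R^n)$, endowed with the sum of the factor norms, is separable and reflexive. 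I would then consider the linear map $T\colon W^{1,p,q}_a(\Omega)\to X$ defined by $T(u):=(u,\nabla u)$, which by the very definition of the norm in \eqref{eq: norm on W 1pq} is an isometry onto its image.

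The crux is to verify that $T(W^{1,p,q}_a(\Omega))$ is closed in $X$. Given a sequence with $T(u_k)=(u_k,\nabla u_k)\to(u,V)$ in $X$, the first component yields $u_k\to u$ in $W^{1,p}(\Omega)$, so $\nabla u_k\to\nabla u$ in $L^p(\Omega;\R^n)$, and along a subsequence $\nabla u_k\to\nabla u$ Lebesgue-a.e., hence also $a\,dx$-a.e. The second component, along a further subsequence, gives pointwise $a\,dx$-a.e.\ convergence to $V$. Uniqueness of pointwise limits then forces $V=\nabla u$ as elements of $L^q(\Omega, a\,dx;\R^n)$, so $(u,V)=T(u)$ belongs to the image and the closedness follows.

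Since closed linear subspaces of separable reflexive Banach spaces inherit both properties, this yields the lemma. I do not anticipate any serious obstacle; the only point requiring mild care is the bookkeeping of the two equivalence relations involved (Lebesgue-a.e.\ versus $a\,dx$-a.e.), which is handled cleanly by the double subsequence extraction above, with no estimate needed beyond the definitions of the norms and elementary measure theory.
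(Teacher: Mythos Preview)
Your proof is correct and follows essentially the same strategy as the paper: realize $W^{1,p,q}_a(\Omega)$ isometrically as a closed subspace of a product of standard Banach spaces and inherit completeness, separability, and reflexivity from the ambient space. The paper chooses the target $L^p(\Omega)\times L^p(\Omega)\times L^q(\Omega)$ via $u\mapsto(u,\nabla u,a^{1/q}\nabla u)$ and deduces reflexivity by establishing uniform convexity and invoking Milman--Pettis, whereas your choice of $W^{1,p}(\Omega)\times L^q(\Omega,a\,dx)$ together with direct inheritance of reflexivity by closed subspaces is slightly more economical.
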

    \begin{proof}      
        \textbf{Step 1:} First, we show that $W^{1,p,q}_a(\Omega)$ is a Banach space. From the definitions it is clear that $W^{1,p,q}_a(\Omega)$ is a normed space and hence we only need to show that its complete. For this purpose, assume that $(u_n)_{n\in\N}\subset W^{1,p,q}_a(\Omega)$ is a Cauchy sequence. Then $(u_n)_{n\in\N}$ is a Cauchy sequence in $W^{1,p}(\Omega)$ and $(\nabla u_n)_{n\in\N}$ in $L^q(\Omega;a)$. By completeness of these spaces, there exist $u\in W^{1,p}(\Omega)$ and $v\in L^q(\Omega;a)$ such that
          \[
            u_n\to u\text{ in }W^{1,p}(\Omega)\text{ and }\nabla u_n\to v\text{ in }L^q(\Omega;a)
        \]
        as $n\to\infty$. As $L^r$ convergence implies a.e. convergence for a suitable subsequence, we may infer $v=\nabla u$ a.e. with respect to the measure $a\,dx$ and hence $u\in W^{1,p,q}_a(\Omega)$ such that $u_n\to u$ in $W^{1,p,q}_a(\Omega)$. This proves the completeness of $W^{1,p,q}_a(\Omega)$.\\

        \noindent\textbf{Step 2.} Next, we prove that $W^{1,p,q}_a(\Omega)$ is seperable and reflexive. To this end, let us consider the mapping
        \begin{equation}
        \label{eq: map j}
            j\colon W^{1,p,q}_a(\Omega)\to L^p(\Omega)\times L^p(\Omega)\times L^q(\Omega)\text{ with }j(u)\vcentcolon =  (u,\nabla u, a^{1/q}\nabla u).
        \end{equation}
        Let $\|\cdot\|_{W^{1,p,q}_a(\Omega)}^\ast$ be the equivalent norm
        \[
            \|u\|_{W^{1,p,q}_a(\Omega)}^\ast\vcentcolon = \|u\|_{L^p(\Omega)}+\|\nabla u\|_{L^p(\Omega)}+\|a^{1/q}\nabla u\|_{L^q(\Omega)}
        \]
        on $W^{1,p,q}_a(\Omega)$. Then the map $j$, defined by \eqref{eq: map j}, is an isometric embedding, when we use the product norm
        \[
            \|(u,v,w)\|^{\ast}_{L^p(\Omega)\times L^p(\Omega)\times L^q(\Omega)}=\|u\|_{L^p(\Omega)}+\|v\|_{L^p(\Omega)}+\|w\|_{L^q(\Omega)}
        \]
        for $(u,v,w)\in L^p(\Omega)\times L^p(\Omega)\times L^q(\Omega)$. As the target space is a cartesian product of separable spaces, it is separable and $j(W^{1,p,q}_a(\Omega))$ is separable. Now, using that $j$ is an isometry, we see that $W^{1,p,q}_a(\Omega)$ is separable. The space $W^{1,p,q}_a(\Omega)$ is also separable under the original norm as separability is preserved in the class of equivalent norms.
        
        This time we endow $W^{1,p,q}_a(\Omega)$ and $L^p(\Omega)\times L^p(\Omega)\times L^q(\Omega)$ with the equivalent norms
        \[
        \begin{split}
            &\|u\|'_{W^{1,p,q}_a(\Omega)} \vcentcolon = (\|u\|^r_{L^p(\Omega)}+\|\nabla u\|^r_{L^p(\Omega)}+\|a^{1/q}\nabla u\|^r_{L^q(\Omega)})^{1/r},\\
            &\|(u,v,w)\|'_{L^p(\Omega)\times L^p(\Omega)\times L^q(\Omega)} \vcentcolon = (\|u\|_{L^p(\Omega)}^r+\|v\|_{L^p(\Omega)}^r+\|w\|_{L^q(\Omega)}^r)^{1/r}
        \end{split}
        \]
        for some $1<r<\infty$. Then $j$ is again an isometry. From \cite[Theorem 1]{clarkson1936uniformly} and the fact that Clarkson's inequalities imply the uniform convexity of $L^s(\Omega)$ for $1<s<\infty$, it follows that $L^p(\Omega)\times L^p(\Omega)\times L^q(\Omega)$ is uniformly convex. Next, observe that by Step 1 the linear subspace $j(W^{1,p,q}_a(\Omega))$ of $L^p(\Omega)\times L^p(\Omega)\times L^q(\Omega)$ is closed. Hence, $j(W^{1,p,q}_a(\Omega))$ is uniformly convex. The Milmann--Pettis theorem now shows that $j(W^{1,p,q}_a(\Omega))$ is reflexive. As reflexivity is preserved under isomorphisms, we can conclude that $W^{1,p,q}_a(\Omega)$ is reflexive. The same remains true for the usual norm on $W^{1,p,q}_a(\Omega)$.
    \end{proof} 
    
    To prove the well-posedness of \eqref{eq: PDE double phase}, we need in addition the following auxiliary lemma.

    \begin{lemma}[{cf.~\cite[eq.~(2.2)]{Simon}, \cite[Lemma 5.1-5.2]{Estimate_p_Laplacian} and \cite[Appendix~A]{SaZh}}]
    \label{auxialiary lemma}
        For all $1<r<\infty$, $n\in\N$ and $x,y\in\R^n$, there holds
        \begin{equation}
            \label{eq: minimality}
            |x|^r\geq |y|^r+r|y|^{r-2}y\cdot(x-y),
        \end{equation}
		\begin{equation}
			\label{eq: monotonicity est}
			\left( |x|^{r-2}x-|y|^{r-2}y\right) \cdot (x-y)\gtrsim \begin{cases}
			    |x-y|^r, &\text{ for }r\geq 2,\\
                \frac{|x-y|^2}{(|x|+|y|)^{2-r}},&\text{ for }1<r<2,
			\end{cases}
		\end{equation}
		\begin{equation}
			\label{eq: estimate mikko}	
			\begin{split}	
				\left| |x|^{r-2}x-|y|^{r-2}y\right|\lesssim &(|x|+|y|)^{r-2}|x-y|
			\end{split}
		\end{equation}
        and 
        \begin{equation}
        \label{eq: difference rth power}
            ||x|^r-|y|^r|\leq r(|x|^{r-1}+|y|^{r-1})(x-y).
        \end{equation}
    \end{lemma}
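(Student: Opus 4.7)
The plan is to treat these four estimates as a package, relying on the convexity properties of the function $\Phi(z) \vcentcolon= r^{-1}|z|^r$, whose gradient is $\nabla\Phi(z) = |z|^{r-2}z$ and whose Hessian (for $z\neq 0$) is $D^2\Phi(z) = |z|^{r-2}I + (r-2)|z|^{r-4}z\otimes z$. The eigenvalues of $D^2\Phi(z)$ are $|z|^{r-2}$ on $z^{\perp}$ and $(r-1)|z|^{r-2}$ along $\R z$, so for every $h\in\R^n$ one has $\min(1,r-1)|z|^{r-2}|h|^2 \leq D^2\Phi(z)(h,h) \leq \max(1,r-1)|z|^{r-2}|h|^2$. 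Having this at hand, the four estimates follow by differentiating along the segment $z(t) \vcentcolon= y + t(x-y)$, $t\in[0,1]$.

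For \eqref{eq: minimality}, $\Phi$ is convex as $r>1$, so the first order convexity inequality $\Phi(x) \geq \Phi(y) + \nabla\Phi(y)\cdot(x-y)$ multiplied by $r$ yields the claim (the case $y=0$ follows by continuity). For \eqref{eq: monotonicity est}, I would write
\begin{equation}
(|x|^{r-2}x - |y|^{r-2}y)\cdot(x-y) = \int_0^1 D^2\Phi(z(t))(x-y,x-y)\,dt,
\end{equation}
and bound the integrand from below using the Hessian eigenvalue information. In the regime $r\geq 2$, the argument reduces to showing $\int_0^1 |z(t)|^{r-2}\,dt \gtrsim |x-y|^{r-2}$, which is elementary: either $|x|$ or $|y|$ is at least $|x-y|/2$, so on a fixed subinterval of $[0,1]$ we have $|z(t)|\gtrsim |x-y|$. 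In the regime $1<r<2$ the same identity gives $\int_0^1 |z(t)|^{r-2}\,dt \gtrsim (|x|+|y|)^{r-2}$, which together with the extra factor $|x-y|^2$ produces the stated quotient estimate; the lower bound on the integral here comes from $|z(t)| \leq |x|+|y|$ and the integral of a constant.

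Estimate \eqref{eq: estimate mikko} follows analogously from the vector-valued identity
\begin{equation}
|x|^{r-2}x - |y|^{r-2}y = \int_0^1 D^2\Phi(z(t))(x-y)\,dt,
\end{equation}
together with the operator bound $|D^2\Phi(z)| \lesssim |z|^{r-2}$ and the crude bound $|z(t)|\leq |x|+|y|$; for $r\geq 2$ the exponent is positive and this is immediate, while for $1<r<2$ one needs the elementary fact $\int_0^1 |z(t)|^{r-2}\,dt \lesssim (|x|+|y|)^{r-2}$, which again reduces to the geometric observation above. Finally, \eqref{eq: difference rth power} is obtained by computing $\frac{d}{dt}|z(t)|^r = r|z(t)|^{r-2}z(t)\cdot(x-y)$ and applying Cauchy--Schwarz together with the pointwise bound $|z(t)|^{r-1} \leq (|x|+|y|)^{r-1} \leq 2^{(r-1)_+}(|x|^{r-1}+|y|^{r-1})$ (absorbed in the constant $r$ up to adjusting it).

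The one delicate point is the lower bound in \eqref{eq: monotonicity est} when $1<r<2$, because the Hessian eigenvalue $(r-1)|z(t)|^{r-2}$ is singular when the segment $z(t)$ passes near the origin. This forces the careful choice of using the upper bound $|z(t)|\leq |x|+|y|$ (giving $|z(t)|^{r-2}\geq (|x|+|y|)^{r-2}$) rather than any pointwise lower bound, and this is precisely the reason the inequality takes the quotient form with $(|x|+|y|)^{2-r}$ in the denominator. Since these four inequalities are by now classical (e.g.~\cite{Simon,Estimate_p_Laplacian,SaZh}), I would keep the presentation brief and mainly record the integral identities above, leaving the elementary pointwise estimates implicit.
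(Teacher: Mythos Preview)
The paper does not prove this lemma; it is quoted with references to \cite{Simon,Estimate_p_Laplacian,SaZh} and used without argument, so there is no in-paper proof to compare against. Your outline via the convex potential $\Phi(z)=r^{-1}|z|^r$ and integration of $D^2\Phi$ along the segment is the standard route and is correct in substance.

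Two details warrant more care than you indicate. For \eqref{eq: estimate mikko} with $1<r<2$, the bound $\int_0^1|z(t)|^{r-2}\,dt\lesssim(|x|+|y|)^{r-2}$ is true but does not ``reduce to the geometric observation above'': a lower bound on $|z(t)|$ on a fixed subinterval says nothing about the rest of the integral when the exponent is negative and the segment may pass through the origin. One clean fix is a dichotomy: if $|x-y|\leq\tfrac12\max(|x|,|y|)$ then $|z(t)|\gtrsim|x|+|y|$ for all $t$; otherwise $|x-y|\gtrsim|x|+|y|$, and the orthogonal-projection bound $|z(t)|\geq|t-t_0|\,|x-y|$ integrates to $C_r|x-y|^{r-2}\lesssim(|x|+|y|)^{r-2}$. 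For \eqref{eq: difference rth power}, your argument produces the inequality only with an extra $r$-dependent factor, while the lemma (and its application in Lemma~\ref{auxiliary lemma 2}) records the sharp constant $r$; that version follows directly from the one-dimensional mean value theorem for $t\mapsto t^r$ together with $\bigl||x|-|y|\bigr|\leq|x-y|$. (The right-hand side of \eqref{eq: difference rth power} in the paper should of course read $|x-y|$, as you tacitly assumed.)
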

    Using the above spaces and the previous auxiliary lemma, we can establish the following well-posedness result:
    \begin{theorem}[Well-posedness]
    \label{thm: well-posedness double phase}
        Let $\Omega\subset\R^n$ be a bounded Lipschitz domain, $1<p\leq q<\infty$ and $a\in L^{\infty}(\Omega)$ a nonnegative function. Then the Dirichlet problem \eqref{eq: PDE double phase} has for any $f\in W^{1,q}(\Omega)$ a unique solution $u\in W^{1,p,q}_a(\Omega)$, that is there holds
        \begin{equation}
        \label{eq: weak solutions double phase}
            \int_\Omega \left(|\nabla u|^{p-2}\nabla u+a|\nabla u|^{q-2}\nabla u\right)\cdot\nabla \varphi\,dx=0
        \end{equation}
        for all $\varphi\in W^{1,p,q}_{a,0}(\Omega)$ and $u=f$ on $\partial\Omega$ in the sense of traces. 
        
        Moreover, $u$ can be characterized as the unique minimizer of \eqref{eq: Dirichlet double phase} and satisfies the estimates
        \begin{equation}
        \label{eq: energy estimate W 1p}
            \|u\|_{W^{1,p}(\Omega)}\leq C(\|\nabla f\|_{L^q(\Omega)}+\|\nabla f\|_{L^q(\Omega)}^{q/p}+\|f\|_{L^p(\partial\Omega)})
        \end{equation}
        and
    \begin{equation}
    \label{eq: energy estimate}
        \|u\|_{W^{1,p,q}_a(\Omega)}\leq C(\|\nabla f\|_{L^q(\Omega)}+\|\nabla f\|_{L^q(\Omega)}^{p/q}+\|\nabla f\|_{L^q(\Omega)}^{q/p}+\|f\|_{L^q(\partial\Omega)}).
    \end{equation}
    for some $C>0$ depending increasingly on $\|a\|_{L^{\infty}(\Omega)}$.
    \end{theorem}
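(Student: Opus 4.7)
I would proceed via the direct method of the calculus of variations: establish first the existence of a unique minimizer of $\mathcal{F}(\cdot;\Omega)$ over $M_f$, then identify it with the unique weak solution, and finally derive the energy bounds by a test-function argument.

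\textbf{Existence of a unique minimizer.} Since $f\in W^{1,q}(\Omega)$ and $a\in L^\infty(\Omega)$, we have $f\in M_f$ with $\mathcal{F}(f;\Omega)<\infty$, so the infimum is finite. Any minimizing sequence $(u_n)\subset M_f$ is bounded in $W^{1,p}(\Omega)$ (Poincar\'e on $u_n-f\in W^{1,p}_0(\Omega)$), and in addition $a^{1/q}\nabla u_n$ is bounded in $L^q(\Omega)$. Extract a subsequence with $u_n\weak u$ in $W^{1,p}(\Omega)$ and $a^{1/q}\nabla u_n\weak g$ in $L^q(\Omega)$; weak closedness of $W^{1,p}_0(\Omega)$ gives $u-f\in W^{1,p}_0(\Omega)$, and testing the second weak convergence against $a^{1/q}\psi$ with $\psi\in C_c^\infty(\Omega)$ (an $L^{p'}$ function, since $a\in L^\infty$) identifies $g=a^{1/q}\nabla u$. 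Weak lower semicontinuity of both growth terms (convexity of $|\cdot|^p,|\cdot|^q$) yields $\mathcal{F}(u;\Omega)\le\liminf_n\mathcal{F}(u_n;\Omega)$, and strict convexity in $\nabla v$ combined with Poincar\'e on $M_f$ gives uniqueness. Finiteness of $\mathcal{F}(u;\Omega)$ places $u\in W^{1,p,q}_a(\Omega)$.

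\textbf{Equivalence with weak solutions.} For $\varphi\in W^{1,p,q}_{a,0}(\Omega)$ and $t\in(0,1]$, the pointwise bound \eqref{eq: difference rth power} dominates the difference quotient of the integrand of $\mathcal{F}$ by an $L^1(\Omega)$ function independent of $t$; dominated convergence then yields the Gateaux derivative at $t=0$, and minimality produces \eqref{eq: weak solutions double phase}. Conversely, if $u_1,u_2$ are two weak solutions with identical trace $f$, testing with $\varphi=u_1-u_2\in W^{1,p,q}_{a,0}(\Omega)$ and applying the monotonicity bound \eqref{eq: monotonicity est} forces $\nabla u_1=\nabla u_2$ a.e., whence $u_1=u_2$ by Poincar\'e; this also gives uniqueness of the weak solution directly.

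\textbf{Energy estimates.} Testing \eqref{eq: weak solutions double phase} with $\varphi=u-f$ gives
\[
\int_{\Omega}\bigl(|\nabla u|^p+a|\nabla u|^q\bigr)\,dx=\int_{\Omega}\bigl(|\nabla u|^{p-2}\nabla u+a|\nabla u|^{q-2}\nabla u\bigr)\cdot\nabla f\,dx;
\]
Young's inequality absorbs $\eps|\nabla u|^p$ and $\eps a|\nabla u|^q$ into the left side, leaving $\|\nabla u\|_{L^p(\Omega)}^p+\|\nabla u\|_{L^q(\Omega;a)}^q\lesssim\|\nabla f\|_{L^p(\Omega)}^p+\|\nabla f\|_{L^q(\Omega;a)}^q$. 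H\"older ($p\le q$, $|\Omega|<\infty$) and $a\in L^\infty$ reduce the right side to $\|\nabla f\|_{L^q(\Omega)}^p+\|\nabla f\|_{L^q(\Omega)}^q$, and taking $p$-th and $q$-th roots yields the exponents $1$, $p/q$, $q/p$ in \eqref{eq: energy estimate W 1p}--\eqref{eq: energy estimate}. The $L^p$ norm of $u$ is then absorbed via the trace-Poincar\'e inequality $\|u\|_{L^p(\Omega)}\lesssim\|\nabla u\|_{L^p(\Omega)}+\|f\|_{L^p(\partial\Omega)}$, with $\|f\|_{L^p(\partial\Omega)}\le C\|f\|_{L^q(\partial\Omega)}$ for \eqref{eq: energy estimate}.

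\textbf{Main obstacle.} The principal subtlety is lower semicontinuity and weak-limit identification for the $(p,q)$-phase term: one has only weak $W^{1,p}$ convergence of $u_n$ plus a weak $L^q$ bound on $a^{1/q}\nabla u_n$, and $a$ may vanish on a nontrivial subset of $\Omega$. The remedy is to track the pair $(u_n,a^{1/q}\nabla u_n)$ in the product $W^{1,p}(\Omega)\times L^q(\Omega)$ and reconcile the two weak limits by the density of $a^{1/q}C_c^\infty(\Omega)$ in a suitable test class. A minor recurring technicality is that variations must lie in $W^{1,p,q}_{a,0}(\Omega)$ rather than $W^{1,p}_0(\Omega)$ to guarantee $\mathcal{F}(u+t\varphi)<\infty$, which is exactly the test-function class appearing in the theorem.
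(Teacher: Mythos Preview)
Your proposal is correct and follows the same overall variational strategy as the paper (minimizer exists, coincides with the weak solution, energy bounds follow), but the implementation of several steps differs. The paper obtains the minimizer in one line by invoking \cite[Theorem~3.30]{dacorogna2007direct} (strict convexity plus standard growth), whereas you carry out the direct method by hand and must therefore handle the weighted $q$-phase term explicitly---which is exactly the subtlety you flag as the ``main obstacle''. For the energy estimates the paper simply uses the minimality $\mathcal{F}(u;\Omega)\le\mathcal{F}(f;\Omega)$ and reads off the bounds, while you test the Euler--Lagrange equation with $u-f$ and use Young's inequality; both routes arrive at the same exponents. Finally, for uniqueness of weak solutions the paper shows that any solution is a minimizer via the convexity inequality \eqref{eq: minimality} and then appeals to uniqueness of minimizers, whereas you use the monotonicity bound \eqref{eq: monotonicity est} directly. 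Your route is slightly more hands-on and self-contained; the paper's is shorter because it delegates the compactness/lower-semicontinuity work to a general reference and exploits minimality rather than the equation for the a priori bounds.
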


    \begin{proof}
        First, we consider the minimization problem \eqref{eq: Dirichlet double phase}. Since $f\in W^{1,q}(\Omega)\subset W^{1,p}(\Omega)$ and $a\in L^{\infty}(\Omega)$, we have $\mathcal{F}(f;\Omega)<\infty$. Therefore, the strict convexity of the integrand of \eqref{eq: Dirichlet double phase} and \cite[Theorem 3.30]{dacorogna2007direct} ensure the existence of a unique minimizer $u\in M_f\cap W^{1,p,q}_a(\Omega)$ of \eqref{eq: Dirichlet double phase}. By the minimality property of $u$, we have
        \[
            \|\nabla u\|_{L^p(\Omega)}^p\leq \mathcal{F}(u;\Omega)\leq \mathcal{F}(f;\Omega).
        \]
        Using the Poincar\'e inequality
        \begin{equation}
        \label{eq: Poincare}
            \int_\Omega |v|^p\,dx\leq C\left(\int_\Omega |\nabla v|^p\,dx+\int_{\partial\Omega}|v|_{\partial\Omega}|^p\,d\sigma\right)
        \end{equation}
        for $v\in W^{1,p}(\Omega)$ and some $C>0$ only depending on $n,p$ and $\Omega$ (see, for example, \cite{Elliptic-PDEs-Xavi}), we get
        \begin{equation}
        \label{eq: W 1p control}
            \|u\|_{W^{1,p}(\Omega)}\leq C((\mathcal{F}(f;\Omega))^{1/p}+\|f\|_{L^p(\partial\Omega)}).
        \end{equation}
        Furthermore, by the minimality property we also have
        \begin{equation}
        \label{eq: Lqa control}
            \|\nabla u\|_{L^q(\Omega;a)}^q\lesssim \mathcal{F}(u;\Omega)\lesssim \mathcal{F}(f;\Omega).
        \end{equation}
        Using $p\leq q$ and $f\in W^{1,q}(\Omega)$, we can estimate
        \[
            \mathcal{F}(f;\Omega)\leq C(1+\|a\|_{L^{\infty}(\Omega)})(\|\nabla f\|_{L^q(\Omega)}^p+\|\nabla f\|_{L^q(\Omega)}^q)
        \]
        and thus
        \begin{equation}
        \label{eq: energy f}
            \begin{split}
                (\mathcal{F}(f;\Omega))^{1/p}&\leq C(1+\|a\|_{L^{\infty}(\Omega)})^{1/p}(\|\nabla f\|_{L^q(\Omega)}+\|\nabla f\|_{L^q(\Omega)}^{q/p})\\
                (\mathcal{F}(f;\Omega))^{1/q}&\leq C(1+\|a\|_{L^{\infty}(\Omega)})^{1/q}(\|\nabla f\|_{L^q(\Omega)}+\|\nabla f\|_{L^q(\Omega)}^{p/q}).
            \end{split}
        \end{equation}
        Hence, from  \eqref{eq: W 1p control}, \eqref{eq: Lqa control}, \eqref{eq: energy f}, \eqref{eq: norm on W 1pq} and $p\leq q$, we get \eqref{eq: energy estimate W 1p} and
        \[
        \begin{split}
            &\|u\|_{W^{1,p,q}_a(\Omega)}\leq C((\mathcal{F}(f;\Omega))^{1/p}+\|f\|_{L^p(\partial\Omega)}+(\mathcal{F}(f;\Omega))^{1/q})\\
            &\leq C(1+\|a\|_{L^{\infty}(\Omega)})^{1/p}(\|\nabla f\|_{L^q(\Omega)}+\|\nabla f\|_{L^q(\Omega)}^{p/q}+\|\nabla f\|_{L^q(\Omega)}^{q/p}+\|f\|_{L^q(\partial\Omega)}).
        \end{split}
        \]
        This establishes \eqref{eq: energy estimate}.

        Next, we show that $u$ is in fact a solution of \eqref{eq: PDE double phase}. By construction of $u$, we know that $u+\eps\varphi=(u-f)+(f+\eps\varphi)\in M_f\cap W^{1,p,q}_a(\Omega)$ for any $\eps>0$ and $\varphi\in W^{1,p,q}_{a,0}(\Omega)$. Thus, the minimality of $u$ implies that $\eps\mapsto \mathcal{F}(u+\eps\varphi;\Omega)$ attains its minimium at $\eps=0$. Using $u,\varphi\in W^{1,p,q}_a(\Omega)$ and the dominated convergence theorem, we may calculate
        \[
        \begin{split}
            0&=\left.\frac{d}{d\eps}\right|_{\eps=0}\mathcal{F}(u+\eps\varphi;\Omega)\\
            &=\left.\frac{d}{d\eps}\right|_{\eps=0}\int_\Omega \left(|\nabla u+\eps\nabla \varphi|^p+\frac{p}{q}a|\nabla u+\eps\nabla \varphi|^q\right)\,dx\\
            &=p\int_\Omega \left(|\nabla u|^{p-2}\nabla u+a|\nabla u|^{q-2}\nabla u\right)\cdot\nabla \varphi\,dx.
        \end{split}
        \]
        Therefore, $u$ is a weak solution of \eqref{eq: PDE double phase} in the sense of \eqref{eq: weak solutions double phase}. 

        Finally, it remains to show that the solution $u$ is unique. This in turn is a consequence of the observation that any solution $w\in W^{1,p,q}_a(\Omega)$ of \eqref{eq: PDE double phase} is a minimizer of $\mathcal{F}(\cdot,\Omega)$ over $M_f$. In fact, if $\Bar{u}\in W^{1,p,q}_a(\Omega)$ is another solution of \eqref{eq: PDE double phase}, then $\Bar{u}$ also minimizes $\mathcal{F}(\cdot;\Omega)$ over $M_f$ and as minimizers of this problem are unique, we can conclude that $\Bar{u}=u$.

        For proving that solutions are minimizers, let us choose any $v\in M_f\cap W^{1,p,q}_a(\Omega)$. Then, using \eqref{eq: minimality} and \eqref{eq: weak solutions double phase}, we deduce that
        \[
        \begin{split}
            \mathcal{F}(v;\Omega)&=\int_\Omega \left(|\nabla v|^p+\frac{p}{q}a|\nabla v|^q\right)\,dx\\
            &\geq \int_\Omega \left(|\nabla w|^p+\frac{p}{q}a|\nabla w|^q\right)\,dx\\
            &\quad+p\int_\Omega \left(|\nabla w|^{p-2}\nabla w+a|\nabla w|^{q-2}\nabla w\right)\cdot\nabla (v-w)\,dx\\
            &=\mathcal{F}(w;\Omega),
        \end{split}
        \]
        where we used that $v-w\in W^{1,p,q}_{a,0}(\Omega)$. On the other hand, if $v\in M_f$ does not belong to $W^{1,p,q}_a(\Omega)$, then $\mathcal{F}(v;\Omega)=\infty$ and the previous estimate holds trivially. Hence, we have shown that any solution $w\in W^{1,p,q}_a(\Omega)$ of \eqref{eq: PDE double phase} is a minimizer of \eqref{eq: Dirichlet double phase}. 

        Hence, in summary, we have demonstrated that the unique minimizer $u$ of \eqref{eq: Dirichlet double phase} is the unique $W^{1,p,q}_a(\Omega)$-solution of \eqref{eq: PDE double phase} and moreover it satisfies the estimate \eqref{eq: energy estimate}.
    \end{proof}

    \subsection{Local minimizers of the double phase functional}
    \label{sec: local minimizer}
    
    The only purpose of this section is to show that solutions to the Dirichlet problem \eqref{eq: PDE double phase} are local minimizers of the double phase functional in the sense of Definition \ref{def: local minimizers}. By combining this with the maximum principle (Theorem \ref{thm: max principle}), one then sees that all (local) regularity results established in the work \cite{Colombo-Mingione-bounded-minimizers-double-phase} can be applied to solutions of \eqref{eq: PDE double phase}, when the Dirichlet datum $f$ is bounded and in $W^{1,q}(\Omega)$.

    \begin{definition}
    \label{def: local minimizers}
            Let $\Omega\subset\R^n$ be a bounded Lipschitz domain, $1<p\leq q<\infty$, $0<\alpha\leq 1$ and $a\in C^{0,\alpha}(\overline{\Omega})$ a nonnegative function. Then, a function $u\in W^{1,1}_{loc}(\Omega)$ is said to be a \emph{local minimizer} of $\mathcal{F}(\cdot;\Omega)$, when $\mathcal{F}(u;\Omega')<\infty$ for all $\Omega'\Subset\Omega$ and there holds
            \begin{equation}
            \label{eq: local minimizers}
                \mathcal{F}(u;\supp(u-v))\leq \mathcal{F}(v;\supp(u-v))
            \end{equation}
            for all $v\in W^{1,1}_{loc}(\Omega)$ such that $\supp(u-v)\subset\Omega$.
    \end{definition}

    \begin{remark}
    \label{remark: support}
        Note that this definition makes sense as $\supp(u-v)\subset\Omega$ is contained in some $\Omega'\Subset\Omega$. In fact, as $\supp(u-v)\subset\Omega$ and $\partial\Omega$ are closed and compact sets, respectively, we have $\dist(\supp(u-v),\partial\Omega)\geq \delta$ for some $\delta>0$ and so one can take $\Omega'=\{x\in\Omega\,;\,\dist(x,\partial\Omega)> \delta/2\}$. Furthermore, let us note that by \eqref{eq: solution space} the first part of Definition \ref{def: local minimizers} means nothing else than $u\in W^{1,p,q}_{a,loc}(\Omega)$.
    \end{remark}

    \begin{lemma}
    \label{lemma: solutions are local minimizers}
         Let $\Omega\subset\R^n$ be a bounded Lipschitz domain, $1<p\leq q<\infty$, $0<\alpha\leq 1$ and $a\in C^{0,\alpha}(\overline{\Omega})$ a nonnegative function. If $u\in W^{1,p,q}_a(\Omega)$ is a minimizer of \eqref{eq: Dirichlet double phase} for some $f\in W^{1,q}(\Omega)$, then it is also a local minimizer in the sense of Definition \ref{def: local minimizers}.
    \end{lemma}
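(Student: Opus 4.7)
The first condition in Definition \ref{def: local minimizers}, namely $\mathcal{F}(u;\Omega')<\infty$ for all $\Omega'\Subset\Omega$, is immediate since $u\in W^{1,p,q}_a(\Omega)$ already gives $\mathcal{F}(u;\Omega)<\infty$. For the comparison inequality \eqref{eq: local minimizers}, let $v\in W^{1,1}_{loc}(\Omega)$ with $K\vcentcolon=\supp(u-v)\Subset\Omega$. If $\mathcal{F}(v;K)=\infty$ the inequality is trivial, so the plan is to reduce to the case $\mathcal{F}(v;K)<\infty$, show that $v$ is an admissible competitor for the global minimization problem \eqref{eq: Dirichlet double phase}, and then apply the minimality of $u$ together with the fact that $u=v$ a.e.\ on $\Omega\setminus K$ to cancel the contributions from outside $K$.

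The main step is to verify that $v\in M_f\cap W^{1,p,q}_a(\Omega)$ under the assumption $\mathcal{F}(v;K)<\infty$. Since $u=v$ a.e.\ on $\Omega\setminus K$, the assumption $u\in W^{1,p,q}_a(\Omega)$ together with $\nabla v\in L^p(K)$ and $a|\nabla v|^q\in L^1(K)$ immediately yields $\nabla v\in L^p(\Omega)$ and $a|\nabla v|^q\in L^1(\Omega)$. To conclude $v\in W^{1,p,q}_a(\Omega)$ and simultaneously $v-u\in W^{1,p}_0(\Omega)$ (which gives $v\in M_f$ since $u-f\in W^{1,p}_0(\Omega)$), I will invoke the following auxiliary fact: any $g\in W^{1,1}_{loc}(\Omega)$ with $\supp g\Subset\Omega$ and $\nabla g\in L^p(\Omega)$ belongs to $W^{1,p}_0(\Omega)$. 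Applied to $g\vcentcolon=v-u$ (whose support is contained in $K$), this gives the required conclusion.

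The auxiliary fact itself is proved by mollification: fix a cutoff $\chi\in C^\infty_c(\Omega)$ with $\chi=1$ on a neighborhood of $\supp g$ and set $g_\eps\vcentcolon=(\chi g)*\eta_\eps$ for a standard mollifier $\eta_\eps$. For small $\eps$, $g_\eps\in C^\infty_c(\Omega)$ and $\nabla g_\eps\to\nabla g$ in $L^p(\Omega)$; the Poincar\'e inequality for $W^{1,p}_0$-functions then shows $\{g_\eps\}$ is Cauchy in $L^p(\Omega)$, so it converges to an element of $W^{1,p}_0(\Omega)$ which must coincide with $g$ since $g_\eps\to g$ in $L^1_{loc}(\Omega)$.

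Once $v\in M_f\cap W^{1,p,q}_a(\Omega)$ is established, Theorem \ref{thm: well-posedness double phase} and the minimality of $u$ give
\[
\mathcal{F}(u;\Omega)\leq \mathcal{F}(v;\Omega).
\]
Splitting the integrals over $K$ and $\Omega\setminus K$ and using $\nabla u=\nabla v$ a.e.\ on $\Omega\setminus K$, the contributions over $\Omega\setminus K$ agree and can be subtracted from both sides (they are finite since $u\in W^{1,p,q}_a(\Omega)$), yielding $\mathcal{F}(u;K)\leq \mathcal{F}(v;K)$, which is exactly \eqref{eq: local minimizers}. The only mildly delicate point is the auxiliary $W^{1,p}_0$ lemma above; everything else is bookkeeping.
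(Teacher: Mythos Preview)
Your proof is correct and follows essentially the same route as the paper: both arguments mollify $\varphi=v-u$, exploit that $\supp\varphi\Subset\Omega$, invoke the global minimality of $u$ over $M_f$, and then cancel the identical contributions on $\Omega\setminus\supp(u-v)$. The only difference is in how the mollification is used: the paper inserts the smooth competitors $u+\varphi_\eps\in M_f$ directly into $\mathcal{F}$ and passes to the limit via dominated convergence, whereas you use the mollification together with Poincar\'e to show $\varphi\in W^{1,p}_0(\Omega)$ first and then apply minimality to $v$ itself---a slightly more direct variant that sidesteps the limit argument in the nonlinear functional.
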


    \begin{proof}
        Let $v\in W^{1,1}_{loc}(\Omega)$ and suppose that $\supp(u-v)\subset\Omega$. Assume first that $v\notin W^{1,p}(\Omega)$, then $\mathcal{F}(v;\Omega)=\infty$ and we have trivially
        \begin{equation}
        \label{eq: minimizer prop}
            \mathcal{F} (u;\Omega)\leq \mathcal{F}(v;\Omega).
        \end{equation}
        Next, we assert that \eqref{eq: minimizer prop} also holds when $v\in W^{1,p}(\Omega)$. In this case, we write $v=u+\varphi$, where $\varphi=v-u\in W^{1,p}(\Omega)$. Moreover, by assumption and Remark \ref{remark: support} we know that $\supp\varphi\subset\Omega'$ for some $\Omega'\Subset \Omega$. Let us denote by $(\rho_\eps)_{\eps>0}\subset C_c^{\infty}(\R^n)$ the standard mollifier and define the functions $\varphi_\eps =\rho_\eps\ast \varphi\in C_c^{\infty}(\Omega)$ for some sufficiently small $\eps>0$. By the properties of mollification, we know that
        \[
            \nabla\varphi_\eps\to \nabla\varphi\text{ for a.e. }x\in\Omega
        \]
        along a suitable subsequence. Moreover, by Young's inequality there holds
        \[
            |\nabla \varphi_\eps(x)|\leq \|\rho_\eps\|_{L^{p'}(\Omega)}\|\nabla\varphi\|_{L^p(\Omega)}.
        \]
        So, by boundedness of $\Omega$, the dominated convergence theorem and the minimality of $u\in M_f$, we get
        \begin{equation}
        \label{eq: almost local min estimate}
            \mathcal{F}(u;\Omega)\leq \limsup_{\eps\to 0}\mathcal{F}(u+\varphi_\eps;\Omega)=\mathcal{F}(v;\Omega).
        \end{equation}
        Hence, also in this case we have \eqref{eq: minimizer prop}. Finally, we may observe
        \[
        \begin{split}
            \mathcal{F}(v;\Omega)&=\mathcal{F}(v;\supp(u-v))+\mathcal{F}(v;\R^n\setminus \supp(u-v))\\
            &=\mathcal{F}(v;\supp(u-v))+\mathcal{F}(u;\R^n\setminus \supp(u-v)).
        \end{split}
        \]
        Inserting this into the right hand side of \eqref{eq: almost local min estimate} and using the same decomposition for the left hand side, we achieve the identity \eqref{eq: local minimizers}. This concludes the proof.
    \end{proof}

  \subsection{Maximum and comparison principle}
  \label{sec: maximum principle}

    Next, let us recall that solutions to the double phase problem \eqref{eq: PDE double phase} satisfy a maximum principle (see~\cite[Lemma 2.2]{Colombo-Mingione-bounded-minimizers-double-phase} and \cite{leonetti1991maximum}):

    \begin{theorem}[{Maximum principle}]
    \label{thm: max principle}
        Let $\Omega\subset\R^n$ be a bounded Lipschitz domain, $1<p\leq q<\infty$ and $a\in L^{\infty}(\Omega)$ a nonnegative function. If $u\in W^{1,p,q}_a(\Omega)$ solves the Dirichlet problem \eqref{eq: PDE double phase} with $f\in L^{\infty}(\Omega)\cap W^{1,q}(\Omega)$, then
        \begin{equation}
        \label{eq: estimate max principle}
            \|u\|_{L^{\infty}(\Omega)}\leq \|f\|_{L^{\infty}(\Omega)}.
        \end{equation}
    \end{theorem}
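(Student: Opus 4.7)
My plan is the classical truncation test-function argument, adapted to the double-phase weak formulation \eqref{eq: weak solutions double phase}. Setting $M := \|f\|_{L^{\infty}(\Omega)}$, I would test the equation against $\varphi_+ := (u - M)_+$ to obtain the upper bound $u \le M$ a.e., and symmetrically against $-(u+M)_-$ for the lower bound.

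The first step is to verify that $\varphi_+$ is admissible, i.e.\ $\varphi_+ \in W^{1,p,q}_{a,0}(\Omega)$. The chain rule for truncations gives $\nabla \varphi_+ = \chi_{\{u > M\}}\nabla u$, so $\varphi_+ \in W^{1,p}(\Omega)$ and $a|\nabla\varphi_+|^q \le a|\nabla u|^q \in L^{1}(\Omega)$. For the vanishing boundary trace: since $u - f \in W^{1,p}_0(\Omega)$, the trace of $u$ equals that of $f$ on $\partial\Omega$, and because $f \le M$ a.e.\ in $\Omega$ the same bound transfers to the boundary trace (for instance by noting that $(f-M)_+ = 0$ a.e.\ in $\Omega$ belongs trivially to $W^{1,q}_0(\Omega)$, hence has zero trace). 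Consequently the trace of $\varphi_+$ vanishes and $\varphi_+ \in W^{1,p,q}_{a,0}(\Omega)$.

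With $\varphi_+$ as test function, substituting into \eqref{eq: weak solutions double phase} and using $\nabla u \cdot \nabla\varphi_+ = |\nabla u|^{2}\chi_{\{u>M\}}$ collapses the equation to
\[
\int_{\{u > M\}} \bigl(|\nabla u|^{p} + a|\nabla u|^{q}\bigr)\,dx = 0.
\]
Both integrands are nonnegative, so $\nabla u = 0$ a.e.\ on $\{u > M\}$, and hence $\nabla\varphi_+ \equiv 0$ a.e.\ in $\Omega$. Since $\varphi_+$ has zero trace, the Poincar\'e inequality \eqref{eq: Poincare} (the boundary term vanishing) then forces $\varphi_+ \equiv 0$, i.e.\ $u \le M$ a.e. The identical argument applied to $-(u+M)_-$ gives $u \ge -M$ a.e., and combining these yields \eqref{eq: estimate max principle}.

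The only real technical point, and thus the main obstacle, is the admissibility check in the second step: one must verify simultaneously the Sobolev integrability of the truncation, the $a$-weighted $L^{q}$ control (immediate from $|\nabla\varphi_+| \le |\nabla u|$), and the zero boundary trace. The last of these is slightly delicate in the absence of a density theorem tailored to $W^{1,p,q}_{a,0}(\Omega)$, but it reduces to the standard transfer of the a.e.\ pointwise bound on $f$ to its trace on $\partial\Omega$. Once admissibility is established, the collapse of the equation to a single nonnegative integral and the Poincar\'e conclusion are essentially one line each.
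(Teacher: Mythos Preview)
Your proposal is correct and is essentially the approach the paper takes: the paper derives Theorem~\ref{thm: max principle} from the comparison principle (Proposition~\ref{prop: comparison}) applied with the constant comparison functions $\pm\|f\|_{L^\infty(\Omega)}$, and that comparison principle is itself proved by testing with the truncation $(u-v)_-$, which for $v\equiv M$ is exactly your $\varphi_+$ (up to sign). The only organizational difference is that you bypass the general comparison step and go straight to the constant case, so the monotonicity estimates~\eqref{eq: monotonicity est} collapse to the trivial $|\nabla u|^{p-2}\nabla u\cdot\nabla u=|\nabla u|^p\ge 0$; the admissibility of the truncation is handled identically in the paper's Lemma~\ref{lemma: weal maximum principle}.
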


   On the one hand, the advantage of this formulation of the maximum principle is that it is also valid in the vector-valued case $N\geq 2$, in the sense that there holds
        \begin{equation}
        \label{eq: max principle vector}
            \|u\|_{L^{\infty}(\Omega)}\leq N^{1/2}\|f\|_{L^{\infty}(\Omega)}.
        \end{equation}
    On the other hand, Theorem \ref{thm: max principle} can be derived in the scalar case by very elementary considerations, namely from a weak comparison principle. As a preparatory step, we demonstrate the following weak maximum principle.

    \begin{lemma}[Weak maximum principle]
    \label{lemma: weal maximum principle}
        Let $\Omega\subset\R^n$ be a bounded Lipschitz domain, $1<p\leq q<\infty$ and $a\in L^{\infty}(\Omega)$ a nonnegative function. If $u\in W^{1,p,q}_a(\Omega)$ satisfies
        \begin{equation}
             \label{eq: PDE double phase weak max}
        \begin{cases}
            -\Div(|\nabla u|^{p-2}\nabla u+a|\nabla u|^{q-2}\nabla u)\geq 0&\text{ in }\Omega,\\
            u\geq 0&\text{ on }\partial\Omega,
        \end{cases}
        \end{equation}
        then $u\geq 0$ in $\Omega$.
    \end{lemma}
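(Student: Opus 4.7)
The natural approach is the standard ``test against the negative part'' argument, but executed in the weighted Sobolev space $W^{1,p,q}_{a,0}(\Omega)$ rather than in $W^{1,p}_0(\Omega)$. The weak reading of \eqref{eq: PDE double phase weak max} is that
\[
\int_{\Omega}\bigl(|\nabla u|^{p-2}\nabla u+a|\nabla u|^{q-2}\nabla u\bigr)\cdot\nabla\varphi\,dx\geq 0
\]
for every nonnegative $\varphi\in W^{1,p,q}_{a,0}(\Omega)$, and the boundary condition $u\geq 0$ on $\partial\Omega$ is understood in the sense of traces. I would take $\varphi=u^-:=\max(-u,0)$ as the test function and show that this forces $u^-\equiv 0$.

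The first task is to justify that $u^-$ is admissible. By the classical chain rule for Sobolev functions, $u^-\in W^{1,p}(\Omega)$ with
\[
\nabla u^-=-\nabla u\,\chi_{\{u<0\}}\quad\text{a.e.\ in }\Omega,
\]
hence $|\nabla u^-|^p\leq|\nabla u|^p$ and $a|\nabla u^-|^q\leq a|\nabla u|^q$ pointwise a.e., so $u^-\in W^{1,p,q}_a(\Omega)$. Since $u\geq 0$ on $\partial\Omega$ in the trace sense, the trace of $u^-$ vanishes, giving $u^-\in W^{1,p}_0(\Omega)\cap W^{1,p,q}_a(\Omega)=W^{1,p,q}_{a,0}(\Omega)$. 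Integrability of the bilinear expression $(|\nabla u|^{p-2}\nabla u+a|\nabla u|^{q-2}\nabla u)\cdot\nabla u^-$ follows from H\"older's inequality applied with the pairs $(p,p')$ for the $p$-term and $(q,q')$ with the weight split as $a=a^{(q-1)/q}\cdot a^{1/q}$ for the $q$-term, both of which are controlled by $\|u\|_{W^{1,p,q}_a(\Omega)}$.

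Plugging $\varphi=u^-$ into the weak inequality and using the formula for $\nabla u^-$ reduces the integral to the set $\{u<0\}$ and flips a sign, producing
\[
-\int_{\{u<0\}}\bigl(|\nabla u|^p+a|\nabla u|^q\bigr)\,dx\geq 0.
\]
Since the integrand is nonnegative (using $a\geq 0$), this forces $|\nabla u|=0$ a.e.\ on $\{u<0\}$, equivalently $\nabla u^-=0$ a.e.\ in $\Omega$. Combining $\nabla u^-=0$ with $u^-\in W^{1,p}_0(\Omega)$ and the Poincar\'e inequality \eqref{eq: Poincare} yields $u^-\equiv 0$, i.e.\ $u\geq 0$ a.e.\ in $\Omega$.

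The only nontrivial point is the justification that $u^-$ belongs to $W^{1,p,q}_{a,0}(\Omega)$ with the expected gradient, since the weighted space is nonstandard; however, because the weight $a$ enters only as an $L^\infty$ multiplier on top of the classical $W^{1,p}$ chain rule, the extension is immediate from the pointwise identity for $\nabla u^-$, and this is what I would treat as the only genuine verification step.
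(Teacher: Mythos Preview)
Your proposal is correct and follows essentially the same approach as the paper: test the weak inequality against $u^-=\max(-u,0)$, use the pointwise identity $\nabla u^-=-\chi_{\{u<0\}}\nabla u$ to reduce to $-\int_\Omega(|\nabla u^-|^p+a|\nabla u^-|^q)\,dx\geq 0$, and conclude via the Poincar\'e inequality that $u^-\equiv 0$. Your write-up in fact includes slightly more justification than the paper (the explicit H\"older argument for integrability of the pairing), but the structure is identical.
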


    The proof is very standard, but for the readers convenience and later usage we present the short argument.

    \begin{proof}
        Let us write $u=u_+- u_{-}$, where $u_{\pm}=\max(\pm u,0)$. It is well-known that $u\in W^{1,p,q}_a(\Omega)$ implies $u_{\pm}\in W^{1,p,q}_a(\Omega)$ with
        \begin{equation}
        \label{eq: gradient of pos neg part}
            \nabla u_{+}=\chi_{\{u>0\} }\nabla u\text{ and }\nabla u_{-}=-\chi_{\{u< 0\}}\nabla u.
        \end{equation}
        Furthermore, $u\geq 0$ on $\partial\Omega$ implies $u_{-}\in W^{1,p,q}_{a,0}(\Omega)$. Also, recall that $u$ is a solution to \eqref{eq: PDE double phase weak max} means nothing else than
         \begin{equation}
             \label{eq: sol of PDE double phase weak max}
            \int_\Omega \left(|\nabla u|^{p-2}\nabla u+a|\nabla u|^{q-2}\nabla u\right)\cdot\nabla \varphi\,dx\geq 0
        \end{equation}
        for all $\varphi\in W^{1,p,q}_{0}(\Omega)$ with $\varphi\geq 0$. In particular, we can take $\varphi=u_{-}$ to get
        \[
            \begin{split}
                0&\leq \int_\Omega \left(|\nabla u|^{p-2}\nabla u+a|\nabla u|^{q-2}\nabla u\right)\cdot\nabla u_{-}\,dx\\
                &\overset{\eqref{eq: gradient of pos neg part}}{=}-\int_\Omega \left(|\nabla u_{-}|^{p}+a|\nabla u_{-}|^{q}\right)\,dx.
            \end{split}
        \]
        By the Poincar\'e inequality \eqref{eq: Poincare}, we get $u_{-}=0$ in $\Omega$ and so the claim follows.
    \end{proof}

    The above technique of proof can be used to prove a comparison principle, which in turn implies the maximum principle as stated in Theorem \ref{thm: max principle} (use $\pm \|f\|_{L^{\infty}(\Omega)}$ as comparing functions). 

    \begin{proposition}[Comparison principle]
    \label{prop: comparison}
        Let $\Omega\subset\R^n$ be a bounded Lipschitz domain, $1<p\leq q<\infty$ and $a\in L^{\infty}(\Omega)$ a nonnegative function. If $u,v\in W^{1,p,q}_a(\Omega)$ solve \eqref{eq: PDE double phase} with boundary values $u_0,v_0\in W^{1,q}(\Omega)$ such that $u_0\geq v_0$ on $\partial\Omega$.
        Then there holds $u\geq v$ in $\Omega$.
    \end{proposition}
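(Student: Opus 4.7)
The plan is to mimic the weak maximum principle argument from Lemma \ref{lemma: weal maximum principle}, but applied to the difference $w = v - u$ and its positive part $w_+ = \max(w,0)$. The idea is that subtracting the weak formulations for $u$ and $v$ and testing against $w_+$ will yield, via the monotonicity inequality \eqref{eq: monotonicity est}, an integral of a nonnegative quantity which must vanish, forcing $\nabla w_+ \equiv 0$ and hence (by Poincar\'e) $w_+ \equiv 0$.

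First, I would check that $w_+$ is an admissible test function. Since $u,v \in W^{1,p,q}_a(\Omega)$, we have $w \in W^{1,p,q}_a(\Omega)$, and the standard chain rule \eqref{eq: gradient of pos neg part} gives $w_+ \in W^{1,p,q}_a(\Omega)$ with $\nabla w_+ = \chi_{\{w>0\}}\nabla w$. Because $v_0 \leq u_0$ on $\partial\Omega$ in the sense of traces, $w \leq 0$ on $\partial\Omega$, so $w_+ = 0$ on $\partial\Omega$ and therefore $w_+ \in W^{1,p,q}_{a,0}(\Omega)$. Subtracting \eqref{eq: weak solutions double phase} for $u$ and $v$ tested against $\varphi = w_+$ yields
\begin{equation}
\int_\Omega \bigl(|\nabla v|^{p-2}\nabla v - |\nabla u|^{p-2}\nabla u\bigr)\cdot\nabla w_+\,dx + \int_\Omega a\bigl(|\nabla v|^{q-2}\nabla v - |\nabla u|^{q-2}\nabla u\bigr)\cdot\nabla w_+\,dx = 0.
\end{equation}

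On the set $\{w>0\}$, where $\nabla w_+ = \nabla(v-u)$, each of the two integrands is pointwise nonnegative by the monotonicity inequality \eqref{eq: monotonicity est} (applied with $r=p$ and $r=q$, and using $a\geq 0$ for the second term). Hence both integrals are individually nonnegative, and since their sum vanishes, each vanishes pointwise a.e.\ on $\{w>0\}$. Splitting into the two cases of \eqref{eq: monotonicity est}: if $p\geq 2$, the $p$-integrand controls $|\nabla w|^p\chi_{\{w>0\}}$ from above up to a constant, so $\nabla w_+ = 0$ a.e.; if $1<p<2$, it controls $\frac{|\nabla w|^2}{(|\nabla u|+|\nabla v|)^{2-p}}\chi_{\{w>0\}}$, from which $\nabla w = 0$ a.e.\ on $\{w>0\}$ and therefore again $\nabla w_+ = 0$ a.e.\ in $\Omega$.

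Combining $\nabla w_+ = 0$ a.e.\ with $w_+ \in W^{1,p}_0(\Omega)$, the Poincar\'e inequality \eqref{eq: Poincare} (with vanishing boundary integral) gives $w_+ = 0$ a.e.\ in $\Omega$, which is precisely $v \leq u$ in $\Omega$. The only mildly delicate point is the case $1<p<2$, where the monotonicity bound degenerates at points where both $\nabla u$ and $\nabla v$ vanish; but on such a set $\nabla w = \nabla v - \nabla u = 0$ trivially, so the conclusion $\nabla w_+ = 0$ a.e.\ still holds without modification. No further ingredients beyond \eqref{eq: monotonicity est}, \eqref{eq: gradient of pos neg part} and \eqref{eq: Poincare} are needed, so I do not anticipate a genuine obstacle beyond bookkeeping of these two cases.
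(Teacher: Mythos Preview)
Your proposal is correct and follows essentially the same route as the paper: test the difference of the two weak formulations against $(u-v)_- = (v-u)_+$, use the monotonicity estimate \eqref{eq: monotonicity est} together with $a\geq 0$ to see that both integrals are nonnegative and hence each vanishes, and conclude via Poincar\'e. The only notable difference is in the case $1<p<2$: the paper bounds $\int_\Omega|\nabla(u-v)_-|^p\,dx$ by a H\"older-inequality argument, whereas you argue more directly that pointwise vanishing of the nonnegative integrand forces $\nabla w=0$ a.e.\ on $\{w>0\}$ (with the degenerate set $\{\nabla u=\nabla v=0\}$ handled trivially). Your pointwise argument is slightly cleaner here; the paper's H\"older route is more in the spirit of deriving quantitative $L^p$ control, but for the present statement either suffices.
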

    \begin{proof}
        Similarly as in the proof of Lemma \ref{lemma: weal maximum principle}, we may test the PDEs for $u$ and $v$, respectively, by $\varphi=(u-v)_{-}\in W^{1,p,q}_{a,0}(\Omega)$. After subtracting the resulting identities and applying Lemma \ref{auxialiary lemma}, we get
        \begin{equation}
        \label{eq: first identity comparison}
         \begin{split}
             0&=-\int_\Omega \left( |\nabla u|^{p-2}\nabla u-|\nabla v|^{p-2}\nabla v\right) \cdot \nabla (u- v)_{-}\,dx\\
            &\quad -\int_\Omega a\left(|\nabla u|^{q-2}\nabla u-|\nabla v|^{q-2}\nabla v\right)\cdot\nabla (u-v)_{-}\,dx\\
            &=\int_\Omega \chi_{\{u<v\}}\left( |\nabla u|^{p-2}\nabla u-|\nabla v|^{p-2}\nabla v\right) \cdot \nabla (u- v)\,dx\\
            &\quad +\int_\Omega\chi_{\{u<v\}} a\left(|\nabla u|^{q-2}\nabla u-|\nabla v|^{q-2}\nabla v\right)\cdot\nabla (u-v)\,dx\\
            &\geq \int_\Omega \chi_{\{u<v\}}\left( |\nabla u|^{p-2}\nabla u-|\nabla v|^{p-2}\nabla v\right) \cdot \nabla (u- v)\,dx.
         \end{split}
        \end{equation}
        For $p\geq 2$, we deduce from formula \eqref{eq: monotonicity est} the estimate
        \[
            0\gtrsim \int_\Omega \chi_{\{u<v\}}|\nabla u-\nabla v|^p\,dx=\int_\Omega|\nabla (u-v)_{-}|^p\,dx.
        \]
         Hence, from the Poincar\'e inequality and $(u-v)_{-}=(u_0-v_0)_{-}=0$ on $\partial\Omega$, we get $(u-v)_{-}=0$ in $\Omega$, which is equivalent to $u\geq v$ in $\Omega$.

         For $p\leq 2$, we may observe that \eqref{eq: monotonicity est} implies
         \[
            |x-y|^p\lesssim [\left( |x|^{p-2}x-|y|^{p-2}y\right) \cdot (x-y)]^{p/2}(|x|+|y|)^{(2-p)p/2}
        \]
        for all $x,y\in\R^n$. Using H\"older's inequality with $\frac{2-p}{2}+\frac{p}{2}=1$, we have
        \[
        \begin{split}
            &\int_\Omega |\nabla (u-v)_{-}|^p\,dx=\int_\Omega \chi_{\{u<v\}}|\nabla (u-v)|^p\,dx\\
            &\lesssim \int_\Omega \chi_{\{u<v\}}[\left( |\nabla u|^{p-2}\nabla u-|\nabla v|^{p-2}\nabla v\right) \cdot \nabla (u-v)](|\nabla u|+|\nabla v|)^{(2-p)p/2}\,dx \\
            &\lesssim \||\nabla u|+|\nabla v|\|_{L^p(\Omega)}^{\frac{p(2-p)}{2}}\left(\int_\Omega \chi_{\{u<v\}} \left( |\nabla u|^{p-2}\nabla u-|\nabla v|^{p-2}\nabla v\right) \cdot (\nabla (u- v))\,dx\right)^{p/2}\\
            &\lesssim \||\nabla u|+|\nabla v|\|_{L^p(\Omega)}^{\frac{p(2-p)}{2}}\left(\int_\Omega \chi_{\{u<v\}} \left( |\nabla u|^{p-2}\nabla u-|\nabla v|^{p-2}\nabla v\right) \cdot \nabla (u-v)\,dx\right.\\
            &\quad \left.+\int_\Omega \chi_{\{u<v\}} a\left(|\nabla u|^{q-2}\nabla u-|\nabla v|^{q-2}\nabla v\right)\cdot\nabla (u-v)\,dx\right)^{p/2}.
        \end{split}
        \]
        By \eqref{eq: first identity comparison} the expression in the last line is identically zero and so we can again conclude from Poincar\'e's inequality and $u_0\geq v_0$ on $\partial\Omega$ that $u\geq v$ in $\Omega$. This concludes the proof.
    \end{proof}

\section{Asymptotic expansion of solutions to the double phase}
\label{sec: asymptotics for small dirichlet data}

In this section, we consider the asymptotic behaviour of solutions to the double phase problem \eqref{eq: PDE double phase}, when the boundary values $f$ become very small ($p<q$) or very large ($p>q$).

\subsection{Asymptotic expansion of solutions for \texorpdfstring{$p<q$}{p smaller q}}
\label{subsec: Asymptotic expansion of solutions to the double phase equation with small Dirichlet datum}

Here, we prove that for any smooth $p$-harmonic function $v$ without critical points, the solution $u_\eps$ to 
    \begin{equation}
    \label{eq: epsilon PDE}
    \begin{cases}
            \Div(|\nabla u|^{p-2}\nabla u+a|\nabla u|^{q-2}\nabla u)=0&\text{ in }\Omega,\\
            u=\eps v&\text{ on }\partial\Omega,
        \end{cases}
    \end{equation}
    provided by Theorem \ref{thm: well-posedness double phase}, can be expanded as
    \begin{equation}
    \label{eq: asymptotics u eps}
        u_\eps=\eps v+\eps^{1+q-p}R_v+o(\eps^{1+q-p})
    \end{equation}
    for a suitable function $R_v$ and sufficiently small $\eps>0$. Before establishing the asymptotic expansion \eqref{eq: asymptotics u eps}, we need to demonstrate some preliminary H\"older estimates in $C^{0,\beta}(\overline{\Omega})$ and to shorten the notation we will sometimes omit the dependence on the domain $\Omega$ in the appearing norms and write, for example, $\|\cdot\|_{C^{0,\beta}}$ and $\|\cdot\|_{L^{\infty}}$ instead of $\|\cdot\|_{C^{0,\beta}(\overline{\Omega})}$ and $\|\cdot\|_{L^{\infty}(\Omega)}$ . 

    \begin{lemma}
    \label{auxiliary lemma 2}
        Let $\Omega\subset\R^n$ be a smoothly bounded domain, $0<\beta<1$ and $s\in\R\setminus \{0\}$. Moreover, assume that $u\in C^{1,\beta}(\overline{\Omega})$ satisfies
        \begin{equation}
        \label{eq: lower bound on gradient of u}
            \inf_{x\in\overline{\Omega}}|\nabla u(x)|\geq \lambda_1
        \end{equation}
        for some $\lambda_1>0$. 
        \begin{enumerate}[(i)]
            \item\label{large s} If $s\geq 1$, then there holds
            \begin{equation}
            \label{eq: estimate large s}
                \||\nabla u|^s\|_{C^{0,\beta}}\leq \|\nabla u\|_{L^{\infty}}^s+2s\|\nabla u\|_{C^{0,\beta}}^s.
            \end{equation}
            \item\label{small s} If $0<s<1$, then there holds
            \begin{equation}
            \label{eq: estimate small s}
                \||\nabla u|^s\|_{C^{0,\beta}}\leq \|\nabla u\|_{L^{\infty}}^s+\frac{2s}{\prod_{j=1}^{k_s}\lambda_1^{2^{j-1}s}}\|\nabla u\|_{C^{0,\beta}}^{2^{k_s}s}
            \end{equation}
            where $k_s\in\N$ is given by
            \begin{equation}
            \label{eq: k s}
                k_s=\min\{k\in\N\,;\,k\geq |\log s|/\log 2\}.
            \end{equation}
            \item\label{small neg s} If $-1<s<0$, then there holds
            \begin{equation}
            \label{eq: estimate small neg s}
                \||\nabla u|^s\|_{C^{0,\beta}}\leq \lambda_1^s+\frac{2|s|}{\lambda_1^{2|s|}\prod_{j=1}^{k_{|s|}}\lambda_1^{2^{j-1}s}}\|\nabla u\|_{C^{0,\beta}}^{2^{k_{|s|}}|s|}
            \end{equation}
            where $k_{|s|}>0$ is again the constant from \eqref{eq: k s}.
            \item\label{large neg s} If $s\leq -1$, then there holds
            \begin{equation}
            \label{eq: estimate large neg s}
                 \||\nabla u|^s\|_{C^{0,\beta}}\leq \lambda_1^s+2|s|\lambda_1^{2s}\|\nabla u\|_{C^{0,\beta}}^{|s|}.
            \end{equation}
        \end{enumerate}
    \end{lemma}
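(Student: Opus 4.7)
The plan is to reduce all four cases to two elementary scalar inequalities: for $r\geq 1$ and $a,b>0$,
\[
|a^r-b^r|\leq r\max(a,b)^{r-1}|a-b|,
\]
the square-root bound $|\sqrt{a}-\sqrt{b}|\leq |a-b|/(2\sqrt{\min(a,b)})$ for $a,b>0$, and the triangle inequality $\bigl||\nabla u(x)|-|\nabla u(y)|\bigr|\leq |\nabla u(x)-\nabla u(y)|$, which transfers H\"older control from the vector field $\nabla u$ to the scalar function $|\nabla u|$. Case (i) is then immediate: applying the first inequality with $r=s$ to $a=|\nabla u(x)|$, $b=|\nabla u(y)|$, bounding $\max(a,b)^{s-1}\leq \|\nabla u\|_{L^\infty}^{s-1}\leq \|\nabla u\|_{C^{0,\beta}}^{s-1}$, and adding $\||\nabla u|^s\|_{L^\infty}=\|\nabla u\|_{L^\infty}^s$ gives \eqref{eq: estimate large s}.

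Case (ii) is the technical heart of the lemma. The idea is to first raise the exponent above $1$ so that case (i) applies, and then extract square roots iteratively. With $s_0:=2^{k_s}s\geq 1$ (which is the very reason for the choice of $k_s$), set $F_j := |\nabla u|^{s_0/2^j}$ for $j=0,\ldots,k_s$, so that $F_0=|\nabla u|^{s_0}$, $F_{k_s}=|\nabla u|^s$, $F_{j+1}=F_j^{1/2}$, and $F_j\geq \lambda_1^{s_0/2^j}$ pointwise. Case (i) applied with exponent $s_0$ gives $[F_0]_{C^{0,\beta}}\leq s_0\|\nabla u\|_{C^{0,\beta}}^{s_0}$, while the square-root bound applied at each step yields
\[
[F_{j+1}]_{C^{0,\beta}}\leq \frac{1}{2\lambda_1^{s_0/2^{j+1}}}[F_j]_{C^{0,\beta}}.
\]
Multiplying the $k_s$ resulting factors and reindexing the exponents $s_0/2^{j+1}$ for $j=0,\ldots,k_s-1$ as $2^{i-1}s$ for $i=1,\ldots,k_s$ collapses the denominator to $2^{k_s}\prod_{j=1}^{k_s}\lambda_1^{2^{j-1}s}$; the numerator factor $s_0=2^{k_s}s$ cancels one power of $2^{k_s}$, and adding $\||\nabla u|^s\|_{L^\infty}=\|\nabla u\|_{L^\infty}^s$ yields \eqref{eq: estimate small s}.

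Cases (iii) and (iv) reduce to a positive exponent through the identity $|\nabla u|^s=1/|\nabla u|^{|s|}$ combined with $|u^{-1}-v^{-1}|=|u-v|/(uv)\leq \lambda_1^{-2|s|}|u-v|$, valid for $u,v\geq \lambda_1^{|s|}$. For case (iv), since $|s|\geq 1$, the H\"older seminorm of $|\nabla u|^{|s|}$ is handled by case (i) and produces the factor $|s|\lambda_1^{2s}$ appearing in \eqref{eq: estimate large neg s}; for case (iii), since $|s|\in(0,1)$, it is handled by case (ii), and the extra $\lambda_1^{-2|s|}$ from the reciprocal bound combines with the constant from case (ii) to produce the full prefactor in \eqref{eq: estimate small neg s}. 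In both cases $\||\nabla u|^s\|_{L^\infty}\leq \lambda_1^s$ follows directly from \eqref{eq: lower bound on gradient of u}. The main obstacle is the bookkeeping in case (ii): verifying that the $k_s$-fold product of the factors $(2\lambda_1^{s_0/2^{j+1}})^{-1}$ matches $\bigl(2^{k_s}\prod_{j=1}^{k_s}\lambda_1^{2^{j-1}s}\bigr)^{-1}$ after the reindexing; once this geometric-series identity is in hand, cases (iii) and (iv) follow as essentially one-line corollaries.
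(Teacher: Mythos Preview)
Your proposal is correct and follows essentially the same route as the paper: the reduction of (iii)--(iv) to (i)--(ii) via the reciprocal bound is identical, case (i) uses the same mean-value-type inequality (the paper's form $|a^r-b^r|\leq r(a^{r-1}+b^{r-1})|a-b|$ versus your $r\max(a,b)^{r-1}|a-b|$ differ only by a factor of $2$), and your top-down iteration in case (ii) via $|\sqrt{a}-\sqrt{b}|\leq |a-b|/(2\sqrt{\min(a,b)})$ is exactly the paper's bottom-up conjugate step $|a^s-b^s|\leq |a^{2s}-b^{2s}|/(2\lambda_1^s)$ read in reverse. The bookkeeping of the product $\prod_{j=1}^{k_s}\lambda_1^{2^{j-1}s}$ matches after your reindexing, and you in fact obtain the slightly sharper constant $s$ instead of $2s$ in (ii).
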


    \begin{proof}
        Let us note that it is enough to prove \ref{large s}--\ref{small s} as the others are consequences of these. In fact, if we set $t\vcentcolon = -s$ for $s<0$, then we may estimate
        \[
        \begin{split}
             ||\nabla u(x)|^{-t}-|\nabla u(y)|^{-t}|&=\frac{||\nabla u(x)|^{t}-|\nabla u(y)|^{t}|}{|\nabla u(x)|^t |\nabla u(y)|^t}\\
             &\leq \lambda_1^{-2t}||\nabla u(x)|^{t}-|\nabla u(y)|^{t}|\\
             &=\lambda_1^{2s}||\nabla u(x)|^{t}-|\nabla u(y)|^{t}|
        \end{split}
        \]
        and hence
        \[
            [|\nabla u|^{-t}]_{C^{0,\beta}}=\lambda_1^{2s}[|\nabla u|^t]_{C^{0,\beta}}.
        \]
        Using this, it is easy to see that the estimate \eqref{eq: estimate small neg s} can be deduced from \eqref{eq: estimate small s} and the bound  \eqref{eq: estimate large neg s} from \eqref{eq: estimate large s}.

        The estimate \eqref{eq: estimate large s} follows from the triangle inequality for $s=1$ and \eqref{eq: difference rth power} for $s>1$, respectively.

        Finally, to get the bound \eqref{eq: estimate small s} we calculate
        \begin{equation}
        \label{eq: iterated estimate}
        \begin{split}
            ||\nabla u(x)|^s-|\nabla u(y)|^s|&\leq \frac{||\nabla u(x)|^{2s}-|\nabla u(y)|^{2s}|}{|\nabla u(x)|^s+|\nabla u(y)|^s}\\
            &\leq \frac{||\nabla u(x)|^{2s}-|\nabla u(y)|^{2s}|}{2\lambda_1^{s}}\\
            &\leq \frac{||\nabla u(x)|^{4s}-|\nabla u(y)|^{4s}|}{2^2\lambda_1^{s}\lambda_1^{2s}}\\
            &\leq \ldots\\
            &\leq \frac{||\nabla u(x)|^{2^k s}-|\nabla u(y)|^{2^k s}|}{2^k\prod_{j=1}^{k}\lambda_1^{2^{j-1}s}}
        \end{split}
        \end{equation}
        for any $k\in\N$. If we apply \eqref{eq: iterated estimate} with $k=k_s$ (see~\eqref{eq: k s}), which guarantees that $2^ks \geq 1$, and \eqref{eq: difference rth power}, we deduce 
        \[
        \begin{split}
             [|\nabla u|^s]_{C^{0,\beta}}&\leq \frac{2s}{\prod_{j=1}^{k_s}\lambda_1^{2^{j-1}s}}\|\nabla u\|_{L^{\infty}}^{2^{k_s}-1}[\nabla u]_{C^{0,\beta}}\leq \frac{2s}{\prod_{j=1}^{k_s}\lambda_1^{2^{j-1}s}}\|\nabla u\|_{C^{0,\beta}}^{2^{k_s}}.
        \end{split}
        \]
        So, combining this with a trivial bound for the $L^{\infty}$ norm of $|\nabla u|^s$, we get the estimate \eqref{eq: estimate small neg s} and this finishes the proof of Lemma \ref{auxiliary lemma 2}.
    \end{proof}
    
    Next, let us introduce for all $1< r<\infty$ the functions $J^r=(J^r_1,\ldots,J^r_n)\colon \R^n\to \R^n$ by
\begin{equation}
\label{eq: nonlinearity}
    J^r(\xi)=|\xi|^{r-2}\xi.
\end{equation}
A straightforward calculation shows that the Jacobian matrix of $J^r$ is given by
\begin{equation}
\label{eq: derivatives of Jj}
    \nabla_{\xi} J^r(\xi)=|\xi|^{r-2}\left(\mathbf{1}+(r-2)\frac{\xi\otimes \xi}{|\xi|^2}\right)
\end{equation}
for all $\xi\neq 0$. Here, $\mathbf{1}$ denotes the $n\times n$ unit matrix and for all $\eta,\zeta\in\R^n$ the matrix $\eta\otimes \zeta$ has components $(\eta\otimes \zeta)_{ij}=\eta_i\zeta_j$. Using this notation, we can formulate the asymptotic behaviour of $u_\eps$, solving the problem \eqref{eq: epsilon PDE}, as follows:

\begin{proposition}
\label{prop: asymptotic expansion}
    Let $\Omega\subset\R^n$ be a smoothly bounded domain, $1<p<q<\infty$, $0<\alpha\leq 1$, $0<\gamma<\beta<1$ with $\beta \leq \alpha$ and assume that $a\in C^{1,\alpha}(\overline{\Omega})$ is nonnegative. Moreover, suppose that $v\in C^{\infty}(\overline{\Omega})$ is a $p$-harmonic function without critical points. Then the unique solution $u_\eps$, $\eps>0$, to \eqref{eq: epsilon PDE} has the asymptotic expansion
    \begin{equation}
    \label{eq: asymptotic expansion}
         u_\eps=\eps v+\eps^{1+q-p}R_v+o(\eps^{1+q-p})
    \end{equation}
    as $\eps\to 0$ in the sense of $C^{2,\gamma}(\overline{\Omega})$. Here, $R_v\in C^{2,\alpha}(\overline{\Omega})$ denotes the unique solution to 
    \begin{equation}
\label{eq: PDE for R0}
    \begin{cases}
            \Div (A_v^p\nabla R)=-\Div(a|\nabla v|^{q-2}\nabla v)&\text{ in }\Omega,\\
            R=0&\text{ on }\partial\Omega,
        \end{cases}
    \end{equation}
    where $A_v^p=\nabla_{\xi} J^p(\nabla v)$ is uniformly elliptic
    (see~\eqref{eq: derivatives of Jj} and \eqref{eq: coeff p and p,q phase}).
\end{proposition}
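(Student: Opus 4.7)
The idea is to subtract the first two terms of the expected expansion and control the remainder at the correct scale. Set $w_\eps := \eps^{-(1+q-p)}(u_\eps - \eps v)$, so the goal reduces to $w_\eps \to R_v$ in $C^{2,\gamma}(\overline{\Omega})$. Substituting $u_\eps = \eps v + \eps^{1+q-p} w_\eps$ into \eqref{eq: epsilon PDE} and using the homogeneity $J^r(\lambda\xi) = \lambda^{r-1} J^r(\xi)$ for $\lambda > 0$, the equation rescales (after dividing by $\eps^{p-1}$) to
\[
\Div\bigl[J^p(\nabla v + \eps^{q-p}\nabla w_\eps) + \eps^{q-p} a\, J^q(\nabla v + \eps^{q-p}\nabla w_\eps)\bigr] = 0,
\]
with $w_\eps = 0$ on $\partial\Omega$. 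Applying the Taylor identity $J^p(\nabla v + \eta) - J^p(\nabla v) = \bigl(\int_0^1 \nabla_\xi J^p(\nabla v + s\eta)\,ds\bigr)\eta$ with $\eta = \eps^{q-p}\nabla w_\eps$ and using the $p$-harmonicity of $v$ (so $\Div J^p(\nabla v) = 0$), this rearranges as
\[
\Div\bigl[\wt A_v^p(x,\eps)\nabla w_\eps\bigr] = -\Div\bigl[a\, J^q(\nabla v + \eps^{q-p}\nabla w_\eps)\bigr],
\]
where $\wt A_v^p(x,\eps) := \int_0^1 \nabla_\xi J^p(\nabla v + s\eps^{q-p}\nabla w_\eps)\,ds$ converges to $A_v^p$ as $\eps \to 0$.

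The key preliminary step is to show $\tilde u_\eps := u_\eps/\eps \to v$ in $C^{1,\beta}(\overline{\Omega})$, which then guarantees $|\nabla u_\eps| \geq \eps\lambda_1/2$ for small $\eps$ (with $\lambda_1 := \min_{\overline\Omega}|\nabla v| > 0$) and makes the linearization meaningful. The rescaled solution satisfies the perturbed $p$-Laplace equation $\Div[J^p(\nabla\tilde u_\eps) + \eps^{q-p} a J^q(\nabla\tilde u_\eps)] = 0$ with fixed boundary value $v$; combining the energy estimate \eqref{eq: energy estimate W 1p} applied to this rescaled equation, the regularity theory for double phase minimizers in \cite{Colombo-Mingione-bounded-minimizers-double-phase}, and boundary estimates for uniformly elliptic quasilinear equations, one extracts compactness in $C^{1,\beta}(\overline\Omega)$ and identifies any limit as the unique $p$-harmonic extension of $v$, namely $v$ itself. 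Once $|\nabla u_\eps|$ is bounded below, Lemma \ref{auxiliary lemma 2} applied to $|\nabla v + s\eps^{q-p}\nabla w_\eps|^{p-2}$ yields uniform $C^{0,\beta}$ control of $\wt A_v^p(x,\eps)$ and uniform ellipticity.

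With the coefficient matrix uniformly elliptic and Hölder-continuous uniformly in $\eps$, Schauder estimates applied to the equation for $w_\eps$ yield a uniform bound $\|w_\eps\|_{C^{2,\beta}(\overline\Omega)} \leq C$, the right-hand side being bounded in $C^{0,\alpha}$ thanks to $a \in C^{1,\alpha}$, $v \in C^\infty$, and the $C^{0,\beta}$ control of $J^q(\nabla v + \eps^{q-p}\nabla w_\eps)$ supplied again by Lemma \ref{auxiliary lemma 2}. Arzelà--Ascoli then extracts a subsequence $w_{\eps_k} \to w$ in $C^{2,\gamma}$ for any $\gamma < \beta$; passing to the limit, $\wt A_v^p(x,\eps_k) \to A_v^p$ and $J^q(\nabla v + \eps_k^{q-p}\nabla w_{\eps_k}) \to J^q(\nabla v)$ uniformly, so $w$ solves \eqref{eq: PDE for R0} and hence $w = R_v$ by unique solvability of this linear Dirichlet problem. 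Uniqueness of the limit forces convergence of the full family, yielding \eqref{eq: asymptotic expansion}.

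The principal technical obstacle is the uniform $C^{1,\beta}(\overline\Omega)$ convergence of $\tilde u_\eps$ to $v$ up to the boundary, since the regularity results in \cite{Colombo-Mingione-bounded-minimizers-double-phase} are interior in nature. The non-degeneracy hypothesis $|\nabla v| \geq \lambda_1$ is indispensable here: it converts the formally degenerate $p$-Laplacian into a uniformly elliptic linear operator in a neighbourhood of the configuration $\eps\nabla v$, so that standard boundary Schauder theory (combined with a quantitative stability argument for the perturbed $p$-Laplace equation) can be brought to bear to upgrade interior compactness to compactness on $\overline\Omega$.
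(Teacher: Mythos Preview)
Your overall scheme---derive an equation for the rescaled remainder, obtain uniform $C^{2,\beta}$ bounds, pass to the limit via Arzel\`a--Ascoli and identify the limit as $R_v$---matches the paper. The difference, and the gap, lies in how you obtain the uniform bounds.

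Your ``key preliminary step'' (showing $\tilde u_\eps \to v$ in $C^{1,\beta}(\overline{\Omega})$) is where the argument is incomplete, and you yourself flag it as the principal obstacle. The reasoning is circular: to invoke boundary Schauder theory for a uniformly elliptic operator you need $|\nabla \tilde u_\eps|$ bounded away from zero, but that is precisely the $C^1$-closeness to $v$ you are trying to establish. The appeal to \cite{Colombo-Mingione-bounded-minimizers-double-phase} is interior only and moreover imposes the restriction $q\leq p+\alpha$, which Proposition~\ref{prop: asymptotic expansion} does \emph{not} assume. The vague ``quantitative stability argument'' does not close this gap.

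The paper sidesteps the problem entirely. Rather than bootstrapping the regularity of the given variational solution $u_\eps$, it \emph{constructs} a $C^{2,\beta}$ solution directly by a Schauder fixed-point argument. Writing $u_\eps = \eps(v+w_\eps)$, the function $w_\eps$ solves a quasilinear Dirichlet problem \eqref{eq: PDE for w eps}. One defines $\mathcal{T}_\eps\colon C^{2,\beta}_{\lambda_0}(\overline{\Omega})\to C^{2,\beta}_{\lambda_0}(\overline{\Omega})$ by freezing the coefficient at $V$ and solving the resulting \emph{linear} problem; on the ball $\{\|V\|_{C^{2,\beta}}\leq \lambda_0/2\}$ the non-degeneracy $|\nabla v + t\nabla V|\geq \lambda_0/2$ is automatic, so Claim~\ref{claim: properties of A eps V} gives uniform ellipticity and $C^{0,\beta}$ control of $A_\eps(V)$ and its divergence with no circularity. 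Linear Schauder estimates then yield $\|\mathcal{T}_\eps(V)\|_{C^{2,\beta}}\leq C_1\eps^{q-p}$, so for small $\eps$ the map lands back in the ball, and Schauder's fixed-point theorem produces $W_\eps\in C^{2,\beta}_{\lambda_0}$. Finally---and this is the key step you are missing---uniqueness of $W^{1,p,q}_a$-solutions to \eqref{eq: epsilon PDE} (Theorem~\ref{thm: well-posedness double phase}) forces $W_\eps = w_\eps$, so the variational solution inherits the $C^{2,\beta}$ regularity and the bound $\|w_\eps\|_{C^{2,\beta}}\leq C_1\eps^{q-p}$ for free. No a priori regularity of $u_\eps$, no Colombo--Mingione, and no restriction on $q-p$ are needed.
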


\begin{proof}
For any $\eps>0$, let $u_\eps\in W^{1,p,q}_a(\Omega)$ be the unique solution to \eqref{eq: epsilon PDE} (see~Theorem \ref{thm: well-posedness double phase}). Moreover, let us define the functions  $w_\eps\in W^{1,p,q}_{a,0}(\Omega)$ by the Ansatz
\begin{equation}
\label{eq: expansion}
    u_\eps=\eps (v+w_\eps).
\end{equation}
From \eqref{eq: estimate mikko} we deduce that $J^r$ is locally Lipschitz continuous and hence applying the fundamental theorem of calculus to the Lipschitz function $t\mapsto J^r(\xi+t(\zeta-\xi))$ yields the identity
\begin{equation}
\label{Taylor}
    J^r(\zeta)=J^r(\xi)+\int_0^1\nabla_\xi J^r(\xi+t(\zeta-\xi))\,d t\,(\zeta-\xi)
\end{equation}
for all $\zeta,\xi\in\R^n$. Using the expansion \eqref{eq: expansion}, formula \eqref{Taylor} with $\zeta=\nabla u_\eps$, $\xi=\eps\nabla v$ and the $r-2$ homogenity of $\nabla_\xi J^r$ (see~\eqref{eq: derivatives of Jj}), we may expand the $p$ phase and $(p,q)$ phase contributions as 
\begin{equation}
\label{eq: p phase expansion}
\begin{split}
    \Div(|\nabla u_\eps|^{p-2}\nabla u_\eps)&=\eps^{p-1}\Div(|\nabla v|^{p-2}\nabla v)
+\eps^{p-1}\Div (A^p(w_\eps)\nabla w_\eps)\\
    &=\eps^{p-1}\Div (A^p(w_\eps)\nabla w_\eps),
\end{split}
\end{equation}
and 
\begin{equation}
\label{eq: p,q phase expansion}
    \Div(a|\nabla u_\eps|^{q-2}\nabla u_\eps)=\eps^{q-1}\Div(a|\nabla v|^{q-2}\nabla v)
    +\eps^{q-1}\Div (a A^q(w_\eps)\nabla w_\eps),
\end{equation}
where we set
\begin{equation}
\label{eq: coeff p and p,q phase}
    A^r(w_\eps)\vcentcolon =\int_0^1\nabla_{\xi} J^r\left(\nabla v+t\nabla w_\eps\right)\,dt
\end{equation}
for $1<r<\infty$. In \eqref{eq: p phase expansion}, we have also used the fact that $v$ is $p$-harmonic. By using the notation
\begin{equation}
\label{eq: def A eps}
    A_\eps(w_\eps)\vcentcolon =A^p(w_\eps)+\eps^{q-p}a A^q(w_\eps),
\end{equation}
we deduce from \eqref{eq: p phase expansion}, \eqref{eq: p,q phase expansion} and \eqref{eq: epsilon PDE} that $w_\eps\in  W^{1,p,q}_{a,0}(\Omega)$ solves
\begin{equation}
\label{eq: PDE for w eps}
    \begin{cases}
            \Div (A_\eps(w_\eps)\nabla w_\eps)=-\eps^{q-p}\Div(a|\nabla v|^{q-2}\nabla v)&\text{ in }\Omega,\\
            w_\eps=0&\text{ on }\partial\Omega.
        \end{cases}
\end{equation}
The reverse is also true, in that any solution $W_\eps\in  W^{1,p,q}_{a,0}(\Omega)$ of \eqref{eq: PDE for w eps} induces through the formulas \eqref{eq: expansion} and \eqref{Taylor} a solution $U_\eps\in  W^{1,p,q}_{a}(\Omega)$ to \eqref{eq: epsilon PDE}. Hence, there is a one-to-one correspondence between solutions to \eqref{eq: PDE for w eps} and solutions to the double phase problem \eqref{eq: epsilon PDE}. In particular, Theorem \ref{thm: well-posedness double phase} ensures that solutions $W_\eps\in W^{1,p,q}_{a,0}(\Omega)$ to \eqref{eq: PDE for w eps} are unique.

Next, let us fix exponents $0<\gamma<\beta <1$ with $\beta\leq \alpha$ as in the statement, introduce the quantity
\begin{equation}
\label{eq: def of lambda0}
    \lambda_0\vcentcolon =\inf_{x\in \overline{\Omega}}|\nabla v(x)|>0
\end{equation}
and define the convex set
\begin{equation}
\label{eq: small V}
    C^{2,\beta}_{\lambda_0}(\overline{\Omega})\vcentcolon =\{V\in C^{2,\beta}(\overline{\Omega})\,;\,\|V\|_{C^{2,\beta}(\overline{\Omega})}\leq \lambda_0/2\}\subset C^{2,\gamma}(\overline{\Omega}).
\end{equation}
Moreover, let us formally define the map
\begin{equation}
\label{eq: map T}
    \mathcal{T}_\eps\colon C^{2,\beta}_{\lambda_0}(\overline{\Omega})\to C^{2,\beta}_{\lambda_0}(\overline{\Omega})\text{ with }\mathcal{T}_\eps(V)\vcentcolon = W_\eps,
\end{equation}
where $W_\eps \in C^{2,\beta}(\overline{\Omega})$ denotes the solution to
\begin{equation}
\label{eq: quasilinear PDE}
    \begin{cases}
            \Div (A_\eps(V)\nabla W)=-\eps^{q-p}\Div(a|\nabla v|^{q-2}\nabla v)&\text{ in }\Omega,\\
            W=0&\text{ on }\partial\Omega.
        \end{cases}
\end{equation}
Next, we show that $\mathcal{T}_\eps$ is indeed well-defined for small $\eps>0$. To this end, we derive with the help of Lemma \ref{auxiliary lemma 2} various properties of the matrix functions $A_\eps(V)$, which are uniform in  $V\in C^{2,\beta}_{\lambda_0}(\overline{\Omega})$ and $0<\eps\leq 1$. 
\begin{claim}[Properties of $A_\eps(V)$]
\label{claim: properties of A eps V}
    The symmetric matrix functions $A_\eps(V)$ with $V\in C^{2,\beta}_{\lambda_0}(\overline{\Omega})$ and $\eps>0$ have the following properties:
    \begin{enumerate}[(a)]
        \item\label{prop 1 A eps} The matrices $A_\eps(V)$ are uniformly elliptic and satisfy
        \begin{equation}
        \label{eq: uniform ellipticity bound}
            \eta\cdot A_\eps (V)\eta\geq 
            \begin{cases}
                (\lambda_0/2)^{p-2}|\eta|^2&\text{ for }p\geq 2,\\
                (p-1)(\|\nabla v\|_{L^{\infty}}+\lambda_0/2)^{p-2}|\eta|^2&\text{ for } 1<p<2
            \end{cases}
        \end{equation}
        for all nonzero $\eta\in \R^n$, $V\in C^{2,\beta}_{\lambda_0}(\overline{\Omega})$ and $0<\eps\leq 1$.
        \item\label{prop 2 A eps} For any $V\in C^{2,\beta}_{\lambda_0}(\overline{\Omega})$ and $0<\eps\leq 1$, we have $A_\eps(V), \Div(A_\eps(V))\in C^{0,\beta}(\overline{\Omega})$ and there exists a constant
        \begin{equation}
        \label{eq: dependence of constant}
            C_0=C_0(\lambda_0,p,q,\|v\|_{C^{2,\beta}(\overline{\Omega})},(\text{diam}\, \Omega)^{\alpha-\beta},\|a\|_{C^{1,\alpha}(\Omega)})>0
        \end{equation}
        such that
        \begin{equation}
        \label{eq: uniform bound on holder norm of A eps and divergence}
            \|A_\eps(V)\|_{C^{0,\beta}}+\|\Div(A_\eps(V))\|_{C^{0,\beta}}\leq C_0.
        \end{equation}
    \end{enumerate}
\end{claim}

\begin{remark}
\label{rem: uniform bound for small C 1 beta functions}
    The estimate \eqref{eq: uniform ellipticity bound} and the uniform upper bound for $A_\eps(V)$ in \eqref{eq: uniform bound on holder norm of A eps and divergence} also hold, when $V\in C^{1,\beta}(\overline{\Omega})$ satisfies $\|V\|_{C^{1,\beta}(\overline{\Omega})}\leq \lambda_0/2$.
\end{remark}

\begin{proof} The symmetry of the matrix $A_\eps (V)$ is an immediate consequence of \eqref{eq: derivatives of Jj}. \\
    \noindent \ref{prop 1 A eps}: By formula \eqref{eq: derivatives of Jj} we have for all $1<r<\infty$ and $\xi,\eta\neq 0$:
    \begin{equation}
    \label{uniform ellipticity of J}
    \begin{split}
        \eta\cdot \nabla_{\xi} J^r(\xi)\eta&=|\xi|^{r-2}\left(|\eta|^2+(r-2)\frac{|\xi\cdot\eta|^2}{|\xi|^2}\right)\\
        &=|\xi|^{r-2}(1+(r-2)|\hat{\xi}\cdot\hat{\eta}|^2)|\eta|^2\\
        &\geq \min(1,r-1)|\xi|^{r-2}|\eta|^2.
    \end{split}
    \end{equation}
    In the above calculation, we used the notation $\hat{\zeta}=\zeta/|\zeta|$.
    Next, we observe that for all $(x,t)\in\overline{\Omega}\times [0,1]$ and $V\in C^{2,\beta}_{\lambda_0}(\overline{\Omega})$, we have the uniform bounds
    \begin{equation}
    \label{eq: uniform bounds for v0 plus t v}
        \begin{split}
            |\nabla v(x)+t\nabla V(x)|&\geq \lambda_0-t|\nabla V(x)|\geq \lambda_0/2,\\
            |\nabla v(x)+t\nabla V(x)|&\leq \|v\|_{L^{\infty}(\Omega)}+\lambda_0/2.
        \end{split}
    \end{equation}
    Hence, by combining \eqref{uniform ellipticity of J} and \eqref{eq: uniform bounds for v0 plus t v}, we deduce that for all $V\in C^{2,\beta}_{\lambda_0}(\overline{\Omega})$, $(x,t)\in \overline{\Omega}\times [0,1]$ and $\eta\neq 0$ there holds
    \begin{equation}
    \label{eq: uniform ellipticity of J with v0 and v}
        \eta\cdot \nabla_{\xi} J^r(\nabla v(x)+t\nabla V(x))\eta\geq  \begin{cases}
                (\lambda_0/2)^{r-2}|\eta|^2&\text{ for }r\geq 2,\\
                (r-1)(\|\nabla v\|_{L^{\infty}}+\lambda_0/2)^{r-2}|\eta|^2&\text{ for } 1<r<2.
            \end{cases}
    \end{equation}
    Inserting this into \eqref{eq: coeff p and p,q phase}--\eqref{eq: def A eps} and using $a\geq 0$, we obtain the uniform ellipticity bound \eqref{eq: uniform ellipticity bound}.\\

    \noindent\ref{prop 2 A eps}: We first prove $A_\eps (V)\in C^{0,\beta}(\overline{\Omega})$ for all $V\in C^{2,\beta}_{\lambda_0}(\overline{\Omega})$. Let us start by recalling that for all $0\leq \alpha_1\leq \alpha_2\leq 1$ and $u_j\in C^{0,\alpha_j}(\overline{\Omega})$, $j=1,2$, there holds
    \begin{equation}
    \label{eq: product rule Holder}
        \|u_1u_2\|_{C^{0,\alpha_1}}\leq \max(1,(\text{diam }\Omega)^{\alpha_2-\alpha_1})\|u_1\|_{C^{0,\alpha_1}}\|u_2\|_{C^{0,\alpha_2}}.
    \end{equation}
    Now, using \eqref{eq: product rule Holder} and \eqref{eq: coeff p and p,q phase}--\eqref{eq: def A eps}, a direct calculation shows that
    \begin{equation}
    \label{eq: C beta norm of A eps}
        \begin{split}
            &\|A_\eps(V)\|_{C^{0,\beta}}\leq \|A^p(V)\|_{C^{0,\beta}}+\|a\|_{C^{0,\alpha}}\|A^q(V)\|_{C^{0,\beta}}\\
            &\leq \sup_{0\leq t\leq 1}\|\nabla_{\xi} J^p(\nabla v+t\nabla V)\|_{C^{0,\beta}}+\|a\|_{C^{0,\beta}}\sup_{0\leq t\leq 1}\|\nabla_{\xi} J^q(\nabla v+t\nabla V)\|_{C^{0,\beta}}\\
            &\leq \sup_{0\leq t\leq 1}\|\nabla_{\xi} J^p(\nabla v+t\nabla V)\|_{C^{0,\beta}}+(\text{diam }\Omega)^{\alpha-\beta}\|a\|_{C^{0,\alpha}}\sup_{0\leq t\leq 1}\|\nabla_{\xi} J^q(\nabla v+t\nabla V)\|_{C^{0,\beta}}
        \end{split}
    \end{equation}
    for all $0<\eps\leq 1$ and $V\in C^{2,\beta}_{\lambda_0}(\overline{\Omega})$. Hence, it remains to bound $\sup_{0\leq t\leq 1}\|\nabla_{\xi} J^r(\nabla v+t\nabla V)\|_{C^{0,\beta}}$ for $r=p,q$ uniformly to obtain the desired conclusion. By \eqref{eq: product rule Holder}, Lemma \ref{auxiliary lemma 2} with $u\vcentcolon =v_0+tV$ and $\lambda_1=\lambda_0/2$ and \eqref{eq: uniform bounds for v0 plus t v}, we have
    \begin{equation}
    \label{eq: estimate for nabla Jr in Holder}
    \begin{split}
        &\|\nabla_{\xi} J^r(\nabla u)\|_{C^{0,\beta}}\leq \||\nabla u|^{r-2}\|_{C^{0,\beta}}\left\|\mathbf{1}+\frac{\nabla u\otimes \nabla u}{|\nabla u|^2}\right\|_{C^{0,\beta}}\\
        &\leq \||\nabla u|^{r-2}\|_{C^{0,\beta}}\left(1+\|\nabla u\otimes \nabla u\|_{C^{0,\beta}}\||\nabla u|^{-2}\|_{C^{0,\beta}}\right)\\
        &\leq C\||\nabla u|^{r-2}\|_{C^{0,\beta}}\left(1+\|\nabla u\|^2_{C^{0,\beta}}(\lambda_1^{-2}+4\lambda_1^{-4}\|\nabla u\|^2_{C^{0,\beta}})\right)\\
        &\leq \begin{cases}
            C_1(\|\nabla u\|_{C^{0,\beta}}^{r-2}+\|\nabla u\|_{C^{0,\beta}}^r+\|\nabla u\|_{C^{0,\beta}}^{r+2})&\text{ for }r\geq 3,\\
            C_2(\|\nabla u\|_{C^{0,\beta}}^{r-2}+\|\nabla u\|_{C^{0,\beta}}^{2^{\kappa_{r}}(r-2)})(1+\|\nabla u\|_{C^{0,\beta}}^2+\|\nabla u\|_{C^{0,\beta}}^4)&\text{ for }2\leq r< 3,\\
            C_3(1+\|\nabla u\|_{C^{0,\beta}}^{2^{\kappa_{r}}(2-r)})(1+\|\nabla u\|_{C^{0,\beta}}^2+\|\nabla u\|_{C^{0,\beta}}^4)&\text{ for }1< r< 2,
        \end{cases}
    \end{split}
    \end{equation}
    where the constants $C_1,C_2,C_3$ only depend on $\lambda_1$, $r$ and $\kappa_{r}=k_{|r-2|}$, where $k_{|2-r|}$ is the constant from Lemma \ref{auxiliary lemma 2}. 

    Finally, taking into account the uniform bound
    \begin{equation}
        \label{eq: uniform Holder bounds for v0 plus t v}
            \lambda_0/2\leq\|\nabla v+t\nabla V\|_{C^{0,\beta}}\leq \|v\|_{C^{1,\beta}}+\lambda_0/2
    \end{equation}
    for all $V\in C^{2,\beta}_{\lambda_0}(\overline{\Omega})$ and $0\leq t\leq 1$, we establish from the estimates \eqref{eq: C beta norm of A eps} and \eqref{eq: estimate for nabla Jr in Holder} the existence of a constant 
    \[
        C^{(1)}_0=C^{(1)}_0(\lambda_0,p,q,\|\nabla v\|_{C^{0,\beta}},\|a\|_{C^{0,\alpha}},(\text{diam }\Omega)^{\alpha-\beta})>0
    \]
    such that
    \begin{equation}
    \label{eq: uniform bound of A eps V}
        \|A_\eps(V)\|_{C^{0,\beta}}\leq C^{(1)}_0
    \end{equation}
    for all $0<\eps\leq 1$ and $V\in C^{2,\beta}_{\lambda_0}(\overline{\Omega})$.

    Next, we prove the $\beta$ H\"older continuity of $\Div(A_\eps(V))$. First, note that
    \begin{equation}
    \label{eq: decomposition of divergence}
        \Div(A_\eps(V))=\Div(A^p(V))+\eps^{q-p}a\Div(A^q(V))+\eps^{q-p}A^q(V)\nabla a.
    \end{equation}
    From \eqref{eq: uniform bound of A eps V}, \eqref{eq: product rule Holder} and $a\in C^{1,\alpha}(\overline{\Omega})$, we already know that there exists a constant
    \begin{equation}     
    \label{eq: first constant for divergence}
    C^{(2)}_0=C^{(2)}_0(\lambda_0,p,q,\|\nabla v\|_{C^{0,\beta}},\|a\|_{C^{1,\alpha}},(\text{diam }\Omega)^{\alpha-\beta})>0
    \end{equation}
    such that
    \begin{equation}
    \label{eq: uniform bound of last term of divergence}
        \eps^{q-p}\|A^q(V)\nabla a\|_{C^{0,\beta}}\leq (\text{diam }\Omega)^{\alpha-\beta}\|A^q(V)\|_{C^{0,\beta}}\|a\|_{C^{1,\alpha}}\leq C_0^{(2)}
    \end{equation}
    for all $0<\eps\leq 1$ and $V\in C^{2,\beta}_{\lambda_0}(\overline{\Omega})$. Hence, it remains to uniformly bound $\|\Div(A^r(V))\|_{C^{0,\beta}}$, $1<r<\infty$, by some constant $C_0^{(3)}$, having the same dependence structure as $C_0$ in \eqref{eq: dependence of constant},
    to deduce the existence of a constant $C_2>0$ with \eqref{eq: dependence of constant} such that the estimate \eqref{eq: uniform bound on holder norm of A eps and divergence} holds.

    To achieve this, for a given function $v\in C^{2,\beta}(\overline{\Omega})$ without critical points and $1\leq i,j,k\leq n$, we calculate
    \begin{equation}
    \label{eq: second derivative of J}
    \begin{split}
        \partial_{i}\partial_{\xi_j}J^r_k(\nabla v)&=\partial_i\left(|\nabla v|^{r-2}\delta_{jk}+(r-2)|\nabla v|^{r-4}\partial_j v\partial_k v\right)\\
        &=(r-2)|\nabla v|^{r-4}( \delta_{jk}\partial_{\ell} v \partial_{i\ell}v+\partial_{ij} v\partial_k v+\partial_j v\partial_{ik} v)\\
        &\quad +(r-4)(r-2)|\nabla v|^{r-6}\partial_j v\partial_k v\partial_{\ell} v \partial_{i\ell}v.
    \end{split}
    \end{equation}
    Here and in the rest of this article, we are adopting the Einstein summation convention. In the first equality we used \eqref{eq: derivatives of Jj} and in the second equality the identity 
    \begin{equation}
    \label{eq: derivative of mod power s}
        \partial_i |\nabla v|^s=s |\nabla v|^{s-2}\partial_{\ell} v \partial_{i\ell}v
    \end{equation}
    for all $s\in\R$. Now, suppose again that
    \[
        \inf_{x\in \overline{\Omega}}|\nabla v(x)|\geq \lambda_1
    \]
    for some $\lambda_1>0$. Using \eqref{eq: second derivative of J}, we deduce
    \begin{equation}
    \label{eq: divergence of nabla J}
    \begin{split}
        (\Div \nabla_\xi J^r(\nabla v))_i&=\partial_j(\nabla_\xi J(\nabla v))_{ij}=\partial_j \partial_{\xi_j}J^r_i(\nabla v)\\
        &=\partial_j\left(|\nabla v|^{r-2}\delta_{ij}+(r-2)|\nabla v|^{r-4}\partial_i v\partial_j v\right)\\
        &=(r-2)|\nabla v|^{r-4}( \delta_{ij}\partial_{\ell} v \partial_{j\ell}v+\partial_{jj} v\partial_i v+\partial_j v\partial_{ji} v)\\
        &\quad +(r-2)(r-4)|\nabla v|^{r-6}\partial_j v\partial_i v\partial_{\ell} v \partial_{j\ell}v
    \end{split}
    \end{equation}
    for every $1\leq i\leq n$. Now, applying the estimate \eqref{eq: product rule Holder} and Lemma \ref{auxiliary lemma 2}, we obtain
    \begin{equation}
        \begin{split}
            &\|\Div \nabla_\xi J^r(\nabla v)\|_{C^{0,\beta}}\\
            &\leq C(\||\nabla v|^{r-4}\|_{C^{0,\beta}}+\||\nabla v|^{r-6}\|_{C^{0,\beta}}\|\nabla v\|_{C^{0,\beta}}^2)\|\nabla v\|_{C^{0,\beta}}\|\nabla^2 v\|_{C^{0,\beta}}\\
            &\leq C\||\nabla v|^{r-4}\|_{C^{0,\beta}}(1+\||\nabla v|^{-2}\|_{C^{0,\beta}}\|\nabla v\|_{C^{0,\beta}}^2)\|\nabla v\|_{C^{0,\beta}}\|\nabla^2 v\|_{C^{0,\beta}}\\
            &\leq C\||\nabla v|^{r-4}\|_{C^{0,\beta}}(1+\|\nabla v\|_{C^{0,\beta}}^4)\|\nabla v\|_{C^{0,\beta}}\|\nabla^2 v\|_{C^{0,\beta}}\\
            &\leq C\||\nabla v|^{r-4}\|_{C^{0,\beta}}(1+\|\nabla v\|_{C^{0,\beta}}^4)\|\nabla^2 v\|_{C^{0,\beta}}\\
            &\leq \begin{cases}
            c_1\|\nabla v\|_{C^{0,\beta}}^{r-4}(1+\|\nabla v\|_{C^{0,\beta}}^4)\|\nabla^2 v\|_{C^{0,\beta}}&\text{ for }r\geq 5,\\
            c_2(\|\nabla v\|_{C^{0,\beta}}^{r-2}+\|\nabla v\|_{C^{0,\beta}}^{2^{\kappa_{r}}(r-2)})(1+\|\nabla v\|_{C^{0,\beta}}^4)\|\nabla^2 v\|_{C^{0,\beta}}&\text{ for }4\leq r< 5,\\
            c_3(1+\|\nabla v\|_{C^{0,\beta}}^{2^{\kappa_{r}}(2-r)})(1+\|\nabla v\|_{C^{0,\beta}}^4)\|\nabla^2 v\|_{C^{0,\beta}}&\text{ for }1< r< 4,
        \end{cases}
        \end{split}
    \end{equation}
    where  $c_1,c_2,c_3>0$ only depend on $\lambda_1$, $r$ and $\kappa_r$ is the same as in \eqref{eq: estimate for nabla Jr in Holder}. Now, we can argue as in the case of $A_\eps (V)$ (see~\eqref{eq: C beta norm of A eps}), while using 
    \[
        \lambda_0/2\leq\|\nabla v+t\nabla V\|_{C^{1,\beta}}\leq \|v\|_{C^{2,\beta}}+\lambda_0/2,
    \]
    to deduce the desired uniform bound
    \begin{equation}
    \label{eq: holder estimate of remaining divergence terms}
    \begin{split}
        &\|\Div(A^p(V))\|_{C^{0,\beta}}+\eps^{q-p}\|a\Div(A^p(V))\|_{C^{0,\beta}}\\
        &\leq \|\Div(A^p(V))\|_{C^{0,\beta}}+\|a\|_{C^{0,\beta}}\|\Div(A^p(V))\|_{C^{0,\beta}}\\
        &\leq \|\Div(A^p(V))\|_{C^{0,\beta}}+(\text{diam }\Omega)^{\alpha-\beta}\|a\|_{C^{0,\alpha}}\|\Div(A^p(V))\|_{C^{0,\beta}}\\
        &\leq C_0^{(3)}
    \end{split}
    \end{equation}
    for all $V\in C^{2,\beta}_{\lambda_0}(\overline{\Omega})$, $0<\eps\leq 1$ and some constant
    \[
        C_0^{(3)}=C_0^{(3)}(\lambda_0,p,q,\|v\|_{C^{2,\beta}},(\text{diam}\, \Omega)^{\alpha-\beta},\|a\|_{C^{1,\alpha}(\Omega)})>0.
    \]
    Hence, by the estimates \eqref{eq: uniform bound of A eps V}, \eqref{eq: uniform bound of last term of divergence} and \eqref{eq: holder estimate of remaining divergence terms}, we see that we have proved \eqref{eq: uniform bound on holder norm of A eps and divergence} with $C_0=\sum_{j=1}^{3}C_0^{(j)}$. This finishes the proof of Claim \ref{claim: properties of A eps V}.
\end{proof}
Now, by expanding the partial differential operator $\Div (A_\eps(V)\nabla W)$ as 
\begin{equation}
\label{eq: expanded PDO}
    \Div (A_\eps(V)\nabla W)=A_\eps(V):\nabla^2 W+\Div(A_\eps(V))\cdot \nabla W,
\end{equation}
where we set $A:B=A_{ij}B_{ij}$ for two matrices $A,B\in\R^{n\times n}$, and using Claim \ref{claim: properties of A eps V}, we can apply \cite[Theorem 6.14]{GiTru} for any $V\in C^{2,\beta}_{\lambda_0}(\overline{\Omega})$ and $0<\eps\leq 1$ to deduce the existence of a unique solution $W_\eps\in C^{2,\beta}(\overline{\Omega})$ to \eqref{eq: quasilinear PDE}. By \cite[Theorem 6.6]{GiTru} this unique solution $W_\eps$ satisfies
\begin{equation}
    \|W_\eps\|_{C^{2,\beta}(\overline{\Omega})}\leq C(\|W_\eps\|_{L^{\infty}(\Omega)}+\eps^{q-p}\|f_0\|_{C^{0,\beta}(\overline{\Omega})})
\end{equation}
for some $C>0$ only depending on $n,\Omega,\alpha,\beta,p,q,\lambda_0,\|v\|_{C^{2,\beta}(\overline{\Omega})}$ and $\|a\|_{C^{1,\alpha}(\overline{\Omega})}$, where we set
\begin{equation}
    f_0\vcentcolon = -\Div(a|\nabla v|^{q-2}\nabla v).
\end{equation}
Furthermore, from \cite[Theorem 3.7]{GiTru} and Claim \ref{claim: properties of A eps V} we get the estimate
\begin{equation}
    \|W_\eps\|_{L^{\infty}(\Omega)}\leq c\eps^{q-p}\|f_0\|_{L^{\infty}(\Omega)}
\end{equation}
for some constant $c>0$ only depending on $\lambda_0,p,q,\alpha,\beta,\|v\|_{C^{2,\beta}(\overline{\Omega})},\text{diam}\, \Omega$ and $\|a\|_{C^{1,\alpha}(\Omega)}$. Therefore, the solution $W_\eps$ satisfies 
\begin{equation}
\label{eq: uniform bound in eps}
    \|W_\eps\|_{C^{2,\beta}(\overline{\Omega})}\leq C_1\eps^{q-p}\|f_0\|_{C^{0,\beta}(\overline{\Omega})},
\end{equation}
where $C_1>0$ only depends on $\lambda_0,p,q,\alpha,\beta,\|v\|_{C^{2,\beta}(\overline{\Omega})},\text{diam}\, \Omega$ and $\|a\|_{C^{1,\alpha}(\Omega)}$.

Thus, if we choose $\eps_0>0$ such that
\begin{equation}
\label{eq: choice of small eps}
    C_1\eps_0^{q-p}\|f_0\|_{C^{0,\beta}(\overline{\Omega})}\leq \lambda_0/2,
\end{equation}
then the map $\mathcal{T}_\eps$, with $0<\eps\leq \eps_0$ and defined by formula \eqref{eq: map T}, maps $C^{2,\beta}_{\lambda_0}(\overline{\Omega})$ to itself and so $\mathcal{T}_\eps$ is well-defined for all $0<\eps\leq \eps_0$. 

Next, we show that $C^{2,\beta}_{\lambda_0}(\overline{\Omega})$ is compact in the topology inherited from $C^{2,\gamma}(\overline{\Omega})$. To this end, let us recall that the embedding
\begin{equation}
\label{eq: compact embedding}
    C^{2,\beta}(\overline{\Omega})\hookrightarrow C^{2,\gamma}(\overline{\Omega})
\end{equation}
is compact and so $C^{2,\beta}_{\lambda_0}(\overline{\Omega})$ is precompact in $C^{2,\gamma}(\overline{\Omega})$. By using the Arzel\'a--Ascoli theorem, we can conclude that $C^{2,\beta}_{\lambda_0}(\overline{\Omega})$ is closed in $C^{2,\gamma}(\overline{\Omega})$ and so compact in $C^{2,\gamma}(\overline{\Omega})$. To see that it is closed, let $(V_k)_{k\in\N}\subset C^{2,\beta}_{\lambda_0}(\overline{\Omega})$ and assume that $V_k\to V$ in $C^{2,\gamma}(\overline{\Omega})$ as $k\to\infty$. Then \cite[Section 1.1, (H8)]{Elliptic-PDEs-Xavi}, which is a consequence of the Arzel\'a--Ascoli theorem, implies that $V\in C^{2,\beta}(\overline{\Omega})$ and $\|V\|_{C^{2,\beta}(\overline{\Omega})}\leq \lambda_0/2$. Therefore, we have $V\in C^{2,\beta}_{\lambda_0}(\overline{\Omega})$ and so the assertion follows.

Finally, we show that the map $\mathcal{T}_\eps$ is continuous for all $0<\eps\leq \eps_0$, where $C^{2,\beta}_{\lambda_0}(\overline{\Omega})$ is considered again as a subset of $C^{2,\gamma}(\overline{\Omega})$. Hence, let $(V_k)_{k\in\N}\subset C^{2,\beta}_{\lambda_0}(\overline{\Omega})$ and assume that $V_k\to V$ in $C^{2,\gamma}(\overline{\Omega})$ as $k\to\infty$ for some $V\in C^{2,\gamma}(\overline{\Omega})$. As $C^{2,\beta}_{\lambda_0}(\overline{\Omega})$ is closed, we have $V\in C^{2,\beta}_{\lambda_0}(\overline{\Omega})$. Moreover, let us denote the corresponding solutions to \eqref{eq: quasilinear PDE} by $W_k$ and $W$, respectively. Recall that by the arguments above these solutions belong to $C^{2,\beta}_{\lambda_0}(\overline{\Omega})$, when $0<\eps\leq \eps_0$ (see~Claim \ref{claim: properties of A eps V} and \cite[Theorem 6.14]{GiTru}). Arguing as for \eqref{eq: uniform bound in eps}, we deduce that $\|W_k\|_{C^{2,\beta}(\overline{\Omega})}$ is uniformly bounded in $k$ and hence compactness of the embedding \eqref{eq: compact embedding} implies that $W_k\to \widetilde{W}$ in $C^{2,\gamma}(\overline{\Omega})$ as $k\to\infty$ (up to subsequences) for some $\widetilde{W}\in C^{2,\beta}(\overline{\Omega})$ satisfying the same bounds as $W_k$ (see~\cite[Section 1.1, (H8)]{Elliptic-PDEs-Xavi}). Then, we may deduce that
\begin{equation}
\label{eq: convergece for continuity of T eps}
\begin{split}
   \Div(A_\eps(V_k)\nabla W_k)\to \Div(A_\eps (V)\nabla \widetilde{W})
\end{split}
\end{equation}
and hence by uniqueness of solutions to \eqref{eq: quasilinear PDE} there holds $\widetilde{W}=W$. In \eqref{eq: convergece for continuity of T eps}, we are using that the divergence operator can be written in the form \eqref{eq: expanded PDO} and that $V\mapsto A_\eps (V),\Div(A_\eps(V))$ is continuous from $C^2(\overline{\Omega})$ to $C(\overline{\Omega})$ as long as $\|V\|_{C^{2,\beta}(\overline{\Omega})}\leq \lambda_0/2$, which follows from \eqref{eq: coeff p and p,q phase}, \eqref{eq: divergence of nabla J}, Claim \ref{claim: properties of A eps V} and the dominated convergence theorem. As the limit does not depend on the extracted subsequence, we can conclude that the whole sequence $W_k$ converges to $W$ in $C^{2,\gamma}(\overline{\Omega})$. Thus, the map $\mathcal{T}_\eps$ is continuous for all $0<\eps\leq \eps_0$.

Therefore, by the above observations on the maps $\mathcal{T}_\eps$, $0<\eps\leq \eps_0$, we can apply Schauder's fixed point theorem \cite[Theorem 11.1]{GiTru} to deduce that for any $0<\eps\leq \eps_0$ the map $\mathcal{T}_\eps$ has a fixed point $W_\eps$ in $C^{2,\beta}_{\lambda_0}(\overline{\Omega})$. Furthermore, by uniqueness of solutions to \eqref{eq: PDE for w eps} and $C^{2,\beta}(\overline{\Omega})\subset W^{1,p,q}_{a}(\Omega)$ we must have 
\begin{equation}
\label{eq: increased regularity of w eps}
    w_\eps=W_\eps\in C^{2,\beta}_{\lambda_0}(\overline{\Omega}),
\end{equation}
where $w_\eps$ is defined by \eqref{eq: expansion}. Hence, \eqref{eq: uniform bound in eps} and \eqref{eq: increased regularity of w eps} ensure that there holds
\begin{equation}
\label{eq: uniform bound of w eps}
    \| w_\eps\|_{C^{2,\beta}(\overline{\Omega})}\leq C_0\eps^{q-p}
\end{equation}
for all $0<\eps\leq \eps_0$.

Because of the asymptotic behaviour \eqref{eq: uniform bound of w eps}, we now introduce the functions
\begin{equation}
\label{eq: def of Reps}
    R_\eps=\eps^{p-q}w_\eps    
\end{equation}
for all $0<\eps\leq \eps_0$. Moreover, the estimate \eqref{eq: uniform bound of w eps} demonstrates that $R_\eps$, $0<\eps\leq \eps_0$, is uniformly bounded in $C^{2,\beta}(\overline{\Omega})$ and hence, as above, we can conclude that there exists $R_v\in  C^{2,\beta}(\overline{\Omega})$ such that
\begin{equation}
\label{eq: limit R eps}
    R_{\eps_k}\to R_v\text{ in }C^{2,\gamma}(\overline{\Omega})
\end{equation}
as $k\to\infty$ for a suitable subsequence $(\eps_k)_{k\in\N}\subset (0,\eps_0]$. Therefore, using \eqref{eq: uniform bound of w eps}, $q>p$ and \eqref{eq: limit R eps}, we see that passing to the limit in \eqref{eq: PDE for w eps}, and noting that $A_v^p=A^p(0)$,  yields
\begin{equation}
\label{eq: PDE for R0 proof}
    \begin{cases}
            \Div (A_v^p\nabla R_v)=-\Div(a|\nabla v|^{q-2}\nabla v)&\text{ in }\Omega,\\
            R_v=0&\text{ on }\partial\Omega.
        \end{cases}
\end{equation}
To see that we obtain in the limit the PDE \eqref{eq: PDE for R0 proof}, one can argue similarly as for the continuity of the operators $\mathcal{T}_\eps$, $0<\eps\leq \eps_0$.

Since $R_v\in C^{2,\beta}(\overline{\Omega})$ is the unique solution of \eqref{eq: PDE for R0 proof}, it does not depend on the subsequence taken and hence the whole sequence $(R_\eps)_{0<\eps\leq \eps_0}$ needs to converge to $R_v$. Furthermore, for example by \cite[Theorem 6.19]{GiTru}, it follows that $R_v\in C^{2,\alpha}(\overline{\Omega})$. Therefore, we have shown the desired asymptotic expansion \eqref{eq: asymptotic expansion}.
\end{proof}

\subsection{Asymptotic expansion of solutions for \texorpdfstring{$p>q$}{p greater q}}
\label{subsec: Asymptotic expansion of solutions to the double phase equation with large Dirichlet datum}

The goal of this section is to prove that for any smooth $p$-harmonic function $v$ without critical points and $1<q<p<\infty$, the solution $u_\mu$ to 
    \begin{equation}
    \label{eq: mu PDE}
    \begin{cases}
            \Div(|\nabla u|^{p-2}\nabla u+a|\nabla u|^{q-2}\nabla u)=0&\text{ in }\Omega,\\
            u=\mu v&\text{ on }\partial\Omega,
        \end{cases}
    \end{equation}
    can be expanded as
    \begin{equation}
    \label{eq: asymptotics p>q}
        u_\mu=\mu v+\mu^{1+q-p}R_v+o(\mu^{1+q-p})
    \end{equation}
    for a suitable function $R_v$ as $\mu\to\infty$. Thus, if the $p$-Laplacian is the dominant term, then we consider large boundary values instead of small ones as used in the range $p<q$. Note that in the latter case the weighted $q-$Laplacian yields the major contribution.

    Let us start by observing several facts for the double phase problem \eqref{eq: mu PDE}, when $q<p$ and the boundary condition $\mu v$ is replaced by an arbitrary function $f\colon\partial\Omega\to \R$: 
    \begin{enumerate}[({F}1)]
        \item\label{solution space} \textit{Function space:} If $1<q<p<\infty$, then $W^{1,p,q}_{a}(\Omega)=W^{1,p}(\Omega)$.
        \item\label{well-posedness} \textit{Well-posedness:} Theorem \ref{thm: well-posedness double phase} remains valid for $1<q<p<\infty$, when $f\in W^{1,p}(\Omega)\hookrightarrow W^{1,q}(\Omega)$. More concretely, in this case \cite[Theorem 3.30]{dacorogna2007direct} and the arguments in the proof of Theorem \ref{thm: well-posedness double phase} ensure the existence of a unique solution $u\in W^{1,p}(\Omega)$ of the double phase problem with boundary value $f$, which coincides with the unique minimizer of $\mathcal{F}$ over the set $M_f=f+W^{1,p}_0(\Omega)$.
        \item\label{maximum principle} \textit{Comparison and maximum principle:} The comparison principle (Proposition \ref{prop: comparison}) still holds in the case $1<q<p<\infty$ and as explained in Section \ref{sec: maximum principle} this in turn guarantees the validity of the maximum principle. Thus, if $u\in W^{1,p}(\Omega)$ solves the double phase problem with boundary condition $f\in L^{\infty}(\Omega)\cap W^{1,p}(\Omega)$, then one has
        \begin{equation}
        \label{eq: maximum principle q<p}
            \|u\|_{L^{\infty}(\Omega)}\leq \|f\|_{L^{\infty}(\Omega)}.
        \end{equation}
    \end{enumerate}

Next, we show the asymptotic expansion \eqref{eq: asymptotics p>q}. More precisely, we have the following result:

    \begin{proposition}
\label{prop: asymptotic expansion p>q}
    Let $\Omega\subset\R^n$ be a smoothly bounded domain, $1<q<p<\infty$, $0<\alpha\leq 1$, $0<\gamma<\beta<1$ with $\beta \leq \alpha$ and assume that $a\in C^{1,\alpha}(\overline{\Omega})$ is nonnegative. Moreover, suppose that $v\in C^{\infty}(\overline{\Omega})$ is a $p$-harmonic function without critical points. Then the unique solution $u_\mu$, $\mu>0$, to \eqref{eq: mu PDE} has the asymptotic expansion \eqref{eq: asymptotics p>q} in the sense of $C^{2,\gamma}(\overline{\Omega})$ as $\mu\to \infty$, in which $R_v\in C^{2,\alpha}(\overline{\Omega})$ again denotes the unique solution to 
    \begin{equation}
\label{eq: PDE for R0 p>q}
    \begin{cases}
            \Div (A_v^p\nabla R)=-\Div(a|\nabla v|^{q-2}\nabla v)&\text{ in }\Omega,\\
            R=0&\text{ on }\partial\Omega,
        \end{cases}
    \end{equation}
    where $A_v^p$ is the uniformly elliptic matrix $\nabla_{\xi} J^p(\nabla v)$
    (see~\eqref{eq: derivatives of Jj} and \eqref{eq: coeff p and p,q phase}).
\end{proposition}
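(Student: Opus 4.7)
The plan is to transplant the proof of Proposition~\ref{prop: asymptotic expansion} essentially verbatim, with $\eps\to 0^+$ replaced by $\mu\to \infty$. The crucial algebraic observation is that the small factor $\eps^{q-p}\to 0$ that drove the previous argument is now replaced by $\mu^{q-p}\to 0$, which is still small because here $q-p<0$.

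First I would write $u_\mu=\mu(v+w_\mu)$ with $w_\mu\in W^{1,p}_0(\Omega)=W^{1,p,q}_{a,0}(\Omega)$ (by (F\ref{solution space})). Repeating the Taylor expansion \eqref{Taylor}, using $\Div(|\nabla v|^{p-2}\nabla v)=0$, and dividing by the dominant factor $\mu^{p-1}$, one sees that $w_\mu$ solves
\begin{equation*}
    \begin{cases}
            \Div (A_\mu(w_\mu)\nabla w_\mu)=-\mu^{q-p}\Div(a|\nabla v|^{q-2}\nabla v)&\text{ in }\Omega,\\
            w_\mu=0&\text{ on }\partial\Omega,
        \end{cases}
\end{equation*}
with $A_\mu(w_\mu)\vcentcolon= A^p(w_\mu)+\mu^{q-p}\,a\,A^q(w_\mu)$, in perfect analogy with \eqref{eq: PDE for w eps}--\eqref{eq: def A eps}. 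Uniqueness of $w_\mu$ in $W^{1,p}_0(\Omega)$ follows from (F\ref{well-posedness}).

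Next, since $\mu^{q-p}\leq 1$ for every $\mu\geq 1$, Claim~\ref{claim: properties of A eps V} and Remark~\ref{rem: uniform bound for small C 1 beta functions} apply verbatim with $A_\eps(V)$ replaced by $A_\mu(V)$ (the uniform ellipticity constants and $C^{0,\beta}$-bounds depend only on the size of the coefficient of $aA^q(V)$, which remains $\leq 1$). Consequently the Schauder fixed-point scheme of Proposition~\ref{prop: asymptotic expansion}, run on the closed convex set $C^{2,\beta}_{\lambda_0}(\overline{\Omega})$, carries over unchanged and produces some threshold $\mu_0\geq 1$ such that
\begin{equation*}
    \|w_\mu\|_{C^{2,\beta}(\overline{\Omega})}\leq C_1\, \mu^{q-p}\,\|\Div(a|\nabla v|^{q-2}\nabla v)\|_{C^{0,\beta}(\overline{\Omega})}\qquad\text{for all }\mu\geq\mu_0.
\end{equation*}

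Finally I would set $R_\mu\vcentcolon=\mu^{p-q}w_\mu$, which by the preceding estimate is uniformly bounded in $C^{2,\beta}(\overline{\Omega})$. Using the compact embedding $C^{2,\beta}(\overline{\Omega})\hookrightarrow C^{2,\gamma}(\overline{\Omega})$ and passing to the limit $\mu\to\infty$ in the rescaled equation for $R_\mu$ (the correction term $\mu^{q-p}\Div(aA^q(\mu^{q-p}R_\mu)\nabla R_\mu)$ vanishes thanks to the surviving prefactor $\mu^{q-p}$), one obtains a subsequential limit $R_v$ solving \eqref{eq: PDE for R0 p>q}. Unique solvability of that linear elliptic problem and \cite[Theorem 6.19]{GiTru} then pin down the whole family $R_\mu\to R_v$ in $C^{2,\gamma}(\overline{\Omega})$ and give $R_v\in C^{2,\alpha}(\overline{\Omega})$. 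The only step that requires real care—and thus the main (mild) obstacle—is checking that the facts (F\ref{solution space})--(F\ref{maximum principle}) indeed provide the well-posedness and $L^\infty$ bound needed to set up the fixed-point scheme in the $q<p$ regime; once this is in hand, the rest is a direct translation of the $p<q$ proof.
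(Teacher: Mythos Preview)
Your proposal is correct and follows essentially the same route as the paper's proof: both write $u_\mu=\mu(v+w_\mu)$, observe that $w_\mu$ solves the same auxiliary problem as $w_\eps$ with $\eps^{q-p}$ replaced by the equally small factor $\mu^{q-p}$, invoke Claim~\ref{claim: properties of A eps V} (which is insensitive to the ordering of $p$ and $q$ once $\mu\geq 1$), run the Schauder fixed-point scheme to get $\|w_\mu\|_{C^{2,\beta}}\lesssim\mu^{q-p}$, and then pass to the limit in $R_\mu=\mu^{p-q}w_\mu$ via compactness and uniqueness of \eqref{eq: PDE for R0 p>q}. The only minor remark is that the $L^\infty$ bound from (F\ref{maximum principle}) is used in the paper merely to motivate the ansatz; the fixed-point argument itself relies on the linear estimates \cite[Theorems 3.7, 6.6, 6.14]{GiTru} exactly as in the $p<q$ case.
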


\begin{proof}
    Since the proof of Proposition \ref{prop: asymptotic expansion p>q} is very similar to the one of Proposition \ref{prop: asymptotic expansion} we omit some of the details.

    Assume $v\in C^\infty(\overline{\Omega})$ is any $p$-harmonic function without any critical points and let us denote by $u_\mu\in W^{1,p}(\Omega)$, $\mu>0$, the unique solution to \eqref{eq: mu PDE} (see~\ref{well-posedness}). By the maximum principle we know that there holds
    \begin{equation}
        \|u_\mu\|_{L^{\infty}(\Omega)}\lesssim \mu \|v\|_{L^{\infty}(\Omega)}
    \end{equation}
    (see~\ref{maximum principle}) and thus we define $w_\mu\in W^{1,p}_{0}(\Omega)$ by the Ansatz
    \begin{equation}
    \label{eq: expansion large dirichlet data}
        u_\mu=\mu (v+w_\mu),
    \end{equation}
    which solves 
    \begin{equation}
    \label{eq: PDE for w mu}
    \begin{cases}
            \Div (A_\mu(w_\mu)\nabla w_\mu)=-\mu^{q-p}\Div(a|\nabla v|^{q-2}\nabla v)&\text{ in }\Omega,\\
            w_\mu=0&\text{ on }\partial\Omega,
        \end{cases}
\end{equation}
    where 
    \[
        A_\mu(w_\mu)=A^p(w_\mu)+\mu^{q-p}a\,A^q(w_\mu)
    \]
    and $A^r(w)$, $1<r<\infty$, is defined as in \eqref{eq: coeff p and p,q phase} (see~\eqref{eq: p phase expansion}, \eqref{eq: p,q phase expansion} and \eqref{eq: PDE for w eps}). So, we end up with precisely the same problem as for $w_\eps$, but with the replacement $\eps\to\mu$ and $w_\eps\to w_\mu$. Furthermore, note that in Claim \ref{claim: properties of A eps V} the ordering of the exponents $p,q$ has no influence and thus the assertions remain valid as long as $\mu\geq 1$. Hence, by following the arguments in the proof of Proposition \ref{prop: asymptotic expansion} (essentially Schauder's fixed point theorem) we may deduce that for $\mu\geq \mu_0$, with $\mu_0\geq 1$ large, the solution $w_\mu$ of \eqref{eq: PDE for w mu} is of class $C^{2,\beta}_{\lambda_0}(\overline{\Omega})$ and there holds 
    \begin{equation}
    \label{eq: uniform estimate w mu}
        \|w_\mu\|_{C^{2,\beta}(\overline{\Omega})}\lesssim \mu^{q-p}
    \end{equation}
    for any $0<\beta<\min(1,\alpha)$ (see proof of Proposition \ref{prop: asymptotic expansion}). Thus, we deduce that the function $R_\mu=\mu^{p-q}w_\mu$ are uniformly bounded in $C^{2,\beta}(\overline{\Omega})$ for $\mu\geq \mu_0$, with $\mu_0\geq 1$ large. By \eqref{eq: uniform estimate w mu} and the usual compactness argument we deduce that there exists $R_v\in C^{2,\beta}(\overline{\Omega})$ such that
    \[
        R_\mu\to R_v\text{ in }C^{0,\gamma}(\overline{\Omega})
    \]
    as $\mu\to\infty$ (up to the extraction of a subsequence), for any $0<\gamma<\beta$. This function $R_v$ again satisfies \eqref{eq: PDE for R0 p>q} (as $p>q$ and $w_\mu\to 0$ in $C^{2,\beta}(\overline{\Omega})$) and hence by elliptic regularity theory it follows that $R_v\in C^{2,\alpha}(\overline{\Omega})$ and the full sequence $R_\mu$, $\mu\geq \mu_0$, converges to $R_v$. This already ensures the desired asymptotic expansion \eqref{eq: asymptotics p>q}.
\end{proof}

\section{DN map of the double phase problem}
\label{sec: DN map double phase}

   In this section, we rigorously introduce the Dirichlet to Neumann map related to double phase problem \eqref{eq: PDE double phase}, which we defined formally in  \eqref{eq: formal DN map}. 

   First, let us note the following simple lemma.

   \begin{lemma}
   \label{eq: same boundary data}
       Let $\Omega\subset\R^n$ be a bounded Lipschitz domain, $1<p\leq q<\infty$ and $a\in L^{\infty}(\Omega)$ a nonnegative function. Assume that $u_j\in W^{1,p,q}_a(\Omega)$ is the unique solution to \eqref{eq: PDE double phase} with $f=f_j\in W^{1,q}(\Omega)$ for $j=1,2$ (see~Theorem \ref{thm: well-posedness double phase}). If $f_1=f_2$ on $\partial\Omega$, then $u_1=u_2$ in $\Omega$. Hence, in particular, for any $f\in W^{1-1/q,q}(\partial\Omega)$ there exists a unique solution $u\in W^{1,p,q}_a(\Omega)$ to \eqref{eq: PDE double phase}. Furthermore, we have the following continuity estimates
       \begin{equation}
        \label{eq: energy estimate W 1p bdry value}
            \|u\|_{W^{1,p}(\Omega)}\leq C(\|f\|_{W^{1-1/q,q}(\partial\Omega)}+\|f\|_{W^{1-1/q,q}(\partial\Omega)}^{q/p})
        \end{equation}
        and
       \begin{equation}
       \label{eq: continuity estimate bdry value}
           \|u\|_{W^{1,p,q}_a(\Omega)}\leq C(\|f\|_{W^{1-1/q,q}(\partial\Omega)}+\|f\|_{W^{1-1/q,q}(\partial\Omega)}^{p/q}+\|f\|_{W^{1-1/q,q}(\partial\Omega)}^{q/p}).
       \end{equation}
   \end{lemma}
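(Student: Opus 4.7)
The plan is to reduce everything to Theorem \ref{thm: well-posedness double phase} once we recognize that the Dirichlet problem only sees the boundary trace, not the particular extension chosen.

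First, I would handle the statement that $u_1 = u_2$ whenever $f_1 = f_2$ on $\partial\Omega$. The key observation is that $f_1 - f_2 \in W^{1,q}(\Omega)$ has vanishing trace, so since $q \ge p$ we get $f_1 - f_2 \in W^{1,p}_0(\Omega)$; combined with $a \in L^\infty(\Omega)$, which forces $a|\nabla (f_1 - f_2)|^q \in L^1(\Omega)$, this places $f_1 - f_2 \in W^{1,p,q}_{a,0}(\Omega)$. Consequently the affine sets $M_{f_1} \cap W^{1,p,q}_a(\Omega)$ and $M_{f_2} \cap W^{1,p,q}_a(\Omega)$ coincide, and Theorem \ref{thm: well-posedness double phase} identifies $u_1$ and $u_2$ as the unique minimizer of $\mathcal{F}(\cdot;\Omega)$ on that common set.

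Next, for general $f \in W^{1-1/q,q}(\partial\Omega)$ I would invoke surjectivity of the trace operator $W^{1,q}(\Omega) \to W^{1-1/q,q}(\partial\Omega)$ to select an extension $F \in W^{1,q}(\Omega)$ with
\begin{equation}
\|F\|_{W^{1,q}(\Omega)} \le C \|f\|_{W^{1-1/q,q}(\partial\Omega)},
\end{equation}
and then apply Theorem \ref{thm: well-posedness double phase} with boundary datum $F$ to obtain a solution $u \in W^{1,p,q}_a(\Omega)$ whose trace on $\partial\Omega$ equals $f$. Uniqueness is immediate from the previous paragraph applied to any two such extensions.

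Finally, the continuity estimates \eqref{eq: energy estimate W 1p bdry value} and \eqref{eq: continuity estimate bdry value} follow by applying the corresponding bounds \eqref{eq: energy estimate W 1p} and \eqref{eq: energy estimate} from Theorem \ref{thm: well-posedness double phase} to this extension $F$, and then controlling each right-hand side term via the trace extension inequality together with $\|F\|_{L^p(\partial\Omega)} \le C \|F\|_{L^q(\partial\Omega)} = C\|f\|_{L^q(\partial\Omega)} \le C \|f\|_{W^{1-1/q,q}(\partial\Omega)}$, using $q \ge p$ and boundedness of $\partial\Omega$. No step is really an obstacle here; the only mild care needed is to remember that the upper bounds in \eqref{eq: energy estimate W 1p} and \eqref{eq: energy estimate} are monotone increasing in $\|\nabla f\|_{L^q(\Omega)}$ and in $\|f\|_{L^p(\partial\Omega)}$ or $\|f\|_{L^q(\partial\Omega)}$, so passing from the specific extension $F$ to the trace norm of $f$ is purely mechanical.
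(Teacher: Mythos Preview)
Your proposal is correct. The second and third parts (existence for boundary data in $W^{1-1/q,q}(\partial\Omega)$ and the continuity estimates) match the paper's proof essentially verbatim: both invoke the surjectivity and boundedness of the trace/extension operator and then feed the extension $F$ into the estimates \eqref{eq: energy estimate W 1p} and \eqref{eq: energy estimate} of Theorem~\ref{thm: well-posedness double phase}.

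The one genuine difference is in the first part. The paper derives $u_1=u_2$ from the comparison principle (Proposition~\ref{prop: comparison}): since $f_1=f_2$ on $\partial\Omega$ one has both $f_1\geq f_2$ and $f_2\geq f_1$ on $\partial\Omega$, hence $u_1\geq u_2$ and $u_2\geq u_1$. You instead observe that $f_1-f_2\in W^{1,p}_0(\Omega)$, so the affine constraint sets $M_{f_1}$ and $M_{f_2}$ coincide, and then invoke the characterization of $u_j$ as the unique minimizer of $\mathcal{F}(\cdot;\Omega)$ over that common set from Theorem~\ref{thm: well-posedness double phase}. Your route is more elementary in that it avoids the comparison principle entirely and relies only on the variational characterization already established; the paper's route, by contrast, would extend to settings where one only has a PDE formulation without a convex variational structure. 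Either argument is perfectly adequate here.
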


   \begin{proof}
       The first part follows from the comparison principle (Proposition \ref{prop: comparison}). The second part of the lemma is an immediate consequence of Theorem \ref{thm: well-posedness double phase}, the previous established independence of the extension of the boundary value $f|_{\partial\Omega}$ and the fact that the image of the trace operator $\text{tr}\colon W^{1,q}(\Omega)\to L^q(\partial\Omega)$ coincides with $W^{1-1/q,q}(\partial\Omega)$. Finally, to see the continuity estimates, suppose that $F\in W^{1,q}(\Omega)$ is an extension of $f\in W^{1-1/q,q}(\partial\Omega)$. By applying \eqref{eq: energy estimate W 1p} and \eqref{eq: energy estimate}, respectively, and using the continuity of the extension operator $E\colon W^{1-1/q,q}(\partial\Omega)\to W^{1,q}(\Omega)$, we get the estimates \eqref{eq: energy estimate W 1p bdry value} and \eqref{eq: continuity estimate bdry value}. Therefore, we can conclude the proof.
   \end{proof}

   \begin{proposition}[DN map]
   \label{prop: Dn map}
       Let $\Omega\subset\R^n$ be a bounded Lipschitz domain, $1<p\leq q<\infty$ and $a\in L^{\infty}(\Omega)$ a nonnegative function. Then we define the \emph{Dirichlet to Neumann (DN) map} $\Lambda_a$ by
       \begin{equation}
       \label{eq: rigorous DN map}
           \langle \Lambda_a f,g\rangle\vcentcolon = \int_{\Omega}\left(|\nabla u|^{p-2}\nabla u+a|\nabla u|^{q-2}\nabla u\right)\cdot\nabla \omega\,dx
       \end{equation}
       for all $f,g\in W^{1-1/q,q}(\partial\Omega)$, where $u\in W^{1,p,q}_a(\Omega)$ is the unique solution of \eqref{eq: PDE double phase} and $\omega\in W^{1,q}(\Omega)$ is an extension of $g\in W^{1-1/q,q}(\partial\Omega)$. 
   \end{proposition}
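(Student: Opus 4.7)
The plan is to show that the right-hand side of \eqref{eq: rigorous DN map} is a finite number depending only on $f$ and $g$ (and not on the chosen extension $\omega$ of $g$). This requires two ingredients: (a) the integrand lies in $L^1(\Omega)$ and (b) the definition is independent of the extension. Once these are in place, the existence of a unique solution $u\in W^{1,p,q}_a(\Omega)$ attached to $f$ comes directly from Lemma \ref{eq: same boundary data}, which also supplies the continuity estimates \eqref{eq: energy estimate W 1p bdry value}--\eqref{eq: continuity estimate bdry value} that we will use.

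\textbf{Integrability.} Split the integrand into the $p$ phase part $|\nabla u|^{p-2}\nabla u\cdot\nabla\omega$ and the $(p,q)$ phase part $a|\nabla u|^{q-2}\nabla u\cdot\nabla\omega$, and estimate each by H\"older's inequality. For the first piece, since $\omega\in W^{1,q}(\Omega)\hookrightarrow W^{1,p}(\Omega)$ (using $p\leq q$ and that $\Omega$ is bounded), we have $|\nabla u|^{p-1}\in L^{p'}(\Omega)$ by $u\in W^{1,p}(\Omega)$ and $\nabla\omega\in L^p(\Omega)$, which yields a finite contribution. For the second piece, the key observation is that
\[
\int_\Omega \bigl(a|\nabla u|^{q-1}\bigr)^{q'}\,dx=\int_\Omega a^{q'}|\nabla u|^q\,dx\leq \|a\|_{L^\infty(\Omega)}^{q'-1}\int_\Omega a|\nabla u|^q\,dx,
\]
which is finite because $u\in W^{1,p,q}_a(\Omega)$. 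Hence $a|\nabla u|^{q-2}\nabla u\in L^{q'}(\Omega)$, and pairing with $\nabla\omega\in L^q(\Omega)$ gives a finite integral.

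\textbf{Independence of the extension.} Suppose $\omega_1,\omega_2\in W^{1,q}(\Omega)$ both satisfy $\omega_j|_{\partial\Omega}=g$ in the trace sense. Then $\omega_1-\omega_2\in W^{1,q}_0(\Omega)$, and since $p\leq q$ with $\Omega$ bounded we have $\omega_1-\omega_2\in W^{1,p}_0(\Omega)$; combined with $a\in L^\infty(\Omega)$ and $\nabla(\omega_1-\omega_2)\in L^q(\Omega)$, this gives $\omega_1-\omega_2\in W^{1,p,q}_{a,0}(\Omega)$. Testing the weak formulation \eqref{eq: weak solutions double phase} for $u$ against $\varphi=\omega_1-\omega_2$ yields
\[
\int_\Omega \bigl(|\nabla u|^{p-2}\nabla u+a|\nabla u|^{q-2}\nabla u\bigr)\cdot\nabla(\omega_1-\omega_2)\,dx=0,
\]
so the value in \eqref{eq: rigorous DN map} is the same for $\omega_1$ and $\omega_2$. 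Since $u$ is itself uniquely determined by $f$ (Lemma \ref{eq: same boundary data}), the pairing $\langle\Lambda_a f,g\rangle$ depends only on $f$ and $g$.

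\textbf{Continuity.} As a bonus one can combine the H\"older estimates of the first step with the continuity bounds \eqref{eq: energy estimate W 1p bdry value}--\eqref{eq: continuity estimate bdry value} and a bounded extension operator $W^{1-1/q,q}(\partial\Omega)\to W^{1,q}(\Omega)$ to obtain an estimate of $|\langle\Lambda_a f,g\rangle|$ in terms of a polynomial of $\|f\|_{W^{1-1/q,q}(\partial\Omega)}$ and $\|g\|_{W^{1-1/q,q}(\partial\Omega)}$, so $\Lambda_a$ is a (nonlinear) bounded operator from $W^{1-1/q,q}(\partial\Omega)$ into its dual. No step here is expected to be substantial; the only mild subtlety is keeping track of the inclusion $W^{1,q}_0\subset W^{1,p,q}_{a,0}$ so that $\omega_1-\omega_2$ is an admissible test function.
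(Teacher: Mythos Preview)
Your proof is correct and follows essentially the same approach as the paper: existence and uniqueness of $u$ from Lemma~\ref{eq: same boundary data}, integrability via H\"older's inequality (the paper bundles the $(p,q)$ phase term using the weighted norm $\|\nabla u\|_{L^q(\Omega;a)}^{q-1}\|\nabla\omega\|_{L^q(\Omega;a)}$ rather than your direct check that $a|\nabla u|^{q-2}\nabla u\in L^{q'}$, but this is cosmetic), and independence of the extension by testing the weak formulation against $\omega_1-\omega_2\in W^{1,p,q}_{a,0}(\Omega)$. Your additional continuity paragraph goes slightly beyond what the paper proves here, but it is correct and harmless.
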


    \begin{proof}
        First, recall that by Lemma \ref{eq: same boundary data} there exists a unique solution $u\in W^{1,p,q}_a(\Omega)$ to \eqref{eq: PDE double phase}, for any $f\in W^{1-1/q,q}(\partial\Omega)$. Next, let $\omega\in W^{1,q}(\Omega)$ be any extension of $g\in W^{1-1/q,q}(\partial\Omega)$. Note that the integral in \eqref{eq: rigorous DN map} exists by H\"older's inequality with $\frac{p-1}{p}+\frac{1}{p}=1$ and $\frac{q-1}{q}+\frac{1}{q}=1$, respectively. In fact, we may estimate
        \[
        \begin{split}
            |\langle \Lambda_a f,g\rangle|&\leq \|\nabla u\|_{L^p(\Omega)}^{p-1}\|\nabla \omega\|_{L^p(\Omega)}+\|\nabla u\|_{L^q(\Omega;a)}^{q-1}\|\nabla \omega\|_{L^q(\Omega;a)}\\
            &\leq (\|\nabla u\|_{L^p(\Omega)}^{p-1}+\|a\|_{L^{\infty}(\Omega)}^{1/q}\|\nabla u\|_{L^q(\Omega;a)}^{q-1})\|\nabla \omega\|_{L^q(\Omega)}<\infty.
        \end{split}
        \] 
        Moreover, the value of $\langle \Lambda_a f,g\rangle$ does not depend on the extension $\omega\in W^{1,q}(\Omega)$ of $g$ as $u$ solves \eqref{eq: PDE double phase} (see~\eqref{eq: weak solutions double phase}).
    \end{proof}

    \begin{remark}[DN map for $p>q$]
    \label{remark: DN map for p>q}
        The results of Lemma  \ref{eq: same boundary data} and Proposition \ref{prop: Dn map} continue to hold for $p>q$, when one chooses boundary values $f,g\in W^{1-1/p,p}(\partial\Omega)$ (see~\ref{solution space}--\ref{maximum principle}).
    \end{remark}

\section{Linearization of the $p$-Laplace equation}
\label{sec: p-Laplace linearization}

This section is concerned with the construction of families of solutions $(v_\tau)$ to the $p$-Laplace equation
\begin{equation}
\label{eq-p-Laplace}
 \begin{cases}
            \Div(|\nabla v|^{p-2}\nabla v)=0&\text{ in }\Omega,\\
            v=f&\text{ on }\partial\Omega
        \end{cases}
\end{equation}
with a prescribed zeroth order term. More precisely, we shall prove the following lemma.
\begin{lemma}
\label{lemma: family of p harmonic functions}
    Let $\Omega\subset\R^n$ be a smoothly bounded domain and $1<p<\infty$ and $0<\alpha\leq 1$. Moreover, suppose that $v_0\in C^{\infty}(\overline{\Omega})$ is any $p$-harmonic function without critical points, $\phi\in C^{\infty}(\partial\Omega)$ and $V$ is the solution to the Dirichlet problem 
    \begin{equation}
    \label{eq-V}
    \begin{cases}
        \Div(A^p_{v_0}\nabla V)=0& \text{ in }\Omega,\\
        V=\phi&\text{ on }\partial\Omega,
    \end{cases}
    \end{equation}
    then there exists an exponent $0<\beta'<1$ with the following property: For any $0<\gamma'<\beta'<1$, there exists $\tau_0>0$ and a family $(v_\tau)_{\tau\in [-\tau_0,\tau_0]}\subset C^{1,\gamma'}(\overline{\Omega})$ of $p$-harmonic functions that have the asymptotic expansion
    \begin{equation}
    \label{eq: v tau expansion}
        v_\tau =v_0+\tau V+o(\tau),
    \end{equation}
    in the sense of $C^{1,\gamma'}(\overline{\Omega})$ as $\tau\to 0$.
\end{lemma}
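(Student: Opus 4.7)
The plan is to define $v_\tau$ as the unique $p$-harmonic function with boundary data $v_0|_{\partial\Omega}+\tau\phi$, and then show it admits the stated expansion. Existence and uniqueness follow from well-posedness of the $p$-Laplace Dirichlet problem (Theorem \ref{thm: well-posedness double phase} with $a\equiv 0$), while $C^{1,\beta'}(\overline{\Omega})$ regularity up to the boundary for $v_\tau$, with some universal $\beta'=\beta'(n,p)\in(0,1)$, is the classical result of Tolksdorf/Lewis/Lieberman for the $p$-Laplacian on smooth domains with smooth boundary data. One also obtains uniform bounds in $C^{1,\beta'}(\overline{\Omega})$ for $\tau$ in a bounded set. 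By compactness of the embedding $C^{1,\beta'}(\overline{\Omega})\hookrightarrow C^{1,\gamma'}(\overline{\Omega})$ and uniqueness of $v_0$, one concludes $v_\tau\to v_0$ in $C^{1,\gamma'}(\overline{\Omega})$ as $\tau\to0$.

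Once this convergence is established, the assumption that $v_0$ has no critical points and the fact $|\nabla v_\tau|\to|\nabla v_0|$ uniformly imply that for $|\tau|\leq\tau_0$ with $\tau_0$ sufficiently small,
\[
\inf_{\overline{\Omega}}|\nabla v_\tau|\geq \lambda_0/2,
\]
where $\lambda_0$ is defined as in \eqref{eq: def of lambda0}. Thus on this scale the $p$-Laplace equation is uniformly elliptic and smoothly linearizable. Writing $v_\tau=v_0+\tau V+r_\tau$ with $r_\tau|_{\partial\Omega}=0$, and using the fundamental theorem of calculus as in \eqref{Taylor}, the $p$-harmonicity of $v_\tau$ becomes
\[
0=\Div\!\left(\int_0^1 \nabla_\xi J^p(\nabla v_0+t(\tau\nabla V+\nabla r_\tau))\,dt\,(\tau\nabla V+\nabla r_\tau)\right).
\]
Because $v_0$ is $p$-harmonic and $V$ solves \eqref{eq-V}, subtracting off the linearization at $v_0$ yields an equation of the form
\[
\begin{cases}
\Div(A^p_{v_0}\nabla r_\tau)=-\Div(F_\tau)& \text{ in }\Omega,\\
r_\tau=0&\text{ on }\partial\Omega,
\end{cases}
\]
where
\[
F_\tau=\left(\int_0^1\!\!\bigl[\nabla_\xi J^p(\nabla v_0+t(\tau\nabla V+\nabla r_\tau))-\nabla_\xi J^p(\nabla v_0)\bigr]dt\right)(\tau\nabla V+\nabla r_\tau).
\]

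Since $V\in C^{1,\beta'}(\overline{\Omega})$ (by Schauder theory applied to the linear elliptic problem \eqref{eq-V}, whose coefficients lie in $C^{0,\beta'}(\overline{\Omega})$ by Claim \ref{claim: properties of A eps V} together with Remark \ref{rem: uniform bound for small C 1 beta functions}), the uniform lower bound on $|\nabla v_\tau|$ lets one apply Lemma \ref{auxiliary lemma 2} to control $\nabla_\xi J^p$ in H\"older norms. This gives
\[
\|F_\tau\|_{C^{0,\gamma'}(\overline{\Omega})}\lesssim \bigl(\tau\|V\|_{C^{1,\gamma'}}+\|r_\tau\|_{C^{1,\gamma'}}\bigr)\bigl(\tau\|V\|_{C^{1,\gamma'}}+\|r_\tau\|_{C^{1,\gamma'}}\bigr),
\]
i.e.\ $F_\tau$ is quadratically small in the natural parameter. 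Standard Schauder estimates (\cite[Thm.~6.14]{GiTru}) applied to the equation for $r_\tau$ then give
\[
\|r_\tau\|_{C^{1,\gamma'}(\overline{\Omega})}\lesssim \|F_\tau\|_{C^{0,\gamma'}(\overline{\Omega})},
\]
and a simple absorbing/bootstrap argument closes the estimate to yield $\|r_\tau\|_{C^{1,\gamma'}(\overline{\Omega})}=o(\tau)$ as $\tau\to0$, which is the desired expansion \eqref{eq: v tau expansion}.

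The main obstacle is ensuring that one genuinely gets $o(\tau)$ rather than merely $O(\tau)$. This hinges on two inputs: (i) the $C^{1,\gamma'}$-convergence $v_\tau\to v_0$, which guarantees that the linearization coefficient $A^p_{v_\tau}$ converges to $A^p_{v_0}$ and makes the integrand in $F_\tau$ small in H\"older norm, and (ii) uniform ellipticity of $A^p_{v_0}$ on $\overline{\Omega}$, which relies on the no-critical-point assumption for $v_0$ and justifies the Schauder estimate with a constant independent of $\tau$. The technical book-keeping for (i) essentially repeats the analysis carried out for Claim \ref{claim: properties of A eps V} in the proof of Proposition \ref{prop: asymptotic expansion}.
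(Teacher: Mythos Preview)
Your argument is essentially correct and follows a route that is close to, but slightly more direct than, the paper's. Both proofs begin identically: define $v_\tau$ as the $p$-harmonic function with boundary data $v_0+\tau\phi$, invoke Lieberman's boundary regularity for uniform $C^{1,\beta'}$ bounds, and use compactness plus uniqueness to get $v_\tau\to v_0$ in $C^{1,\gamma'}$, hence a uniform lower bound on $|\nabla v_\tau|$. From here the two arguments diverge. The paper sets $V_\tau\vcentcolon=(v_\tau-v_0)/\tau$, derives the linear equation $\Div(\mathcal A^\tau\nabla V_\tau)=0$ with $\tau$-dependent coefficients, obtains uniform $C^{1,\delta'}$ bounds from \cite[Theorems~8.33--8.34]{GiTru} together with the maximum principle, and then uses Arzel\`a--Ascoli plus uniqueness of solutions to \eqref{eq-V} to conclude $V_\tau\to V$; this yields $o(\tau)$ but no explicit rate. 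You instead set $r_\tau\vcentcolon=v_\tau-v_0-\tau V$, derive the equation $\Div(A^p_{v_0}\nabla r_\tau)=-\Div F_\tau$ with $F_\tau$ quadratic in $\tau\nabla V+\nabla r_\tau$, and close via an absorption inequality. Your route in fact delivers $\|r_\tau\|_{C^{1,\gamma'}}=O(\tau^2)$, stronger than what is stated, at the cost of needing a second application of the fundamental theorem of calculus (to control $\nabla_\xi J^p(\cdot+h)-\nabla_\xi J^p(\cdot)$ in H\"older norm via $\nabla_\xi^2 J^p$, cf.\ \eqref{eq: second order derivatives J}).

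Two minor points to tighten. First, the reference \cite[Theorem~6.14]{GiTru} is for non-divergence form equations in $C^{2,\alpha}$; since you only know $v_\tau\in C^{1,\beta'}$, the appropriate Schauder estimate here is \cite[Theorem~8.33]{GiTru} for divergence-form equations, combined with an $L^\infty$ bound from \cite[Theorem~8.16]{GiTru}. Second, the absorption step deserves one line: from $\rho_\tau\vcentcolon=\|r_\tau\|_{C^{1,\gamma'}}\leq C(\tau+\rho_\tau)^2$ and $\rho_\tau\to 0$ (which you already have from $v_\tau\to v_0$), one first gets $\rho_\tau\leq\tau$ for small $\tau$ and then $\rho_\tau\leq 4C\tau^2$. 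Neither issue is a genuine gap.
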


\begin{remark}
    Note that \eqref{eq-p-Laplace} has smooth solutions without critical points in $\Omega$. For example, any non-trivial affine function is such a solution. 
\end{remark}

\begin{remark}
    Observe that if $v_0\in C^{\infty}(\overline{\Omega})$ is a fixed $p$-harmonic function without critical points and $V$ is any smooth solution of $\Div(A^p_{v_0}\nabla V)=0$ in $\Omega$, then we can always find a corresponding family $(v_\tau)$ of $p$-harmonic functions having the asymptotic expansion \eqref{eq: v tau expansion}. Furthermore, let us point out that the assertion of Lemma \ref{lemma: family of p harmonic functions} continues to hold when $v_0,V\in C^{1,\nu}(\overline{\Omega})$ for some $\nu>0$, but for our purposes it is enough to work in the smooth setting.
\end{remark}

\begin{proof}
    Let us start by recalling that if $f\in C^{1,\alpha}(\partial\Omega)$ satisfies $\|f\|_{C^{1,\alpha}(\partial\Omega)}\leq C_0$ and $u$ is a bounded weak solution of 
    \begin{equation}
    \label{eq: Lieberman}
    \begin{cases}
        \Div(|\nabla u|^{p-2}\nabla u)=0& \text{ in }\Omega,\\
        u=f&\text{ on }\partial\Omega
    \end{cases}
    \end{equation}
    with $\|u\|_{L^{\infty}(\Omega)}\leq C_1$, then there exist positive constants $\beta'=\beta'(\alpha,p,n)\in (0,1)$, $C=C(\alpha, p,C_0,C_1,n,\Omega)>0$ such that
    \begin{equation}
    \label{eq: uniform estimate Lieberman}
        \|u\|_{C^{1,\beta'}(\overline{\Omega})}\leq C.
    \end{equation}
    This is a special case of \cite[Theorem 1]{Lieberman1988} with $m=p-2$, $\kappa=0$, $\lambda=\min(1,p-1)$ and $\Lambda=\max(1,p-1)$ (see~\eqref{uniform ellipticity of J}).
    
 Now, suppose that $v_0\in C^\infty(\overline{\Omega})$ is a smooth solution of \eqref{eq-p-Laplace} without critical points and $\phi\in C^\infty(\partial\Omega)$ is a given boundary value. Furthermore, let $v_\tau\in W^{1,p}(\Omega)$, $-1<\tau<1$, be the unique solution to 
\begin{equation}\label{eq-tau}
\begin{cases}
\Div(|\nabla v_\tau|^{p-2}\nabla v_\tau)=0&\text{ in }\Omega,\\
 v_\tau=v_0+\tau\phi&\text{ on }\partial\Omega.
\end{cases}
\end{equation}
Note that by Theorem \ref{thm: max principle} with $p=q$, $a=0$ we have
\begin{equation}
\label{eq: L infty bound on v tau}
    \|v_\tau\|_{L^{\infty}(\Omega)}\leq \|v_0\|_{L^{\infty}(\Omega)}+\|\Phi\|_{L^{\infty}(\partial\Omega)},
\end{equation}
where $\Phi\in C^{\infty}(\overline{\Omega})$ is any smooth extension of $\phi\in C^{\infty}(\partial\Omega)$. Since the boundary data $v_0+\tau\phi$ satisfy $\|v_0+\tau\phi\|_{C^{1,\alpha}(\partial\Omega)}\leq \|v_0\|_{C^{1,\alpha}(\partial\Omega)}+\|\phi\|_{C^{1,\alpha}(\partial\Omega)}$, the solutions $v_\tau$ are uniformly bounded with respect to $\tau$ in $C^{1,\beta'}(\overline{\Omega})$ by the aforementioned result of Lieberman. Next, fix some $\gamma'<\delta'<\beta'$. By the theorem of Arzel\'a--Ascoli, there must exist $\widetilde{v}_0\in C^{1,\beta'}(\overline{\Omega})$ such that, after passing to a subsequence, $v_\tau\to \widetilde{v}_0$ in $C^{1,\delta'}(\overline{\Omega})$ as $\tau\to 0$. Taking the limit of \eqref{eq-tau} (e.g. in the sense of $\mathscr{D}'(\Omega)$) and using the trace theorem, it follows that $\widetilde{v}_0$ solves \eqref{eq-p-Laplace}, with the same Dirichlet data as $v_0$. Therefore, by Theorem \ref{thm: well-posedness double phase}, we may deduce that $\widetilde{v}_0=v_0$ and hence $v_\tau\to v_0$ in $C^{1,\delta'}(\overline{\Omega})$ as $\tau\to 0$. This also shows that $v_\tau$ has no critical points for $|\tau|\leq \tau_0$, when $\tau_0>0$ is small enough. 

Let $V_\tau\in C^{1,\beta'}(\overline{\Omega})$ be defined by the Ansatz
\begin{equation}
\label{eq: expansion v tau final proof}
v_\tau=v_0+\tau V_\tau.
\end{equation}
Then, similarly as in the proof of Proposition \ref{prop: asymptotic expansion}, we see that there holds
\begin{equation}
\label{eq: expansion p laplacian}
\Div(|\nabla v_\tau|^{p-2}\nabla v_\tau)=\Div(|\nabla v_0|^{p-2}\nabla v_0)+\tau\Div(\mathcal{A}^\tau\nabla V_\tau),
\end{equation}
where
\begin{equation}
\mathcal{A}^\tau=\int_0^1\nabla_\xi J^p(\nabla v_0+t\tau\nabla V_\tau)\,dt.
\end{equation}
Therefore, as $v_\tau$ and $v_0$ are $p$-harmonic, $V_\tau$ solves
\begin{equation}
\label{eq: V tau equation}
\begin{cases}
    \Div(\mathcal{A}^\tau\nabla V_\tau)=0& \text{ in }\Omega,\\
    V_\tau=\phi&\text{ on }\partial\Omega.
\end{cases}
\end{equation}
Since $v_0$ does not have critical points and $\tau V_\tau\to 0$ in $C^{1,\delta'}(\overline{\Omega})$ as $\tau\to 0$, it follows (by Claim \ref{claim: properties of A eps V} and Remark \ref{rem: uniform bound for small C 1 beta functions}, with $a\equiv0$) that there exists a $\tau_0>0$ such that the matrices $\mathcal{A}^\tau\in C^{0,\delta'}(\overline{\Omega})$ are uniformly elliptic and furthermore they have ellipticity and $\|\cdot\|_{C^{0,\delta'}(\overline{\Omega})}$ bounds that are independent of $\tau\in[-\tau_0,\tau_0]$. From \cite[Theorems 8.33 \& 8.34]{GiTru} we deduce that $V_\tau\in C^{1,\beta'}(\overline{\Omega})$ satisfies
\begin{equation}
\label{eq: preliminary Holder estimate V tau}
    \|V_\tau\|_{C^{1,\delta'}(\overline{\Omega})}\leq C(\|V_\tau\|_{L^{\infty}(\Omega)}+\|\Phi\|_{C^{1,\delta'}(\overline{\Omega})}),
\end{equation}
where $C>0$ only depends on the uniform the ellipticity constant of $\mathcal{A}_\tau$, a uniform upper bound of $\|\mathcal{A}_\tau\|_{C^{0,\delta'}(\overline{\Omega})}$, $\Omega$ and $n$. Hence, the maximum principle for (uniformly) elliptic PDEs \cite[Proposition 2.30]{Elliptic-PDEs-Xavi} ensures that 
\begin{equation}
\label{eq: L infinity estimate V tau}
    \|V_\tau\|_{L^{\infty}(\Omega)}\leq \|\phi\|_{L^{\infty}(\partial\Omega)}
\end{equation}
and thus $V_\tau$ is uniformly bounded with respect to $\tau\in[-\tau_0,\tau_0]$ in $C^{1,\delta'}(\overline{\Omega})$.

By the theorem of Arzel\'a--Ascoli, there must exist a $V\in C^{1,\delta'}(\overline{\Omega})$ such that $V_\tau\to V$ in $C^{1,\gamma'}(\overline{\Omega})$ as $\tau\to 0$, after passing to a subsequence. Passing to the limit in \eqref{eq: V tau equation} shows that $V$ must be a weak solution to \eqref{eq-V}. For the previous convergence assertion, we use the fact that $V\mapsto \nabla_\xi J^p(\nabla v_0+t\nabla V)$ is continuous from $C^1(\overline{\Omega})$ to $C(\overline{\Omega})$ (when $\|V\|_{C^{1,\gamma'}(\overline{\Omega})}\leq \lambda_0/2$, see Remark \ref{rem: uniform bound for small C 1 beta functions}), the uniform bound of $\nabla_\xi J^p(\nabla v_0+t\tau\nabla V_\tau)$ in $t$ provided by Claim \ref{claim: properties of A eps V} and Remark \ref{rem: uniform bound for small C 1 beta functions} for small $\tau$ as well as the dominated convergence theorem. Since solutions to \eqref{eq-V} are unique, it follows that 
\begin{equation}
\label{eq: convergence of V tau}
    V_\tau\to V\text{ in }C^{1,\gamma'}(\overline{\Omega})
\end{equation}
as $\tau\to 0$, without passing to a subsequence. Hence, we deduce that
\[
    v_\tau=v_0+\tau V+\tau (V_{\tau}-V)=  v_0+\tau V+o(\tau)
\]
in $C^{1,\gamma'}(\overline{\Omega})$ as $\tau\to 0$. This concludes the proof.
\end{proof}

\section{Proof of Theorem \ref{thm: main theorem double phase}}
\label{sec: proof of thm 1.1}

In this section, we establish the proof of Theorem \ref{thm: main theorem double phase}. In Section \ref{subsec: Inverse double phase problem for p smaller q}, we prove Theorem \ref{thm: main theorem double phase} in the range $p<q$ and in Section \ref{subsec: Inverse double phase problem for p bigger q} in the range $p>q$. 

\subsection{Reconstruction of \texorpdfstring{$a$}{a} in the range \texorpdfstring{$p<q$}{p smaller q}}
\label{subsec: Inverse double phase problem for p smaller q}

In this section, we detail our reconstruction procedure of the coefficient $a$ in the double phase problem \eqref{eq: PDE double phase} when $p<q$.

\begin{proof}[Proof of Theorem \ref{thm: main theorem double phase} for $p<q$]

Let $u_\eps\in W^{1,p,q}_a(\Omega)$, $\eps>0$, be the unique solutions to \eqref{eq: epsilon PDE} for some $p$-harmonic function $v\in C^{\infty}(\overline{\Omega})$ without critical points and let $0<\gamma<\beta<1$ be some exponents as in Proposition \ref{prop: asymptotic expansion}, which we shall fix at a later stage of the proof. We claim that for any $g\in W^{1-1/q,q}(\partial\Omega)$, with extension $\omega\in W^{1,q}(\Omega)$, there holds
\begin{equation}
\label{eq: DN map expansion}
\begin{split}
\langle \Lambda_a\eps v, g\rangle&=\eps^{p-1} \int_{\Omega} |\nabla v|^{p-2}\nabla v\cdot\nabla \omega\,dx\\
&\quad +\eps^{q-1}\int_{\Omega} (A^p_v\nabla R_v +a|\nabla v|^{q-2}\nabla v)\cdot\nabla \omega\,dx
+o(\eps^{q-1})
\end{split}
\end{equation}
as $\eps\to 0$, where $R_v\in C^{2,\alpha}(\overline{\Omega})$ is the unique solution of \eqref{eq: PDE for R0} (see~Proposition \ref{prop: asymptotic expansion}).

Indeed, recalling from the proof of Proposition \ref{prop: asymptotic expansion} that we may expand $u_\eps$ as
\begin{equation}
\label{eq: asymptotic expansion proof thm 1}
    u_\eps=\eps (v+w_{v,\eps})\text{ with }w_{v,\eps}=\eps^{q-p}R_{v,\eps}\text{ and } R_{v,\eps}\to R_v\text{ in }C^{2,\gamma}(\overline{\Omega}),
\end{equation}
we obtain by formula \eqref{Taylor} the identities
\begin{equation}
\label{eq: expansion p phase thm 1}
\begin{split}
|\nabla u_\eps|^{p-2}\nabla u_\eps&=\eps^{p-1}|\nabla v|^{p-2}\nabla v+\eps^{q-1} A^p_v(w_{v,\eps})\nabla R_{v,\eps}\\
&=\eps^{p-1}|\nabla v|^{p-2}\nabla v+\eps^{q-1} A^p_v\nabla R_v \\
&\quad +\eps^{q-1}\left[A^p_v\nabla (R_{v,\eps}-R_v)+(A^p_v(w_{v,\eps})-A^p_v)\nabla R_{v,\eps}  \right]
\end{split}
\end{equation}
and
\begin{equation}
\label{eq: expansion p,q phase thm 1}
a|\nabla u_\eps|^{q-2}\nabla u_\eps=\eps^{q-1}a|\nabla v|^{q-2}\nabla v+\eps^{q-1}\eps^{q-p} aA^q_v(w_{v,\eps})\nabla R_{v,\eps}.
\end{equation}
The matrices $A^r_v(w_{v,\eps})$, $1<r<\infty$, appearing in \eqref{eq: expansion p phase thm 1}--\eqref{eq: expansion p,q phase thm 1}, are given by \eqref{eq: coeff p and p,q phase} and we added the subscript $v$ on these matrices and the functions $w_\eps,R_\eps$ to highlight their dependence on $v$. 
Since $R_{v,\eps}\to R_v$ and $w_{v,\eps}\to 0$ in $C^{2,\gamma}(\overline{\Omega})$ as $\eps\to 0$, the claim \eqref{eq: DN map expansion} follows from the definition of the DN map \eqref{eq: rigorous DN map} and the dominated convergence theorem. This shows that, for any $p$-harmonic function $v$ without critical points and any $g\in W^{1-1/q,q}(\partial\Omega)$ with extension $\omega\in W^{1,q}(\Omega)$, we can compute the quantity
\begin{equation}
\label{eq: def I}
\begin{split}
I(v,g)&\vcentcolon =\lim_{\eps\to0^+}\eps^{1-q}( \langle \Lambda_a\eps v, g\rangle- \langle\Lambda_0\eps v,g\rangle )\\
&=\int_{\Omega} (A^p_v\nabla R_v +a|\nabla v|^{q-2}\nabla v)\cdot\nabla \omega\,dx,
\end{split}
\end{equation}
from the DN map $\Lambda_a$ of the double phase problem and the DN map $\Lambda_0$ of the $p$-Laplacian $\Delta_p$.

Next, we consider the family $(v_\tau)_{\tau\in [-\tau_0,\tau_0]}$ of $p$-harmonic functions constructed in Lemma \ref{lemma: family of p harmonic functions}, which are supposed to have zeroth order term $v_0$ and first order term $V$. Thus, the previous calculation can be applied to $v_\tau$ when $\beta\vcentcolon =\beta'$ and $\gamma\vcentcolon =\gamma'<\beta'$. We wish to compute the derivative
\begin{equation}
\label{eq: def J}
    J(v_0,V,\omega)\vcentcolon =\left.\frac{d}{d\tau}\right|_{\tau=0} I(v_\tau,g)=\lim_{\tau\to 0}\frac{I(v_\tau,g)-I(v_0,g)}{\tau}.
\end{equation}
First, note that we have
\begin{equation}
\label{eq: A dot}
\begin{split}
\dot A_{v_0}^p (V)&\vcentcolon =\lim_{\tau\to0}\tau^{-1}\left(A^p_{v_\tau}-A^p_{v_0}\right)\\
&=\lim_{\tau\to0}\tau^{-1}\left(\nabla_\xi J^p(\nabla v_\tau)-\nabla_\xi J^p(\nabla v_0)\right)\\
&=(p-2)|\nabla v_0|^{p-4}(\nabla v_0\cdot\nabla V)\Big[\mathbf{1}+(p-4)\frac{\nabla v_0\otimes\nabla v_0}{|\nabla v_0|^2}\Big]\\
&\quad +(p-2)|\nabla v_0|^{p-4}( \nabla v_0\otimes\nabla V+\nabla V\otimes\nabla v_0)
\end{split}
\end{equation}
in $C^{0,\gamma}(\overline{\Omega})$. This can be seen as follows. First, using \eqref{eq: derivatives of Jj} and a straight forward calculation, we obtain
\begin{equation}
\label{eq: second order derivatives J}
    \partial_{\xi_i\xi_j}J^p_k (\xi)=(p-2)|\xi|^{p-4}(\xi_i \delta_{kj}+\xi_j\delta_{ki}+\xi_k\delta_{ij})+(p-2)(p-4)|\xi|^{p-6}\xi_i\xi_j\xi_k
\end{equation}
for all $1\leq i,j,k\leq n$ and $\xi\neq 0$. Secondly, the map $\xi\mapsto \nabla_\xi J^p(\xi)$ is locally Lipschitz away from the origin $\xi=0$ (see~\eqref{eq: estimate mikko}). Hence, the fundamental theorem of calculus ensures
\begin{equation}
\label{eq: FTC first der}
    \partial_{\xi_i}J^p_k(\eta)-\partial_{\xi_i}J^p_k(\xi)=\int_0^1 \partial_{\xi_i\xi_j}J^p_k(\xi+t(\eta-\xi))\,dt\,(\eta_j-\xi_j)
\end{equation}
for all $1\leq j,k\leq n$, as long as the straight line from $\xi$ to $\eta$ does not go through the origin. Thus, the nondegeneracy of $\nabla v_\tau$, $|\tau|\leq \tau_0$, ensures that we can apply \eqref{eq: FTC first der} to $\eta=\nabla v_\tau$ and $\xi=\nabla v_0$. Using the convergence $V_\tau\to V$ in $C^{1,\gamma}(\overline{\Omega})$ as $\tau\to 0$ and \eqref{eq: second order derivatives J} in the obtained identity, we get the formula in \eqref{eq: A dot}.

Furthermore, by \cite[Theorem 8.34]{GiTru} and $\dot A_{v_0}^p (V)\in C^{0,\gamma}(\overline{\Omega})$, we can denote by $\dot{R}\in C^{1,\gamma}(\overline{\Omega})$ the unique solution to
\begin{equation}
\label{eq: PDE for R dot}
    \begin{cases}
      \Div(A^p_{v_0}\nabla\dot R)=-\Div(\dot A_{v_0}^p(V)\nabla R_{v_0}  )-\Div(a\, A^q_{v_0}\nabla V)& \text{ in }\Omega,\\
       \dot R=0&\text{ on }\partial\Omega.
    \end{cases}
\end{equation}
Next, let us observe that using \eqref{eq: PDE for R0} we can write
\begin{equation}
\label{eq: PDE for R v tau minus R v 0}
\begin{cases}
\Div(A^p_{v_0}\nabla (R_{v_\tau}-R_{v_0}))=\Div((A^p_{v_0}-A^p_{v_\tau})\nabla R_{v_\tau}  )+\Div f_\tau& \text{ in }\Omega,\\
 R_{v_\tau}-R_{v_0}=0&\text{ on }\partial\Omega,
\end{cases}
\end{equation}
where
\begin{equation}
\label{eq: def of f tau}
    f_\tau\vcentcolon = a (|\nabla v_0|^{q-2}\nabla v_0-|\nabla v_\tau|^{q-2}\nabla v_\tau)\in C^{0,\gamma}(\overline{\Omega}).
\end{equation}
Next, we claim that there holds 
\begin{equation}
\label{eq: Holder estimate R v tau}
    \sup_{|\tau|\leq\tau_0}\|\nabla R_{v_\tau}\|_{C^{0,\gamma}(\overline{\Omega})}<\infty
\end{equation}
for some possibly smaller $\tau_0>0$. First, we observe that \eqref{eq: PDE for R0}, Claim \ref{claim: properties of A eps V} applied to $A^p_{v_\tau}$ and \cite[Theorem 8.33]{GiTru} imply
\begin{equation}
\label{eq: preliminary holder estimate for R v tau}
    \|R_{v_\tau}\|_{C^{1,\gamma}(\overline{\Omega})}\leq C(\|R_{v_\tau}\|_{L^{\infty}(\Omega)}+\|a|\nabla v_\tau|^{q-2}\nabla v_\tau\|_{C^{0,\gamma}(\overline{\Omega})}),
\end{equation}
where $C>0$ is independent of $\tau$. From the proof of Lemma \ref{lemma: family of p harmonic functions}, we know that $v_\tau=v_0+\tau V_\tau$ and $V_\tau\in C^{1,\gamma}(\overline{\Omega})$ satisfies the estimate
\begin{equation}
    \|V_\tau\|_{C^{1,\gamma}(\overline{\Omega})}\leq C\|\Phi\|_{C^{1,\beta}(\overline{\Omega})},
\end{equation}
where $C>0$ is independent of $\tau$ and $\Phi\in C^{\infty}(\overline{\Omega})$ is an extension of $\phi\in C^{\infty}(\partial\Omega)$ (see~\eqref{eq: preliminary Holder estimate V tau} and \eqref{eq: L infinity estimate V tau}). This ensures the bound
\begin{equation}
\label{eq: holder estimate for v tau}
    \|v_\tau\|_{C^{1,\gamma}(\overline{\Omega})}\leq C(\|v_0\|_{C^{1,\gamma}(\overline{\Omega})}+\|\Phi\|_{C^{1,\beta}(\overline{\Omega})})
\end{equation}
uniformly in $\tau$. Furthermore, \cite[Theorem 8.16]{GiTru} and \eqref{eq: PDE for R0} (for $R_{v_\tau}$) demonstrate that
\begin{equation}
\label{eq: L infinity estimate for R v tau}
    \|R_{v_\tau}\|_{L^{\infty}(\Omega)}\leq C\|a|\nabla v_\tau|^{q-2}\nabla v_\tau\|_{L^{\infty}(\Omega)}.
\end{equation}
Combining this with \eqref{eq: preliminary holder estimate for R v tau}, we get
\begin{equation}
\label{eq: preliminary holder estimate 2 for R v tau}
    \|R_{v_\tau}\|_{C^{1,\gamma}(\overline{\Omega})}\leq C\|a|\nabla v_\tau|^{q-2}\nabla v_\tau\|_{C^{0,\gamma}(\overline{\Omega})}
\end{equation}
uniformly in $\tau$. By the product rule \eqref{eq: product rule Holder}, the uniform nondegeneracy of $\nabla v_\tau$, \eqref{eq: holder estimate for v tau} and  Lemma \ref{auxiliary lemma 2}, we can uniformly bound the right hand side of \eqref{eq: preliminary holder estimate 2 for R v tau}, which yields \eqref{eq: Holder estimate R v tau}.

We next assert that
\begin{equation}
\label{eq: C 1 delta convergence of R v tau}
    R_{v_\tau}\to R_{v_0}\text{ in } C^{1,\delta}(\overline{\Omega})
\end{equation}
for some $0<\delta<\gamma$. In fact, from \eqref{eq: Holder estimate R v tau} and a compactness argument we can conclude that $R_{v_\tau}\to\widetilde{R}_{v_0}$ in $C^{1,\delta}(\overline{\Omega})$ along a subsequence for some $0<\delta<\gamma$. Using \eqref{eq: v tau expansion} and passing to the limit in the PDE for $R_{v_\tau}$, we see that $R_{v_0}$ and $\widetilde{R}_{v_0}$ solve the same Dirichlet problem, and hence $R_{v_0}=\widetilde{R}_{v_0}$. That the coefficient $A^p_{v_\tau}$ in the PDE for $R_{v_\tau}$ converges to $A^p_{v_0}$ can be shown in a similar way as done at the end of the proof of Lemma \ref{lemma: family of p harmonic functions}. Since the limit is independent of the chosen subsequence, we infer the convergence \eqref{eq: C 1 delta convergence of R v tau}. 

Finally, we show that
\begin{equation}
\label{eq: R dot equal derivative}
    \dot R=\lim_{\tau\to 0}\frac{R_{v_\tau}-R_{v_0}}{\tau}
\end{equation}
in $C^{1,\kappa}(\overline{\Omega})$ for some $0<\kappa<\delta$, where $\dot{R}\in C^{1,\gamma}(\overline{\Omega})$ is the unique solution of \eqref{eq: PDE for R dot}. From \eqref{eq: PDE for R v tau minus R v 0}, we see that
\begin{equation}
\label{eq: PDE difference quotient}
    \begin{cases}
\Div\left(A^p_{v_0}\nabla \frac{R_{v_\tau}-R_{v_0}}{\tau}\right)=\Div\left(\frac{A^p_{v_0}-A^p_{v_\tau}}{\tau}\nabla R_{v_\tau}  \right)+\Div \frac{f_\tau}{\tau}& \text{ in }\Omega,\\
 \frac{R_{v_\tau}-R_{v_0}}{\tau}=0&\text{ on }\partial\Omega,
\end{cases}
\end{equation}
for any $\tau\neq 0$. From \eqref{eq: A dot} and \eqref{eq: C 1 delta convergence of R v tau}, we deduce that
\begin{equation}
\label{eq: boundedness of first term PDE for diff quotient}
\sup_{|\tau|\leq\tau_0}\max\left(\left\|\frac{A^p_{v_\tau}-A^p_{v_0}}{\tau}\right\|_{C^{0,\gamma}(\overline{\Omega})}, \|R_{v_{\tau}}\|_{C^{1,\delta}(\overline{\Omega})}\right)<\infty
\end{equation}
for some small $\tau_0>0$. On the other hand, the formulas \eqref{Taylor}, \eqref{eq: def of f tau} and \eqref{eq: expansion v tau final proof} ensure that there holds
\[
    f_\tau=-\tau a\left(\int_0^1\nabla_\xi J^q(\nabla v_0+t\tau \nabla V_\tau)\,dt\right)\nabla V_\tau.
\]
So, from \eqref{eq: convergence of V tau} and the product rule in H\"older spaces, we see that
\begin{equation}
\label{eq: convergence of second term PDE diff quotient}
    f_\tau/\tau\to -a\, A^q_{v_0}\nabla V
\end{equation}
in $C^{0,\gamma}(\overline{\Omega})$ as $\tau\to 0$, which demonstrates that there holds
\begin{equation}
\label{eq: boundedness of second term PDE diff quotient}
    \left\|f_\tau/\tau\right\|_{C^{0,\gamma}(\overline{\Omega})}<\infty
\end{equation}
for $|\tau|\leq \tau_0$ for some small $\tau_0>0$. More concretely, to obtain the convergence \eqref{eq: convergence of second term PDE diff quotient} we make use of the fact that
\begin{equation}
\label{eq: Holder convergence of nabla J}
    \|\nabla_\xi J^q(\nabla v_0+t\tau\nabla V_\tau)-\nabla_\xi J^q(\nabla v_0)\|_{C^{0,\gamma}(\overline{\Omega})}\to 0
\end{equation}
as $\tau\to 0$, uniformly for $0\leq t\leq 1$. This in turn can be shown by using \eqref{eq: FTC first der}, \eqref{eq: second order derivatives J} and Lemma \ref{auxiliary lemma 2}. For similar arguments, we refer the reader to the proof of Claim \ref{claim: properties of A eps V} and Lemma \ref{lemma: family of p harmonic functions}. Now, using \eqref{eq: product rule Holder}, \eqref{eq: boundedness of first term PDE for diff quotient}, \eqref{eq: boundedness of second term PDE diff quotient} and \cite[Theorem 8.16 \& 8.33]{GiTru}, we see that 
\[
    \left\|\frac{R_{v_\tau}-R_{v_0}}{\tau}\right\|_{C^{1,\delta}(\overline{\Omega})}<\infty
\]
uniformly in $\tau$. Hence, there exists $\dot{\mathcal{R}}\in C^{1,\delta}(\overline{\Omega})$ such that 
\begin{equation}
\label{eq: conv diff quotient R v tau}
    \frac{R_{v_\tau}-R_{v_0}}{\tau}\to \dot{\mathcal{R}}
\end{equation}
in $C^{1,\kappa}(\overline{\Omega})$ for some $0<\kappa<\delta$. By the convergence results \eqref{eq: A dot}, \eqref{eq: convergence of second term PDE diff quotient} and \eqref{eq: conv diff quotient R v tau}, we see that passing to the limit $\tau\to 0$ in \eqref{eq: PDE difference quotient} reveals that $\dot{\mathcal{R}}\in C^{1,\delta}(\overline{\Omega})$ solves \eqref{eq: PDE for R dot}. Thus, we may conclude that $\dot{\mathcal{R}}=\dot{R}$, which in turn implies the convergence \eqref{eq: R dot equal derivative}.

Finally, we observe that \eqref{eq: expansion v tau final proof}, \eqref{Taylor}, $V_\tau\to V$ in $C^{1,\gamma}(\overline{\Omega})$ and \eqref{eq: Holder convergence of nabla J} demonstrate that there holds
\begin{equation}
\label{eq: derivative second term in I}
\begin{split}
&\left.\frac{d}{d\tau}\right|_{\tau=0}\int_{\Omega}a|\nabla v_\tau|^{q-2}\nabla v_\tau\cdot\nabla \omega\,dx\\
&=\lim_{\tau\to 0}\int_\Omega a\left(\int_0^1\nabla_\xi J^q(\nabla v_0+t\tau\nabla V_\tau)dt\right)\,\nabla V_\tau\cdot\nabla \omega\,dx\\
&=\lim_{\tau\to 0}\int_\Omega a\left(\int_0^1[\nabla_\xi J^q(\nabla v_0+t\tau\nabla V_\tau)-\nabla_\xi J^q(\nabla v_0)]\,dt\right)\,\nabla V_\tau\cdot\nabla \omega\,dx\\
&\quad +\lim_{\tau\to 0}\int_\Omega a\nabla_\xi J^q(\nabla v_0)\nabla V_\tau\cdot\nabla \omega\,dx\\
&=\int_{\Omega}a \,A_{v_0}^q\nabla V\cdot \nabla\omega\,dx.
\end{split}
\end{equation}

It follows from \eqref{eq: def J}, \eqref{eq: def I} with $v=v_\tau$, \eqref{eq: A dot}, \eqref{eq: C 1 delta convergence of R v tau}, \eqref{eq: R dot equal derivative} and  \eqref{eq: derivative second term in I} that we have
\begin{equation}
\label{eq: explicit expression of J}
\begin{split}
J(v_0,V,\omega)&=\lim_{\tau\to 0}\int_\Omega \frac{A^p_{v_\tau}\nabla R_{v_\tau}-A^p_{v_0}\nabla R_{v_0}}{\tau}\cdot\nabla \omega\,dx\\
&\quad +\left.\frac{d}{d\tau}\right|_{\tau=0}\int_{\Omega}a|\nabla v_\tau|^{q-2}\nabla v_\tau\cdot\nabla \omega\,dx\\
&=\lim_{\tau\to 0}\int_\Omega \left(\frac{A^p_{v_\tau}-A^p_{v_0}}{\tau}\nabla R_{v_\tau}+A^p_{v_0}\frac{\nabla R_{v_\tau}-\nabla R_{v_0}}{\tau}\right)\cdot\nabla \omega\,dx\\
&\quad +\int_{\Omega}a \,A_{v_0}^q\nabla V\cdot \nabla\omega\,dx\\
&= \int_{\Omega}\left(\dot A_{v_0}^p(V)\nabla R_{v_0} +A_{v_0}^p\nabla\dot R+a A^q_{v_0}\nabla V \right)\cdot
\nabla\omega\,dx.
\end{split}
\end{equation}

Suppose that $V_1, V_2\in C^2(\overline{\Omega})$ are two solutions of the equation $\Div(A_{v_0}^p\nabla V)=0$ with boundary data $\phi_1,\phi_2\in C^{\infty}(\partial\Omega)$ (see~\cite[Theorem 6.15]{GiTru}). Using Lemma \ref{lemma: family of p harmonic functions}, the symmetry of the matrices $A^p_{v_0}, \dot{A}^p_{v_0}(V_1)\in C^1(\overline{\Omega})$ (the regularity assertions follows from \eqref{eq: derivatives of Jj}, \eqref{eq: second derivative of J}, \eqref{eq: A dot}, \eqref{eq: derivative of mod power s}, the nondegeneracy of $\nabla v_0$ and $v_0,V_1\in C^2(\overline{\Omega})$) and the fact that $R,\dot{R}\in C^{1,\kappa}(\overline{\Omega})$ satisfy $R_{v_0}|_{\partial\Omega}=\dot{R}|_{\partial\Omega}=0$, we get by an integration by parts in the first two terms of \eqref{eq: explicit expression of J} the identity
\begin{equation}
\label{eq: J for two different solutions}
\begin{split}
    &J(v_0,V_1,V_2)=\int_{\Omega}\left(\dot A_{v_0}^p(V_1)\nabla R_{v_0} +A_{v_0}^p\nabla\dot R+a A^q_{v_0}\nabla V_1 \right)\cdot
\nabla V_2\,dx\\
    &\quad =-\int_{\Omega}[R_{v_0}\Div(\dot A_{v_0}^p(V_1)\nabla V_2)+\dot{R}\Div(A^p_{v_0}\nabla V_2)]\,dx+\int_{\Omega}a\,A^q_{v_0}\nabla V_1\cdot\nabla V_2\,dx\\ 
    &\quad =\int_{\Omega}\left(a \, A_{v_0}^q\nabla V_1\cdot\nabla V_2 -R_{v_0}\Div(\dot A_{v_0}^p(V_1)\nabla V_2)   \right)\,dx.
\end{split}
\end{equation}
Up to now we have assumed that all functions are real-valued, but in the next final step we want to choose $V_j$, $j=1,2$, complex-valued while keeping $v_0$ real-valued. For this let us note that $V\mapsto \dot{A}^p_{v_0}(V)$ is linear and hence if we decompose the possibly complex-valued solutions $V_j$ as $V_j=U_j+i W_j$, $j=1,2$, then we have
\[
    \begin{split}
        &\int_{\Omega}\left(a \, A_{v_0}^q\nabla V_1\cdot\nabla V_2 -R_{v_0}\Div(\dot A_{v_0}^p(V_1)\nabla V_2)   \right)\,dx\\
        &=\left(J(v_0,U_1,U_2)-J(v_0,W_1,W_2)\right)+i\left(J(v_0,U_1,W_2)+J(v_0,W_1,U_2)\right).
    \end{split}
\]
As all terms in the last line are determined by the DN map $\langle\Lambda_a f,g\rangle$ for appropriate real-valued boundary conditions $f$ and $g$, the same remains true for the integral
\[
    \int_{\Omega}\left(a \, A_{v_0}^q\nabla V_1\cdot\nabla V_2 -R_{v_0}\Div(\dot A_{v_0}^p(V_1)\nabla V_2)   \right)\,dx,
\]
which we still denote by $J(v_0,V_1,V_2)$. This notation is justified by the fact that if $V_j$, $j=1,2$, solve \eqref{eq-V}, then the same holds for the real and imaginary parts as $v_0$ is real-valued.

Let $z\in\R^n$, $|z|=1$ and set $v_0=z\cdot x$. Then using $A^r_{v_0}=\nabla_\xi J^r(\nabla v_0)$, \eqref{eq: derivatives of Jj} and \eqref{eq: A dot}, we deduce
\begin{equation}
\label{eq: A p z}
A^p_{v_0}=\mathbf{1}+(p-2)z\otimes z, \quad A^q_{v_0}=\mathbf{1}+(q-2)z\otimes z,
\end{equation}
and
\begin{equation}
\label{eq: A dot z}
\dot A^p_{v_0}(V_1)=(p-2)\Big[(z\cdot\nabla V_1)\mathbf{1}+(p-4)(z\cdot\nabla V_1) z\otimes z +z\otimes\nabla V_1+\nabla V_1\otimes z  \Big].
\end{equation}

 Let $\xi\in\R^n\setminus\{0\}$ be such that $\xi\perp z$. For $s>0$ to be chosen later, we introduce the complex vectors
\begin{equation}
\zeta_\pm=\pm s z+i\xi.
\end{equation}
By \eqref{eq: A p z}, $\zeta_{\pm}\cdot\zeta_{\pm}=s^2-|\xi|^2$, $z\cdot \zeta_{\pm}=\pm s$ and 
\begin{equation}
\label{eq: tensor times vector}
    (a\otimes b) c=a(b\cdot c)
\end{equation}
for all $a,b,c\in\C^n$, we have
\begin{equation}
\zeta_\pm\cdot A_{v_0}^p\zeta_\pm=(p-1)s^2-|\xi|^2.
\end{equation}
So, if we choose 
\begin{equation}
\label{eq: choice of s}
    s=(p-1)^{-\frac{1}{2}}|\xi|,
\end{equation}
then the functions
\begin{equation}
V_1(x)=e^{\zeta_+\cdot x},\quad V_2(x)=e^{\zeta_-\cdot x},
\end{equation}
are both global smooth solutions of $\Div(A_{v_0}^p\nabla V)=0$. 

Using \eqref{eq: A dot z}, \eqref{eq: tensor times vector}, $\zeta_+\cdot \zeta_{-}=-(s^2+|\xi|^2)$ and \eqref{eq: choice of s}, we get
\begin{equation}
\begin{split}
\dot A_{v_0}^p(V_1)\nabla V_2
&=(p-2)e^{2i\xi\cdot x}\left(s\mathbf{1}+s(p-4)z\otimes z+z\otimes\zeta_++\zeta_+\otimes z  \right)\zeta_-\\
&=(p-2)e^{2i\xi\cdot x}\left(s\zeta_--s^2(p-4)z-(s^2+|\xi|^2)z-s\zeta_+   \right)\\
&=-(p-2)e^{2i\xi\cdot x}[(p-1)s^2+|\xi|^2]z=-2(p-2)|\xi|^2 e^{2i\xi\cdot x}z
\end{split}
\end{equation}
and thus it follows from $\xi\perp z$ that
\begin{equation}
\label{eq: dot PDE is zero}
\Div(\dot A_{v_0}^p(V_1)\nabla V_2)=0.
\end{equation}
On the other hand, by \eqref{eq: tensor times vector}, \eqref{eq: choice of s}, $\zeta_{\pm}\cdot\zeta_{\pm}=s^2-|\xi|^2$, $z\cdot \zeta_{\pm}=\pm s$ and $\zeta_+\cdot \zeta_{-}=-(s^2+|\xi|^2)$ we obtain
\begin{equation}
\label{eq: gradient mixed terms}
\begin{split}
    A^q_{v_0}\nabla V_1\cdot \nabla V_2&= e^{2i\xi\cdot x} A^q_{v_0}\zeta_+\cdot \zeta_-\\
    &=-e^{2i\xi\cdot x}((q-1)s^2+|\xi|^2)\\
    &=-\frac{p+q-2}{p-1}|\xi|^2e^{2i\xi\cdot x}.
\end{split}
\end{equation}
Therefore, by using \eqref{eq: J for two different solutions}, \eqref{eq: dot PDE is zero} and \eqref{eq: gradient mixed terms}, we deduce that
\begin{equation}
\begin{split}
    J(z\cdot x, e^{\zeta_+\cdot x},e^{\zeta_-\cdot x})&=-\frac{p+q-2}{4(p-1)}\int_{\Omega}a(x)|2\xi|^2e^{2i\xi\cdot x}\,dx.
\end{split}
\end{equation}
Hence, if we set $a$ outside of $\overline{\Omega}$ equal to zero, then we get
\begin{equation}
\label{eq: fourier of a}
    \hat{a}(\xi)=-\frac{4(p-1)}{p+q-2}\frac{J(z\cdot x, e^{\widetilde{\zeta}_+\cdot x},e^{\widetilde{\zeta}_-\cdot x})}{|\xi|^2}
\end{equation}
for all $\xi\perp z$, where 
\[
    \widetilde{\zeta}_{\pm}=\pm \frac{|\xi|}{2(p-1)^{1/2}} z+i\frac{\xi}{2}.
\]
As formula \eqref{eq: fourier of a} holds for all $z\in \mathbb{S}^{n-1}$ and $\xi\neq 0$ with $\xi\perp z$, we can conclude that $\hat{a}(\xi)$ is determined by $J$ for all $\xi\in\R^n$. Therefore, by Fourier's inversion theorem we have shown that $a$ can be recovered from the DN map $\Lambda_a$ (see~\eqref{eq: def I}, \eqref{eq: def J} and \eqref{eq: fourier of a}).

This concludes the proof.
\end{proof}

\subsection{Reconstruction of \texorpdfstring{$a$}{a} in the range \texorpdfstring{$p>q$}{p bigger q}}
\label{subsec: Inverse double phase problem for p bigger q}

In this section, we explain the necessary changes of Section \ref{subsec: Inverse double phase problem for p smaller q} to reconstruct the coefficient $a$ in the double phase problem \eqref{eq: PDE double phase} when $p>q$.

\begin{proof}[Proof of Theorem \ref{thm: main theorem double phase} for $p>q$]
    In a similar manner as in the proof of Theorem \ref{thm: main theorem double phase}, we consider the unique solution $u_\mu\in W^{1,p}(\Omega)$ to \eqref{eq: mu PDE} for some $p$-harmonic function $v\in C^{\infty}(\overline{\Omega})$ without critical points and large $\mu\gg 1$.

    First note that with the help of Proposition \ref{prop: asymptotic expansion p>q} we can repeat the argument of the proof of Theorem \ref{thm: main theorem double phase} to conclude that there holds
    \begin{equation}
    \label{eq: def I p>q}
        \begin{split}
            I(v,g)&\vcentcolon =\lim_{\mu\to\infty}\mu^{1-q}\left( \langle \Lambda_a\mu v, g\rangle- \langle\Lambda_0\mu v,g\rangle \right)\\
            &=\int_{\Omega} \left(A^p_v\nabla R_v +a|\nabla v|^{q-2}\nabla v\right)\cdot\nabla \omega\,dx.
        \end{split}
    \end{equation}
    Also the rest of the proof does not depend on the ordering of the exponents $p,q$ and hence we get the desired result by using the very same reasoning.
\end{proof}

	\medskip 
	
	\noindent\textbf{Acknowledgments.} 
C.~I.~C\^{a}rstea  was supported by the National Science and Technology Council (NSTC) grant number 113-2115-M-A49-018-MY3. P.~Zimmermann is supported by the Swiss National Science Foundation (SNSF), under the grant number 214500.

	\bibliography{refs} 

\newcommand{\etalchar}[1]{$^{#1}$}
\begin{thebibliography}{CFK{\etalchar{+}}21}

\bibitem[BCM18]{Baroni-Colombo-Mingione-regularity-double-phase}
P.~Baroni, M.~Colombo, and G.~Mingione.
\newblock Regularity for general functionals with double phase.
\newblock {\em Calculus of Variations and Partial Differential Equations},
  57:1--48, 2018.

\bibitem[BHKS18]{BrHaKaSa}
T.~Brander, B.~Harrach, M.~Kar, and M.~Salo.
\newblock Monotonicity and enclosure methods for the $p$-{L}aplace equation.
\newblock {\em SIAM Journal on Applied Mathematics}, 78(2):742--758, 2018.

\bibitem[BIK18]{BrIlKa}
T.~Brander, J.~Ilmavirta, and M.~Kar.
\newblock Superconductive and insulating inclusions for linear and non-linear
  conductivity equations.
\newblock {\em Inverse Problems \& Imaging}, 12(1):91--123, 2018.

\bibitem[BKS15]{BrKaSa}
T.~Brander, M.~Kar, and M.~Salo.
\newblock Enclosure method for the $p$-{L}aplace equation.
\newblock {\em Inverse Problems}, 31(4):045001, 2015.

\bibitem[Bra16]{Br}
T.~Brander.
\newblock Calder{\'o}n problem for the $p$-{L}aplacian: {F}irst order
  derivative of conductivity on the boundary.
\newblock {\em Proceedings of the American Mathematical Society},
  144(1):177--189, 2016.

\bibitem[C{\^a}r22]{Car}
C.~I. C{\^a}rstea.
\newblock On an inverse boundary value problem for a nonlinear time harmonic
  {M}axwell system.
\newblock {\em Journal of Inverse and Ill-posed Problems}, 30(3):395--408,
  2022.

\bibitem[C{\^a}r24]{Car2}
C.~I. C{\^a}rstea.
\newblock Prescribed nonlinearity helps in an anisotropic {C}alder\'{o}n-type
  problem.
\newblock {\em arXiv preprint arXiv:2406.14970}, 2024.

\bibitem[CF21]{CarFe1}
C.~I. C{\^a}rstea and A.~Feizmohammadi.
\newblock An inverse boundary value problem for certain anisotropic quasilinear
  elliptic equations.
\newblock {\em Journal of Differential Equations}, 284:318--349, 2021.

\bibitem[CF23]{CarFe2}
C.~I. C{\^a}rstea and A.~Feizmohammadi.
\newblock A density property for tensor products of gradients of harmonic
  functions and applications.
\newblock {\em Journal of Functional Analysis}, 284(2):109740, 2023.

\bibitem[CF24]{CarFe3}
C.~I. C{\^a}rstea and A.~Feizmohammadi.
\newblock Two uniqueness results in the inverse boundary value problem for the
  weighted $p$-{L}aplace equation.
\newblock {\em arXiv preprint arXiv:2405.04123}, 2024.

\bibitem[CFK{\etalchar{+}}21]{CarFeKiKrUh}
C.~I. C{\^a}rstea, A.~Feizmohammadi, Y.~Kian, K.~Krupchyk, and G.~Uhlmann.
\newblock The {C}alder\'{o}n inverse problem for isotropic quasilinear
  conductivities.
\newblock {\em Advances in Mathematics}, 391:107956, 2021.

\bibitem[CGN25]{CarGhNa}
C.~I. C{\^a}rstea, T.~Ghosh, and G.~Nakamura.
\newblock An inverse boundary value problem for the inhomogeneous porous medium
  equation.
\newblock {\em SIAM Journal on Applied Mathematics}, 85(1):278--293, 2025.

\bibitem[CGU23]{CarGhUh}
C.~I. C{\^a}rstea, T.~Ghosh, and G.~Uhlmann.
\newblock An inverse problem for the porous medium equation with partial data
  and a possibly singular absorption term.
\newblock {\em SIAM Journal on Mathematical Analysis}, 55(1):162--185, 2023.

\bibitem[CK20]{CarKa}
C.~I. C{\^a}rstea and M.~Kar.
\newblock Recovery of coefficients for a weighted $p$-{L}aplacian perturbed by
  a linear second order term.
\newblock {\em Inverse Problems}, 37(1):015013, 2020.

\bibitem[Cla36]{clarkson1936uniformly}
J.~A. Clarkson.
\newblock Uniformly convex spaces.
\newblock {\em Transactions of the American Mathematical Society},
  40(3):396--414, 1936.

\bibitem[CM15]{Colombo-Mingione-bounded-minimizers-double-phase}
M.~Colombo and G.~Mingione.
\newblock Bounded minimisers of double phase variational integrals.
\newblock {\em Arch. Ration. Mech. Anal}, 218(1):219--273, 2015.

\bibitem[CNV19]{CarNaVa}
C.~I. C{\^a}rstea, G.~Nakamura, and M.~Vashisth.
\newblock Reconstruction for the coefficients of a quasilinear elliptic partial
  differential equation.
\newblock {\em Applied Mathematics Letters}, 2019.

\bibitem[Dac07]{dacorogna2007direct}
B.~Dacorogna.
\newblock {\em Direct methods in the calculus of variations}, volume~78.
\newblock Springer Science \& Business Media, 2007.

\bibitem[DF21]{Filippis-Multiphase-optimal}
C.~De~Filippis.
\newblock Optimal gradient estimates for multi-phase integrals.
\newblock {\em arXiv preprint arXiv:2107.04898}, 2021.

\bibitem[DFO19]{Filippis-Multiphase}
C.~De~Filippis and J.~Oh.
\newblock Regularity for multi-phase variational problems.
\newblock {\em Journal of Differential Equations}, 267(3):1631--1670, 2019.

\bibitem[EPS14]{EgPiSc}
H.~Egger, J.-F. Pietschmann, and M.~Schlottbom.
\newblock Simultaneous identification of diffusion and absorption coefficients
  in a quasilinear elliptic problem.
\newblock {\em Inverse Problems}, 30(3):035009, 2014.

\bibitem[FO20]{FeOk}
A.~Feizmohammadi and L.~Oksanen.
\newblock An inverse problem for a semi-linear elliptic equation in
  {R}iemannian geometries.
\newblock {\em Journal of Differential Equations}, 269(6):4683--4719, 2020.

\bibitem[FRRO23]{Elliptic-PDEs-Xavi}
X.~Fern{\'a}ndez-Real and X.~Ros-Oton.
\newblock Regularity theory for elliptic pde.
\newblock {\em arXiv preprint arXiv:2301.01564}, 2023.

\bibitem[GKS16]{GuKaSa}
C.-Y. Guo, M.~Kar, and M.~Salo.
\newblock Inverse problems for $p$-{L}aplace type equations under monotonicity
  assumptions.
\newblock {\em Rend. Istit. Mat. Univ. Trieste}, 48:79--99, 2016.

\bibitem[GM75]{Estimate_p_Laplacian}
R.~Glowinski and A.~Marroco.
\newblock Sur l'approximation, par \'{e}l\'{e}ments finis d'ordre un, et la
  r\'{e}solution, par p\'{e}nalisation-dualit\'{e}, d'une classe de probl\`emes
  de {D}irichlet non lin\'{e}aires.
\newblock {\em Rev. Fran\c{c}aise Automat. Informat. Recherche
  Op\'{e}rationnelle S\'{e}r. Rouge Anal. Num\'{e}r.}, 9({\rm R}-2):41--76,
  1975.

\bibitem[GT98]{GiTru}
D.~Gilbarg and N.~S. Trudinger.
\newblock {\em Elliptic Partial Differential Equations of Second Order}, volume
  224.
\newblock Springer, 1998.

\bibitem[HS02]{HeSun}
D.~Hervas and Z.~Sun.
\newblock An inverse boundary value problem for quasilinear elliptic equations.
\newblock {\em Communications in Partial Differential Equations},
  27(11-12):2449--2490, 2002.

\bibitem[IN95]{IsNa}
V.~Isakov and A.~I. Nachman.
\newblock Global uniqueness for a two-dimensional semilinear elliptic inverse
  problem.
\newblock {\em Transactions of the American Mathematical Society},
  347(9):3375--3390, 1995.

\bibitem[IS94]{IsSy}
V.~Isakov and J.~Sylvester.
\newblock Global uniqueness for a semilinear elliptic inverse problem.
\newblock {\em Communications on Pure and Applied Mathematics},
  47(10):1403--1410, 1994.

\bibitem[Isa93]{Is1}
V.~Isakov.
\newblock On uniqueness in inverse problems for semilinear parabolic equations.
\newblock {\em Archive for Rational Mechanics and Analysis}, 124(1):1--12,
  1993.

\bibitem[Isa01]{Is2}
V.~Isakov.
\newblock Uniqueness of recovery of some quasilinear partial differential
  equations.
\newblock {\em Communications in Partial Differential Equations},
  26(11-12):1947--1973, 2001.

\bibitem[KLZ24]{Frac-p-Lap}
M.~Kar, Y.-H. Lin, and P.~Zimmermann.
\newblock Determining coefficients for a fractional $p$-{L}aplace equation from
  exterior measurements.
\newblock {\em Journal of Differential Equations}, 406:338--365, 2024.

\bibitem[KN02]{KaNa}
H.~Kang and G.~Nakamura.
\newblock Identification of nonlinearity in a conductivity equation via the
  {D}irichlet-to-{N}eumann map.
\newblock {\em Inverse Problems}, 18(4):1079, 2002.

\bibitem[KRZ23]{Frac-p-biharm}
M.~Kar, J.~Railo, and P.~Zimmermann.
\newblock The fractional {$p\,$}-biharmonic systems: optimal {P}oincar\'{e}
  constants, unique continuation and inverse problems.
\newblock {\em Calc. Var. Partial Differential Equations}, 62(4):Paper No. 130,
  36, 2023.

\bibitem[KU20a]{KrUh}
K.~Krupchyk and G.~Uhlmann.
\newblock Partial data inverse problems for semilinear elliptic equations with
  gradient nonlinearities.
\newblock {\em Mathematical Research Letters}, 27(6), 2020.

\bibitem[KU20b]{KrUh2}
K.~Krupchyk and G.~Uhlmann.
\newblock A remark on partial data inverse problems for semilinear elliptic
  equations.
\newblock {\em Proceedings of the American Mathematical Society},
  148(2):681--685, 2020.

\bibitem[KW17]{KaWa}
M.~Kar and J.-N. Wang.
\newblock Size estimates for the weighted $p$-{L}aplace equation with one
  measurement.
\newblock {\em Discrete \& Continuous Dynamical Systems-B}, 22(11):0, 2017.

\bibitem[Leo91]{leonetti1991maximum}
F.~Leonetti.
\newblock Maximum principle for vector-valued minimizers of some integral
  functionals.
\newblock {\em BOLLETTINO DELLA UNIONE MATEMATICA ITALIANA}, 5(1):51--56, 1991.

\bibitem[Lie88]{Lieberman1988}
G.~M. Lieberman.
\newblock Boundary regularity for solutions of degenerate elliptic equations.
\newblock {\em Nonlinear Analysis: Theory, Methods \& Applications},
  12(11):1203--1219, 1988.

\bibitem[LLLS20]{LaLiiLinSa2}
M.~Lassas, T.~Liimatainen, Y.-H. Lin, and M.~Salo.
\newblock Partial data inverse problems and simultaneous recovery of boundary
  and coefficients for semilinear elliptic equations.
\newblock {\em Revista Matem{\'a}tica Iberoamericana}, 37(4):1553--1580, 2020.

\bibitem[LLLS21]{LaLiiLinSa1}
M.~Lassas, T.~Liimatainen, Y.-H. Lin, and M.~Salo.
\newblock Inverse problems for elliptic equations with power type
  nonlinearities.
\newblock {\em Journal de Math{\'e}matiques Pures et Appliqu{\'e}es},
  145:44--82, 2021.

\bibitem[LZ23]{Frac-porous-medium}
Y.-H. Lin and P.~Zimmermann.
\newblock Unique determination of coefficients and kernel in nonlocal porous
  medium equations with absorption term.
\newblock {\em arXiv preprint arXiv:2305.16282}, 2023.

\bibitem[MU20]{MuUh}
C.~Munoz and G.~Uhlmann.
\newblock The {C}alder\'{o}n problem for quasilinear elliptic equations.
\newblock {\em Annales de l'Institut Henri Poincar{\'e} C, Analyse non
  lin{\'e}aire}, 2020.

\bibitem[Sha20]{Sh}
R.~Shankar.
\newblock Recovering a quasilinear conductivity from boundary measurements.
\newblock {\em Inverse Problems}, 37(1):015014, 2020.

\bibitem[Sim78]{Simon}
J.~Simon.
\newblock R\'{e}gularit\'{e} de la solution d'une \'{e}quation non lin\'{e}aire
  dans {${\bf R}^{N}$}.
\newblock In {\em Journ\'{e}es d'{A}nalyse {N}on {L}in\'{e}aire ({P}roc.
  {C}onf., {B}esan\c{c}on, 1977)}, volume 665 of {\em Lecture Notes in Math.},
  pages 205--227. Springer, Berlin, 1978.

\bibitem[SU97]{SunUh}
Z.~Sun and G.~Uhlmann.
\newblock Inverse problems in quasilinear anisotropic media.
\newblock {\em American Journal of Mathematics}, 119(4):771--797, 1997.

\bibitem[Sun96]{Sun1}
Z.~Sun.
\newblock On a quasilinear inverse boundary value problem.
\newblock {\em Mathematische Zeitschrift}, 221(1):293--305, 1996.

\bibitem[Sun05]{Sun3}
Z.~Sun.
\newblock Anisotropic inverse problems for quasilinear elliptic equations.
\newblock In {\em Journal of Physics: Conference Series}, volume~12, page 156.
  IOP Publishing, 2005.

\bibitem[Sun10]{Sun2}
Z.~Sun.
\newblock An inverse boundary-value problem for semilinear elliptic equations.
\newblock {\em Electronic Journal of Differential Equations (EJDE)[electronic
  only]}, 2010:Paper--No, 2010.

\bibitem[SZ12]{SaZh}
M.~Salo and X.~Zhong.
\newblock An inverse problem for the $p$-{L}aplacian: boundary determination.
\newblock {\em SIAM Journal on Mathematical Analysis}, 44(4):2474--2495, 2012.

\end{thebibliography}
	
	\bibliographystyle{alpha}
	
\end{document}